\newtheorem{thm}{Theorem}[section]
\newtheorem{cor}[thm]{Corollary}
\newtheorem{claim}[thm]{Claim}
\newtheorem{fact}[thm]{Fact}
\newtheorem{lemma}[thm]{Lemma}
\newtheorem{prop}[thm]{Proposition}
\theoremstyle{definition}
\newtheorem{definition}[thm]{Definition}
\newtheorem{ex}[thm]{Example}
\newtheorem{remark}[thm]{Remark}
\title{Algebraic characterisation of relatively hyperbolic special groups}
\date{\today}
\author{Anthony Genevois}
\begin{document}

\maketitle

\begin{abstract}
This article is dedicated to the characterisation of the relative hyperbolicity of Haglund and Wise's special groups. More precise, we introduce a new combinatorial formalism to study (virtually) special groups, and we prove that, given a cocompact special group $G$ and a finite collection of subgroups $\mathcal{H}$, then $G$ is hyperbolic relative to $\mathcal{H}$ if and only if (i) each subgroup of $\mathcal{H}$ is convex-cocompact, (ii) $\mathcal{H}$ is an almost malnormal collection, and (iii) every non-virtually cyclic abelian subgroup of $G$ is contained in a conjugate of some group of $\mathcal{H}$. As an application, we show that a virtually cocompact special group is hyperbolic relative to abelian subgroups if and only if it does not contain $\mathbb{F}_2 \times \mathbb{Z}$.
\end{abstract}

\tableofcontents

\section{Introduction}

CAT(0) cube complexes are now classical objects in geometric group theory. The explanation is twofold. First, the combinatorics of their hyperplanes provides a powerful tool to study their geometry, allowing us to answer many questions about the algebra and the geometry of groups acting on CAT(0) cube complexes. Next, there exist many different and interesting groups acting on such complexes, providing a large collection of potential applications of the theory. They include for instance Coxeter groups, right-angled Artin groups, one-relator groups with torsion, small cancellation groups, hyperbolic free-by-cyclic groups, and many 3-manifold groups. Consequently, given a group, it is a good strategy to try to make it act on a CAT(0) cube complex.

Nevertheless, the class of groups acting on CAT(0) cube complexes still contains groups with exotic behaviors. One of the most impressive examples is the family of simple groups constructed by Burger and Mozes \cite{BurgerMozes}, which act geometrically on products of two trees. In this article, we focus on groups acting on CAT(0) cube complexes in a very specific way, which allows us to avoid these exotic examples. Namely, we are interested in \emph{special groups}, ie., groups which are fundamental groups of \emph{special cube complexes}. Although the class of special groups is smaller than the class of groups acting geometrically on CAT(0) cube complexes, it is worth noticing that many groups of interest turn out to be (virtually) special, including Coxeter groups and many hyperbolic groups, such as small cancellation groups and hyperbolic 3-manifold groups. The initial motivation for the introduction of special groups in \cite{MR2377497} was to prove separability properties, which play a fundamental role in the proof of the virtual Haken conjecture \cite{MR3104553}. But many other properties of interest can be deduced from being special, typically all the properties which hold for right-angled Artin groups (such as being residually nilpotent, satisfying Tits' alternative or being biorderable) and which are stable under taking subgroups.

In this article, our goal is to construct a new and simple combinatorial formalism to study special groups. The main observation is that, in a special cube complex, a path (with a fixed initial vertex) is uniquely determined by the sequence of oriented hyperplanes it crosses, so that paths may be thought of as words of oriented hyperplanes. Moreover, the homotopy between paths (with fixed endpoints) coincides with the equivalence relation generated by the following elementary transformations: in a word of oriented hyperplanes, remove or add subwords of the form $JJ^{-1}$; if $J_1$ and $J_2$ are transverse, replace the subword $J_1J_2$ with $J_2J_1$. Consequently, the elements of the fundamental group of our special cube complex $X$ can be thought of as words of oriented hyperplanes submitted to the previous natural relations. Technically, we identify the fundamental groupoid of $X$ with a groupoid defined from \emph{legal} words of oriented hyperplanes. In full generality, we do not consider words of hyperplanes, but we introduce what we call a \emph{special coloring}, which colors the oriented hyperplanes of $X$, and next we consider words of colors. However, setting the color set as the set of oriented hyperplanes leads to words of oriented hyperplanes, so it is a good example to keep in mind. 

%
%

The strength of this new formalism is that it provides a description of the subcomplexes (in the universal cover) which decompose as Cartesian products (the ``non-hyperbolic subspaces''). Such a description allows us to understand the obstructions to diverse hyperbolicities. More precisely, as an application of our formalism, we state and prove algebraic criteria to several hyperbolic properties.

The existence of connections between the algebra and the geometry of a group is a central idea in geometric group theory. Among the first historical illustrations of this idea, one can mention Stallings' theorem stating that a finitely generated group has at least two ends if and only if it splits non-trivially over a finite subgroup; and Gromov's theorem stating that a finitely generated group has polynomial growth if and only if it is virtually nilpotent. In this article, the geometric properties which interest us are Gromov-hyperbolicity and relative hyperbolicity. 

\emph{Gromov-hyperbolic groups}, or \emph{hyperbolic groups} for short, were introduced in \cite{GromovHyp}. Typically, they are the groups which are, when thought of as metric spaces via their Cayley graphs, ``negatively curved''. This class includes free groups, surface groups, more generally fundamental groups of compact riemannian manifolds of negative sectional curvature, and small cancellation groups. Being hyperbolic implies severe restrictions on the algebra of a group (see \cite{GhysHarpe} for more information), but the most emblematic property of hyperbolic groups is that they cannot contain subgroups isomorphic to $\mathbb{Z}^2$. Although the converse does not hold, namely there exist non-hyperbolic groups without $\mathbb{Z}^2$ as a subgroup, it turns out to be satisfied in many cases of interest and several conjectures are dedicated to this problem. For instance, it is unknown whether CAT(0) groups are necessarily hyperbolic if they do not contain $\mathbb{Z}^2$ as a subgroup. Even in the context of CAT(0) cube complexes, this problem remains open, although partial results were found in \cite{SpecialHyp, PeriodicFlatsCC, NTY}. In particular, a positive answer was proved in \cite{SpecialHyp} for (virtually) special groups by studying Davis complexes and their convex subspaces. Thanks to our formalism, we are able to give a simpler and shorter proof of \cite[Corollary 4]{SpecialHyp}. More precisely, we prove:

\begin{thm}\label{thm:main2}
Let $X$ be a compact special cube complex. If its universal cover contain an $n$-dimensional combinatorial flat for some $n \geq 1$, then $\pi_1(X)$ contains $\mathbb{Z}^n$.
\end{thm}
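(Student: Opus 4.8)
The plan is to leverage the combinatorial formalism just developed, where paths in the universal cover $\widetilde{X}$ are encoded as legal words of oriented hyperplanes (or colors) modulo the transposition and cancellation relations. An $n$-dimensional combinatorial flat is an isometrically embedded copy of the standard cubulation of $\mathbb{R}^n$, so it contains $n$ pairwise ``orthogonal'' families of parallel hyperplanes. The strategy is to extract from this geometric flat an explicit subgroup isomorphic to $\mathbb{Z}^n$ inside $\pi_1(X)$ by producing $n$ commuting, infinite-order elements coming from translations along the $n$ axis directions of the flat.

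First I would fix the flat $F \cong \mathbb{R}^n$ in $\widetilde{X}$ and analyze the hyperplanes it crosses. For each of the $n$ coordinate directions there is an infinite family of parallel hyperplanes, and hyperplanes coming from distinct directions are transverse (they cross inside $F$). Since $X$ is compact, there are only finitely many hyperplane-types (colors), so by a pigeonhole argument along each axis I can find, for every direction $i$, two distinct parallel hyperplanes $J_i, J_i'$ that have the same color and bound a combinatorial ``slab'' whose translation is realized by a loop $g_i$ in $\pi_1(X)$; concretely, the translation of the flat that carries $J_i$ to $J_i'$ along the $i$-th axis descends to a nontrivial element $g_i \in \pi_1(X)$. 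Because this translation has unbounded orbit in $F$, the element $g_i$ has infinite order. The key point here is that the deck transformation implementing this translation must exist as a genuine element of the fundamental group, which is where compactness and the special (hence ``clean'') structure of hyperplanes is used.

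Next I would check that the elements $g_1, \dots, g_n$ pairwise commute and generate a free abelian group of rank $n$. Commutation should follow from the formalism: the word of oriented hyperplanes representing $g_i$ consists of hyperplanes transverse to every hyperplane in the word representing $g_j$ for $j \neq i$ (distinct axis directions cross), so the transposition relation $J_1 J_2 \mapsto J_2 J_1$ lets us freely reorder, giving $g_i g_j = g_j g_i$ at the level of legal words, hence in $\pi_1(X)$. To see that the subgroup $\langle g_1, \dots, g_n \rangle$ is genuinely $\mathbb{Z}^n$ rather than a proper quotient, I would argue that a nontrivial reduced word in the $g_i$ translates the basepoint a bounded-away-from-zero amount in $F$ (the translation lengths along independent axes add up and cannot cancel), so no nontrivial product is null-homotopic; equivalently, the orbit map $\mathbb{Z}^n \to F$ is a quasi-isometric embedding.

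The hard part will be the pigeonhole step that upgrades ``two parallel hyperplanes of the same color'' into a bona fide translation element of $\pi_1(X)$ stabilizing the flat's axis direction, together with verifying that the resulting elements are simultaneously commuting and independent. The subtlety is that matching colors alone gives a homotopy class of path between the two hyperplanes, but one must ensure this path closes up to a loop whose associated deck transformation actually translates along the axis (rather than, say, reflecting or shearing the flat); the combinatorial product-decomposition description of non-hyperbolic subcomplexes promised by the formalism is exactly the tool I expect to make this rigorous, by identifying $F$ with a product of $n$ combinatorial lines each carrying its own translation. I would therefore organize the proof around first reducing to the case where $F$ maps to a single product subcomplex downstairs, and then treating each line factor independently before recombining via the commutation relations.
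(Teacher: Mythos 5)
There is a genuine gap at the heart of your plan: the step where ``the translation of the flat that carries $J_i$ to $J_i'$ along the $i$-th axis descends to a nontrivial element $g_i \in \pi_1(X)$.'' No such element is available. A deck transformation \emph{is} an element of $\pi_1(X)$, and what you are asking for is an element of $\pi_1(X)$ whose action on $\widetilde{X}$ translates the flat along its $i$-th axis --- in other words, you are assuming the flat is (virtually) periodic. That is essentially the hard content of the theorem, not a tool one may invoke: in general there is no reason for any group element to stabilize the flat, and the known proofs (including the paper's) produce $\mathbb{Z}^n$ \emph{abstractly}, without making the flat invariant under anything. Your pigeonhole input is also too weak to launch such a construction: two parallel hyperplanes of $\widetilde{X}$ with the same color need not even be translates of one another under $\pi_1(X)$ (a special coloring may identify distinct hyperplanes of $X$), and even with the tautological coloring, the deck transformations carrying one lift to the other form a coset none of whose members need preserve the flat, let alone act on it as a translation. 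Your independence argument inherits the same problem, since it measures translation lengths of the $g_i$ acting on $F$. You flagged this step as the ``hard part,'' but the tool you propose (the product decomposition of non-hyperbolic subcomplexes) does not produce a stabilizing element either.

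The paper's proof sidesteps translations entirely, and the trick is worth internalizing: it pigeonholes not on colors of hyperplanes but on the \emph{termini} of the lattice points along each axis, i.e.\ their image vertices in the compact complex $X$. For each direction $i$ one finds $p_i < q_i$ such that the axis points $x^i(p_i)$ and $x^i(q_i)$ project to the same vertex of $X$; the legal word $H^i_{p_i+1} \cdots H^i_{q_i}$ read along the axis between them is then a \emph{spherical diagram}, i.e.\ a genuine loop in $X$, hence an element of the fundamental group --- no action on the flat is needed or claimed. A separate observation (Fact \ref{fact:coordinates}, via Lemma \ref{lem:labeledge}) shows every lattice point of the flat is represented by the concatenation of the axis words, so all these loops can be based at the terminus of a single point $o=(p_1,\ldots,p_n)$. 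Commutation then comes exactly as you predicted, from adjacency of colors across distinct axes (Lemma \ref{lem:transverseadjacent}), and rank $n$ follows because the $g_i$ are nontrivial words over pairwise disjoint, mutually adjacent color sets, which via the embedding into $A(\Delta)$ (Theorem \ref{thm:embedRAAG}) generate a direct product of $n$ infinite cyclic groups. So your second half (commutation and independence through the formalism) is sound in outline, but your first half needs to be replaced by the terminus-pigeonhole producing spherical diagrams rather than deck translations.
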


Since the fundamental group of a nonpositively-curved cube complex $X$ is hyperbolic if and only if the universal cover of $X$ does not contain a (combinatorial) two-dimensional flat, it follows that:

\begin{cor}\label{cor:main2}
A virtually cocompact special group is hyperbolic if and only if it does not contain $\mathbb{Z}^2$. 
\end{cor}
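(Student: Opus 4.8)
The plan is to deduce Corollary \ref{cor:main2} from Theorem \ref{thm:main2} together with two standard facts: that Gromov-hyperbolicity is a commensurability invariant, and that a cocompact nonpositively-curved cube complex has hyperbolic fundamental group precisely when its universal cover contains no combinatorial flat plane (the criterion recalled just before the statement).

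First I would reduce to the genuinely cocompact special case. Let $G$ be virtually cocompact special, so that $G$ admits a finite-index subgroup $H = \pi_1(X)$ where $X$ is a compact special cube complex. Since hyperbolicity passes both to and from finite-index subgroups, $G$ is hyperbolic if and only if $H$ is. Likewise, containing a copy of $\mathbb{Z}^2$ is stable under commensurability: if $G$ contains $\mathbb{Z}^2$, then $H \cap \mathbb{Z}^2$ is a finite-index subgroup of $\mathbb{Z}^2$, hence again isomorphic to $\mathbb{Z}^2$, so $H$ contains $\mathbb{Z}^2$; the converse is immediate. Thus it suffices to prove the equivalence for $H = \pi_1(X)$ with $X$ compact special.

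For the forward implication, I would invoke the classical fact that a hyperbolic group cannot contain $\mathbb{Z}^2$ (every abelian subgroup of a hyperbolic group is virtually cyclic). Hence if $H$ is hyperbolic it contains no $\mathbb{Z}^2$. For the converse I would argue contrapositively: suppose $H$ is not hyperbolic. As $X$ is a compact, and thus cocompact, nonpositively-curved cube complex, the combinatorial flat plane criterion produces a combinatorial two-dimensional flat in the universal cover $\widetilde{X}$. Applying Theorem \ref{thm:main2} with $n = 2$ then yields an embedding $\mathbb{Z}^2 \hookrightarrow \pi_1(X) = H$, so $H$ contains $\mathbb{Z}^2$, which establishes the converse.

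The substantive geometric content is entirely absorbed into Theorem \ref{thm:main2}, so the only points demanding care are the commensurability reductions and the correct invocation of the hyperbolicity criterion. I expect the latter to be the one place where attention is required: one must ensure that ``not hyperbolic'' genuinely produces an honest \emph{combinatorial} flat in $\widetilde{X}$, in the sense needed to feed Theorem \ref{thm:main2}, rather than merely an asymptotic or CAT(0)-geodesic flat, so that the combinatorial and metric formulations of the flat plane theorem are matched up correctly.
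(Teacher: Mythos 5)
Your proof is correct and follows essentially the same route as the paper: the forward direction is the standard absence of $\mathbb{Z}^2$ in hyperbolic groups, and the converse passes to the finite-index cocompact special subgroup, invokes the \emph{combinatorial} flat plane theorem (the paper cites \cite[Corollary 5]{CDEHV} and \cite[Theorem 3.3]{coningoff} for precisely the combinatorial statement you rightly flag as the delicate point), and feeds the resulting flat into Theorem \ref{thm:main2} with $n=2$. The only cosmetic difference is that your commensurability argument for containing $\mathbb{Z}^2$ is not needed in the paper's version, since the copy of $\mathbb{Z}^2$ produced in the finite-index subgroup already lies in $G$.
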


The first generalisation of hyperbolic groups, already suggested in \cite{GromovHyp}, is the class of \emph{relatively hyperbolic groups}. Geometrically, a group $G$ is hyperbolic relative to a finite collection of subgroups $\mathcal{H}$, called the \emph{peripheral subgroups}, if the subspaces of $G$ which are not negatively-curved are included in a conjugate of some subgroup of $\mathcal{H}$ and if the conjugates of the subgroups of $\mathcal{H}$ do not fellow-travel (loosely speaking, they do not interact). So we allow some non-negative curvature but we want a control on it. The main motivation for the introduction of relatively hyperbolic groups was to construct a general framework dealing with hyperbolic manifolds of finite volume (in this case, the peripheral subgroups are the fundamental groups of the cusps). Algebraically, the picture is essentially the following: a subgroup of a relatively hyperbolic group either has a behavior similar to hyperbolic groups or it is included in a peripheral subgroups. We refer to \cite{OsinRelativeHyp} for more details on relatively hyperbolic groups. The main criterion we obtain is the following:

\begin{thm}\label{thm:RHspecialMain}
Let $G$ be a cocompact special group and $\mathcal{H}$ a finite collection of subgroups. Then $G$ is hyperbolic relative to $\mathcal{H}$ if and only if the following conditions are satisfied:
\begin{itemize}
	\item each subgroup of $\mathcal{H}$ is convex-cocompact;
	\item $\mathcal{H}$ is an almost malnormal collection;
	\item every non-virtually cyclic abelian subgroup of $G$ is contained in a conjugate of some group of $\mathcal{H}$. 
\end{itemize}
\end{thm}

It is worth noticing that the characterisation of relatively hyperbolic right-angled Coxeter groups (proved in \cite{BHSC, coningoff}), and more generally the characterisation of relatively hyperbolic graph products of finite groups (which is a particular case of \cite[Theorem 8.33]{Qm}), follow easily from Theorem \ref{thm:specialrelhyp}. However, this criterion does not provide a purely algebraic characterisation of relatively hyperbolic special groups, since the subgroups need to be convex-cocompact and that convex-cocompactness is not an algebraic property. Indeed, with respect to the canonical action $\mathbb{Z}^2 \curvearrowright \mathbb{R}^2$, the cyclic subgroup generated by $(0,1)$ is convex-cocompact, whereas the same subgroup is not convex-cocompact with respect to the action $\mathbb{Z}^2 \curvearrowright \mathbb{R}^2$ defined by $(0,1) : (x,y) \mapsto (x+1,y+1)$ and $(1,0) : (x,y) \mapsto (x+1,y)$. On the other hand, we do not know if the convex-cocompacteness assumption can be removed, ie., we know do not know whether or not a finitely generated malnormal subgroup is automatically convex-cocompact. 

Nevertheless, the previous statement provides an algebraic criterion if one restricts our attention to a collection of subgroups which we know to be convex-cocompact. In this spirit, we show the following beautiful characterisation of virtually special groups which are hyperbolic relative to virtually abelian subgroups.

\begin{thm}\label{thm:RHspecialAbelianMain}
A virtually cocompact special group is hyperbolic relative to a finite collection of virtually abelian subgroups if and only if it does not contain $\mathbb{F}_2 \times \mathbb{Z}$ as a subgroup.
\end{thm}

Up to our knowledge, it is the first general algebraic characterisation of relative hyperbolicity in the literature, making. Theorem \ref{thm:RHspecialAbelianMain} the most significative contribution of this paper. (It also generalises Corollary \ref{cor:main2}.)

As an interesting consequence of this theorem it turns out that, among special groups, being hyperbolic relative to abelian subgroups is preserved under elementary equivalence; in fact, the property depends only on the universal theory of the group. Therefore, once we know that limit groups are virtually special, we can reprove that they are hyperbolic relative to abelian subgroups (see Section \ref{section:elementaryequivalence}).

Being hyperbolic relative to free abelian subgroups is also known as \emph{having isolated flats} among CAT(0) groups \cite{IsolatedFlats}, and it is an interesting particular case of relatively hyperbolic groups as it provides interesting additional information on the group. We refer to \cite{DrutuSapirI, DrutuSapirII, GrovesLimitRHI, GrovesLimitRHII} for more information on the tools available to study such groups and the kind of information which can be deduced.

We do not know if the conclusion of Theorem \ref{thm:RHspecialAbelianMain} holds for any group acting geometrically on some CAT(0) cube complex. Probably the question is as hard as determining whether groups acting on CAT(0) cube complexes are hyperbolic if and only if they do not contain $\mathbb{Z}^2$.

As a concluding remark, I would like to emphasize that the present work is inspired by the formalism associated to Guba and Sapir's diagram groups \cite{MR1396957}, which are in my opinion underappreciated. In particular, several results mentioned above are in the same spirit as results I proved in \cite{arXiv:1505.02053, arXiv:1507.01667, article3, Qm} about diagram groups. (However, there is no intersection between the results of these papers and the results proved here.)

The organisation of our paper is as follows. In Section \ref{section:prel}, we give some preliminary definitions and properties about nonpositively-curved cube complexes. In Section \ref{section:formalism}, we describe our combinatorial formalism. A couple of easy applications of the formalism are proved in Section \ref{section:warmup}; in particular, Theorem \ref{thm:main2} is proved in Section \ref{section:flat}. Section \ref{section:RHspecial} is dedicated to the relative hyperbolicity of special groups: more precisely, Theorem \ref{thm:RHspecialMain} is proved in Section \ref{section:RH} and Theorem \ref{thm:RHspecialAbelianMain} in Section \ref{section:RHabelian}.

\paragraph{Acknowledgments.} I am grateful to Peter Ha\"{i}ssinsky for his comments on the first version of this article.

\section{Nonpositively-curved cube complexes}\label{section:prel}

\paragraph{Special cube complexes.} A \textit{cube complex} is a CW-complex constructed by gluing together cubes of arbitrary (finite) dimension by isometries along their faces. Furthermore, it is \textit{nonpositively curved} if the link of any of its vertices is a simplicial \textit{flag} complex (ie., $n+1$ vertices span a $n$-simplex if and only if they are pairwise adjacent), and \textit{CAT(0)} if it is nonpositively curved and simply-connected. See \cite[page 111]{MR1744486} for more information.

\medskip \noindent
A fundamental feature of cube complexes is the notion of \textit{hyperplane}. Let $X$ be a nonpositively curved cube complex. Formally, a \textit{hyperplane} $J$ is an equivalence class of edges, where two edges $e$ and $f$ are equivalent whenever there exists a sequence of edges $e=e_0,e_1,\ldots, e_{n-1},e_n=f$ where $e_i$ and $e_{i+1}$ are parallel sides of some square in $X$. Notice that a hyperplane is uniquely determined by one of its edges, so if $e \in J$ we say that $J$ is the \textit{hyperplane dual to $e$}. Geometrically, a hyperplane $J$ is rather thought of as the union of the \textit{midcubes} transverse to the edges belonging to $J$. See Figure \ref{figure17}.
\begin{figure}
\begin{center}
\includegraphics[trim={0 13cm 10cm 0},clip,scale=0.4]{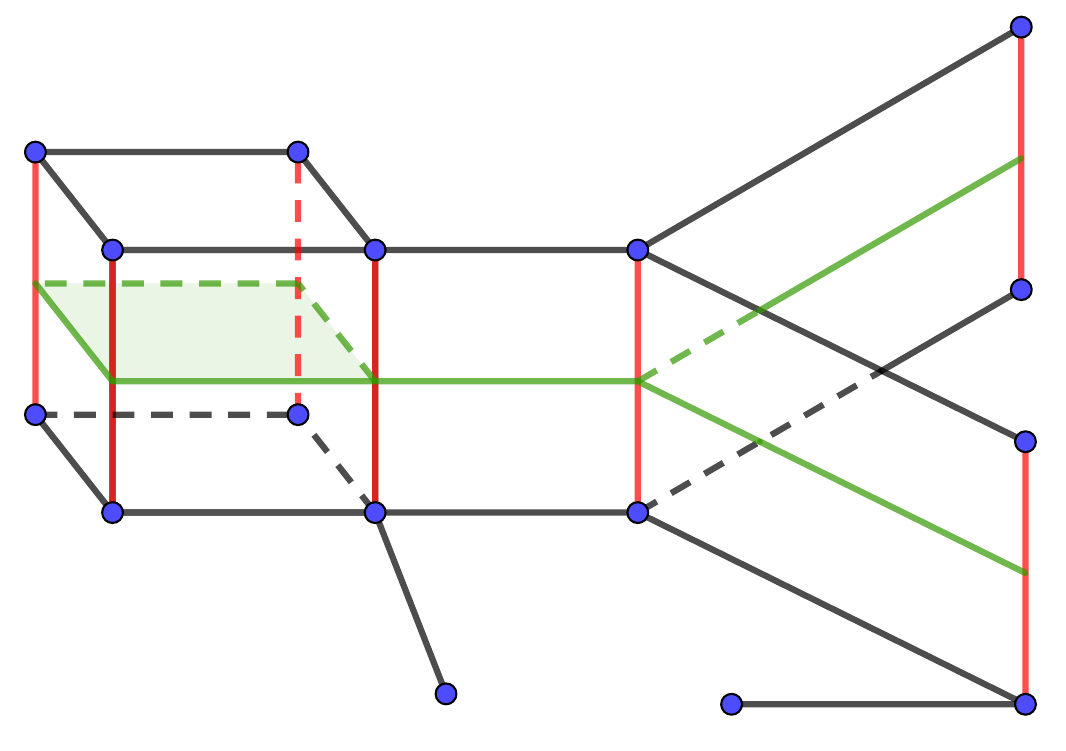}
\caption{A hyperplane (in red) and its geometric realisation (in green).}
\label{figure27}
\end{center}
\end{figure}
Similarly, one may define \textit{oriented hyperplanes} as classes of oriented edges. If $J$ is the hyperplane dual to an edge $e$ and if we fix an orientation $\vec{e}$, we will note $\vec{J}$ the oriented hyperplane dual to $\vec{e}$. It may be thought of as an \textit{orientation} of $J$, and we will note $- \vec{J}$ the opposite orientation of $J$. 

\begin{definition}\label{def:OCrossing}
Let $X$ be a cube complex. The \emph{crossing graph} $\Delta X$ is the graph whose vertices are the hyperplanes of $X$ and whose edges link two transverse hyperplane. Similarly, the \emph{oriented crossing graph} $\vec{\Delta} X$ is the graph whose vertices are the oriented hyperplanes of $X$ and whose edges link two oriented hyperplanes whenever their underlying unordered hyperplanes are transverse.
\end{definition}

\noindent
Roughly speaking, \emph{special cube complexes} are nonpositively-curved cube complexes which do not contain ``pathological configurations'' of hyperplanes. Let us define precisely what these configurations are.

\begin{definition}
Let $J$ be a hyperplane with a fixed orientation $\vec{J}$. We say that $J$ is:
\begin{itemize}
	\item \textit{2-sided} if $\vec{J} \neq - \vec{J}$,
	\item \textit{self-intersecting} if there exist two edges dual to $J$ which are non-parallel sides of some square,
	\item \textit{self-osculating} if there exist two oriented edges dual to $\vec{J}$ with the same initial points or the same terminal points, but which do not belong to a same square.
\end{itemize}
Moreover, if $H$ is another hyperplane, then $J$ and $H$ are:
\begin{itemize}
	\item \textit{transverse} if there exist two edges dual to $J$ and $H$ respectively which are non-parallel sides of some square,
	\item \textit{inter-osculating} if they are transverse, and if there exist two edges dual to $J$ and $H$ respectively with one common endpoint, but which do not belong to a same square.
\end{itemize}
\end{definition}
\noindent
Sometimes, one refers 1-sided, self-intersecting, self-osculating and inter-osculating hyperplanes as \textit{pathological configurations of hyperplanes}. The last three configurations are illustrated on Figure \ref{figure17}.
\begin{figure}
\begin{center}
\includegraphics[trim={0, 19.5cm 1cm 0},clip,scale=0.45]{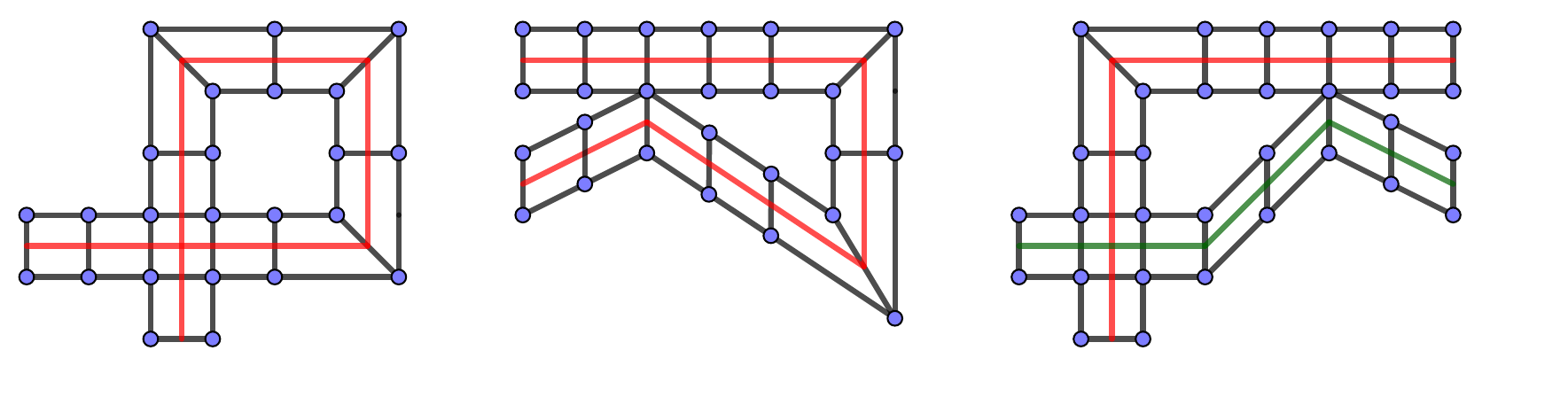}
\caption{From left to right: self-intersection, self-osculation, inter-osculation.}
\label{figure17}
\end{center}
\end{figure}

\begin{definition}
A \emph{special cube complex} is a nonpositively curved cube complex whose hyperplanes are two-sided and which does not contain self-intersecting, self-osculating or inter-osculating hyperplanes. A group which can be described as the fundamental group of a compact special cube complex is \emph{special}. A \emph{virtually special group} is a group which contains a finite-index subgroup which is special. 
\end{definition}

\noindent
We emphasize that, in this article, a special group is the fundamental group of a \emph{compact} special cube complex. To avoid ambiguity with fundamental groups of not necessarily compact special cube complexes, they are sometimes referred to as \emph{cocompact special groups} in the litterature. (We followed this convention in the introduction for clarity.)

\paragraph{Homotopy.} Let $X$ be a cube complex (not necessarily nonpositively-curved). For us, a \emph{path} in $X$ is the data of a sequence of successively adjacent edges. What we want to understand is when two such paths are homotopic (with fixed endpoints). For this purpose, we need to introduce the following elementary transformations. One says that:  
\begin{itemize}
	\item a path $\gamma \subset X$ contains a \emph{backtrack} if the word of oriented edges read along $\gamma$ contains a subword $ee^{-1}$ for some oriented edge $e$;
	\item a path $\gamma' \subset X$ is obtained from another path $\gamma \subset X$ by \emph{flipping a square} if the word of oriented edges read along $\gamma'$ can be obtained to corresponding word of $\gamma$ by replacing a subword $e_1e_2$ with $e_2'e_1'$ where $e_1',e_2'$ are opposite oriented edges of $e_1,e_2$ respectively in some square of $X$. 
\end{itemize}
We claim that these elementary transformations are sufficient to determine whether or not two paths are homotopic. More precisely:

\begin{prop}\label{prop:cubehomotopy}
Let $X$ be a cube complex and $\gamma,\gamma' \subset X$ two paths with the same endpoints. The paths $\gamma,\gamma'$ are homotopic (with fixed endpoints) if and only if $\gamma'$ can be obtained from $\gamma$ by removing or adding backtracks and flipping squares. 
\end{prop}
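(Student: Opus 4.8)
The plan is to prove the easy implication directly and the substantive implication by a homotopy-square subdivision argument, after reducing to the $2$-skeleton.

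For the ``if'' direction, I would check that each elementary transformation preserves the homotopy class rel endpoints. Removing or adding a backtrack $ee^{-1}$ does not change the class, since the subpath $ee^{-1}$ is homotopic rel endpoints to the constant path at the initial vertex of $e$. Flipping a square replaces a subpath $e_1e_2$ running along two consecutive sides of a square $S$ by the subpath $e_2'e_1'$ running along the two complementary sides; since $S$ is a genuine $2$-cell of $X$, these two subpaths are homotopic rel endpoints through $S$. Hence any finite sequence of such moves produces a homotopic path.

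For the converse, I would first reduce to the case where $X$ is a square complex. Edge-paths lie in the $1$-skeleton, and the inclusion of the $2$-skeleton $X^{(2)} \hookrightarrow X$ induces an isomorphism on fundamental groupoids based at the $0$-skeleton; thus $\gamma$ and $\gamma'$ are homotopic rel endpoints in $X$ if and only if they are so in $X^{(2)}$, and I may assume $X = X^{(2)}$. Then I would take a homotopy $H : [0,1] \times [0,1] \to X$ with $H(0,\cdot) = \gamma$, $H(1,\cdot) = \gamma'$, and $H(\cdot,0)$, $H(\cdot,1)$ constant at the two fixed endpoints. Using cellular approximation applied rel $\partial([0,1]^2)$, together with a sufficiently fine subdivision of the domain into a grid, I can homotope $H$ into combinatorial form: each grid-vertex maps to a $0$-cell, each grid-edge to an edge-path, and each closed grid-square maps into a single closed cube (a vertex, an edge, or a square) of $X$. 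The bottom edge of the grid then recovers $\gamma$ up to backtracks, and the top edge recovers $\gamma'$. Sweeping a horizontal cut from the bottom row to the top row, passing across a single grid-square replaces the path along its bottom-and-left sides by the path along its top-and-right sides: if that grid-square maps into a $0$-cell or a single edge of $X$, this is an insertion or deletion of backtracks, and if it maps onto a genuine square of $X$, it is exactly a square flip, possibly bracketed by backtrack moves. Composing these local moves across every grid-square, row by row, transforms $\gamma$ into $\gamma'$ by finitely many elementary transformations.

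The main obstacle is the combinatorialization step: arranging, by a perturbation of $H$ rel $\partial$ together with a fine enough grid, that every closed grid-square maps into a single closed cube and that grid-vertices map to $0$-cells, all while keeping the two vertical sides pinned at the fixed endpoints. Once this normal form for $H$ is secured, the remaining verification --- that each step across a single grid-square is realised by backtracks and at most one square flip, including the degenerate cases where a grid-square collapses onto an edge or a point --- is routine bookkeeping.
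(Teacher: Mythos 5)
Your proposal is correct in outline, but it takes a genuinely different route from the paper: the paper disposes of this proposition in one line, citing the general groupoid-theoretic fact (Brown, \emph{Topology and Groupoids}, Statement 9.1.6) that the fundamental groupoid of a CW-complex based at its $0$-skeleton is presented by the edge-path groupoid with one relation per $2$-cell; since the $2$-cells of a cube complex are squares, the relations are exactly the square flips, and the free-groupoid cancellations are the backtrack moves. You instead reprove this special case from scratch by combinatorialising a homotopy $H:[0,1]^2\to X$ and sweeping across a grid, which is essentially the argument hiding inside the cited theorem. Your reduction to $X^{(2)}$, the easy direction, and the row-by-row sweep (including the degenerate grid-squares, which are genuinely just boundary bookkeeping: a grid-square whose sides map into a single closed square of $X$ bounds an edge-loop of combinatorial length at most four, and each possible type is realised by backtracks and at most one flip) are all sound. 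The one point where your sketch understates the work is the normal-form step: cellular approximation rel $\partial[0,1]^2$ only puts the $i$-skeleton of the grid into $X^{(i)}$; it does \emph{not} by itself make each closed grid-square land in a single closed cube. For that you need a simplicial/cubical approximation-type argument --- a Lebesgue-number choice of grid against the cover of $X^{(2)}$ by open stars, followed by a push of each small square into its carrier --- or, equivalently, triangulate the square complex by diagonals and invoke simplicial approximation, at the cost of handling triangles rather than squares in the sweep. With that step supplied your proof is complete; what your approach buys is self-containedness, while the paper's citation buys brevity and a statement valid for arbitrary CW-complexes.
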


\noindent
This statement follows from the fact that flipping squares provide the relations of the fundamental groupoid of $X$; see \cite[Statement 9.1.6]{BrownGroupoidTopology} for more details.

\paragraph{Contracting subcomplexes.} In Section \ref{section:RHspecial}, we will need several statements proved in \cite{article3} about \emph{contracting subcomplexes} in CAT(0) cube complexes. Below, we recall basic definitions and state the results which we will use later. 

\begin{definition}
Let $S$ be a metric space. A subspace $C \subset S$ is \emph{contracting} if there exists some constant $d \geq 0$ such that the nearest-point projection onto $C$ of any ball disjoint from $C$ has diameter at most $d$. An isometry $g \in \mathrm{Isom}(S)$ is \emph{contracting} if, for some basepoint $x \in S$, the map $n \mapsto g^n \cdot x$ induces a quasi-isometric embedding $\mathbb{Z} \to S$ whose image is contracting. 
\end{definition}

\noindent
In CAT(0) cube complexes, the nearest-point projection onto a given convex subcomplex has nice properties. For instance, according to \cite[Lemma 1]{arXiv:1505.02053} and \cite[Lemmas 2.8 and 2.10]{coningoff}, one has:

\begin{prop}
Let $\widetilde{X}$ be a CAT(0) cube complex and $C \subset \widetilde{X}$ a convex subcomplex. For every vertex $x \in \widetilde{X}$, there exists a unique vertex of $C$ minimising the distance to $x$. This vertex, denoted by $\mathrm{proj}_C(x)$, is the \emph{projection} of $x$ onto $C$. Moreover, the map $\mathrm{proj}_C : X \to C$ is $1$-Lipschitz, and, for every vertex $x \in \widetilde{X}$, any hyperplane separating $x$ from its projection onto $C$ must separate $x$ from $C$.
\end{prop}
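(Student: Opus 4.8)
The plan is to work entirely with the combinatorial ($\ell^1$) metric, using the standard fact that for vertices $a,b$ of a CAT(0) cube complex the distance $d(a,b)$ equals the number of hyperplanes separating $a$ from $b$, and that geodesics are exactly the edge-paths crossing each separating hyperplane once. Relative to the convex subcomplex $C$, I would sort the hyperplanes into two types: those \emph{crossing} $C$ (vertices of $C$ on both sides) and those \emph{not crossing} $C$ (so that $C$ lies entirely in one half-space). The central object is the \emph{gate}: a vertex $y \in C$ lying on the same side as $x$ of every hyperplane crossing $C$. First I would record the key reduction: for any $y \in C$, a hyperplane separating $x$ from $y$ either crosses $C$, or does not, and in the latter case it must separate $x$ from all of $C$. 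Writing $\mathcal{W}$ for the (fixed, independent of $y$) set of non-crossing hyperplanes separating $x$ from $C$, this gives $d(x,y) = |\mathcal{W}| + \#\{\text{hyperplanes crossing } C \text{ that separate } x \text{ from } y\}$, so minimising $d(x,\cdot)$ over $C$ is exactly the same as making the second term vanish, i.e. finding the gate.

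Uniqueness is then immediate: if $y_0,y_1 \in C$ were two gates, any hyperplane separating them would cross the convex set $C$, yet would have to leave both on the $x$-side, a contradiction; hence $y_0 = y_1$. Existence is the only genuine difficulty, and here I would argue by descent on the distance. Since distances are non-negative integers, choose $y_0 \in C$ with $d(x,y_0)$ minimal among vertices of $C$. If $y_0$ failed to be a gate, some hyperplane $J$ crossing $C$ would separate $x$ from $y_0$; choosing a vertex $z \in C$ on the $x$-side of $J$ (which exists because $J$ crosses $C$), I would form the median $m = \mathrm{med}(x,y_0,z)$. Convexity of $C$ forces $m \in [y_0,z] \subset C$, while the majority-side description of the median shows that the hyperplanes separating $x$ from $m$ are precisely those separating $x$ from \emph{both} $y_0$ and $z$; since $J$ separates $x$ from $y_0$ but not from $z$, this set is a proper subset of the hyperplanes separating $x$ from $y_0$, whence $d(x,m) < d(x,y_0)$, contradicting minimality. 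This forces $y_0$ to be a gate, establishing existence. I expect this median-descent step to be the main obstacle, since it is the point where the median structure of CAT(0) cube complexes is genuinely used; all of the required median facts are available in the references cited above.

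With the gate characterisation in hand, the remaining assertions are short. For the final claim, let $J$ separate $x$ from $\mathrm{proj}_C(x)$: if $J$ crossed $C$ then $\mathrm{proj}_C(x)$ would lie on the $x$-side, so $J$ could not separate them; hence $J$ does not cross $C$, meaning $C$ lies in a single half-space, necessarily the one containing $\mathrm{proj}_C(x)$, which is opposite $x$, so $J$ separates $x$ from all of $C$. For the $1$-Lipschitz property it suffices to treat adjacent vertices $x,x'$ (joined by an edge dual to a hyperplane $H$) and then concatenate along a geodesic. Any hyperplane $K$ separating $\mathrm{proj}_C(x)$ from $\mathrm{proj}_C(x')$ crosses $C$, so by the gate property it keeps the two projections on the $x$- and $x'$-sides respectively; were $x,x'$ on the same side of $K$, the two projections would agree on that side, so $K$ must separate $x$ from $x'$, forcing $K = H$. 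Hence at most one hyperplane separates the two projections, giving $d(\mathrm{proj}_C(x),\mathrm{proj}_C(x')) \leq 1$, as required.
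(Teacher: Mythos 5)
Your proof is correct. The paper does not actually prove this proposition itself --- it quotes it with references to \cite[Lemma 1]{arXiv:1505.02053} and \cite[Lemmas 2.8 and 2.10]{coningoff} --- and your argument (decomposing the separating hyperplanes into those crossing $C$ and those separating $x$ from all of $C$, establishing the gate by median-descent from a closest point, and deducing uniqueness, the separation property, and the $1$-Lipschitz bound from the gate characterisation) is essentially the standard proof found in those cited sources.
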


\noindent
The next characterisation of contracting convex subcomplexes in CAT(0) cube complexes was essentially proved in \cite{article3}. Before, we need to introduce several definitions:
\begin{itemize}
	\item A \emph{facing triple} is the data of three hyperplanes such that no one separates the two others.
	\item A \emph{join of hyperplanes} $(\mathcal{H}, \mathcal{V})$ is the data of two collections of hyperplanes $\mathcal{H}, \mathcal{V}$ which do not contain any facing triple and such that any hyperplane of $\mathcal{H}$ is transverse to any hyperplane of $\mathcal{V}$; it is \emph{$L$-thin} for some $L \geq 0$ if $\min( \# \mathcal{H}, \# \mathcal{V}) \leq L$. If moreover $\mathcal{H}, \mathcal{V}$ are collections of pairwise disjoint hyperplanes, $(\mathcal{H}, \mathcal{V})$ is a \emph{grid of hyperplanes}.
	\item A \emph{flat rectangle} is a combinatorial isometric embedding $[0,p] \times [0,q] \hookrightarrow \widetilde{X}$; it is \emph{$L$-thin} for some $L \geq 0$ if $\min(p,q) \leq L$, and \emph{$L$-thick} otherwise. Most of the time, a flat rectangle is identified with its image. 
\end{itemize}
Moreover, given a subset $Y$ in some CAT(0) cube complex, we denote by $\mathcal{H}(Y)$ the set of the hyperplanes separating at least two vertices of $Y$.

\begin{thm}\label{thm:contractingCube}
Let $\widetilde{X}$ be a finite-dimensional CAT(0) cube complex and $C \subset \widetilde{X}$ a convex subcomplex. The following statements are equivalent:
\begin{itemize}
	\item[(i)] $C$ is contracting;
	\item[(ii)] the join of hyperplanes $(\mathcal{H}, \mathcal{V})$ satisfying $\mathcal{H} \cap \mathcal{H}(C)= \emptyset$ and $\mathcal{V} \subset \mathcal{H}(C)$ are uniformly thin;
	\item[(iii)] the grid of hyperplanes $(\mathcal{H}, \mathcal{V})$ satisfying $\mathcal{H} \cap \mathcal{H}(C)= \emptyset$ and $\mathcal{V} \subset \mathcal{H}(C)$ are uniformly thin;
	\item[(iv)] the flat rectangles $R : [0,p] \times [0,q] \hookrightarrow \widetilde{X}$ satisfying $R \cap C = [0,p] \times \{ 0\}$ are uniformly thin.
\end{itemize}
\end{thm}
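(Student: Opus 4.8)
The plan is to prove that the four statements are equivalent by combining two essentially formal links with two substantial arguments, rather than by a single clean cycle. The two formal links are as follows. First, $(ii)\Rightarrow(iii)$ is immediate, since a grid of hyperplanes is in particular a join of hyperplanes satisfying the same incidence conditions with $\mathcal{H}(C)$, so a uniform thinness bound over all admissible joins restricts to a uniform bound over all admissible grids. Second, the correspondence $(iii)\Leftrightarrow(iv)$ comes from a dictionary between grids and flat rectangles: a flat rectangle $R:[0,p]\times[0,q]\hookrightarrow\widetilde{X}$ with $R\cap C=[0,p]\times\{0\}$ has its dual hyperplanes split into the $p$ hyperplanes crossing the bottom edge and the $q$ hyperplanes crossing a vertical side, and these form a grid; the first family lies in $\mathcal{H}(C)$ because it crosses the edge $[0,p]\times\{0\}\subset C$, and the second is disjoint from $\mathcal{H}(C)$. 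Conversely, from an admissible grid I would reconstruct a flat rectangle, the two pairwise-disjoint, mutually transverse families cutting out an isometrically embedded product subcomplex by the standard duality between hyperplanes and halfspaces in a finite-dimensional CAT(0) cube complex, with convexity of $C$ placing one side of the rectangle along $C$. Under this dictionary $\min(p,q)\le L$ for rectangles translates exactly into uniform thinness of grids.

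The implication $(iii)\Rightarrow(ii)$ is where finite-dimensionality of $\widetilde{X}$ is used. Given an admissible join $(\mathcal{H},\mathcal{V})$ with both $\#\mathcal{H}$ and $\#\mathcal{V}$ large, I would extract from it an admissible grid whose two families are still large. The key point is that inside $\mathcal{H}$ any family of pairwise transverse hyperplanes has cardinality at most $\dim\widetilde{X}$, so the transversality graph on $\mathcal{H}$ has bounded clique number; a Ramsey (or Dilworth) type argument then produces a subfamily of $\mathcal{H}$ of cardinality tending to infinity with $\#\mathcal{H}$ and consisting of pairwise disjoint hyperplanes, and similarly inside $\mathcal{V}$. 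Absence of facing triples and cross-transversality are inherited by subfamilies, and the conditions $\mathcal{H}\cap\mathcal{H}(C)=\emptyset$, $\mathcal{V}\subset\mathcal{H}(C)$ are preserved, so the result is a thick admissible grid. Since the extracted families grow unboundedly when the join does, uniform thinness of grids forces uniform thinness of joins. For the qualitative statement only the fact that the extracted size diverges is needed, so even a weak Ramsey bound suffices.

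A recurring technical step feeding into the dictionary above is the claim that, for a rectangle meeting $C$ exactly along its bottom edge, the transverse family of hyperplanes is genuinely disjoint from $\mathcal{H}(C)$; I would establish this from convexity of $C$ together with the nearest-point projection proposition stated above, using the Helly/median property of hyperplane families to show that if such a hyperplane crossed $C$ then the corresponding corner vertex of the rectangle would be forced into $C$, contradicting $R\cap C=[0,p]\times\{0\}$. This also pins down the depth of the rectangle: each transverse hyperplane separates the top edge from $C$, so the top edge sits at distance exactly $q$ from $C$ and the bottom corners are the projections of the top corners, spread a distance $p$ apart along $C$.

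Finally, for $(i)\Leftrightarrow(ii)$ I would argue geometrically with projections, and this is the step I expect to be the real obstacle. The direction $(\neg i)\Rightarrow(\neg ii)$ is the comfortable one: from a thick rectangle (equivalently, by the above, a thick join) a ball of radius about $\min(p,q)$ centred at a corner is disjoint from $C$ yet projects onto a subset of $C$ of diameter at least $\min(p,q)-1$, so $C$ is not contracting unless $\min(p,q)$ is bounded. The hard direction is $(\neg i)\Rightarrow(\neg ii)$ in the other sense, namely manufacturing a thick join out of a mere failure of contraction: if $C$ is not contracting there is a ball $B$ disjoint from $C$ whose projection has large diameter, and I would take $\mathcal{V}$ to be the hyperplanes separating two extreme projection points (these lie in $\mathcal{H}(C)$) and $\mathcal{H}$ to be hyperplanes separating $B$ from $C$ (these avoid $\mathcal{H}(C)$). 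Proving that every member of $\mathcal{H}$ is transverse to every member of $\mathcal{V}$, and that no facing triples arise, is the technical heart of the argument and precisely the content isolated in \cite{article3}; I would invoke those lemmas to complete this implication and close the equivalences.
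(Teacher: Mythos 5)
Your overall skeleton matches the paper's: $(i)\Leftrightarrow(ii)$ is delegated to \cite{article3}, $(ii)\Rightarrow(iii)$ is trivial, and $(iii)\Rightarrow(ii)$ goes through Ramsey's theorem plus finite dimensionality, exactly as in the text. The genuine gap is in your treatment of $(iv)\Rightarrow(iii)$, which you present as a formal ``dictionary'' but which is the only implication the paper actually has to prove. Reconstructing a flat rectangle from an admissible grid is \emph{not} standard duality between hyperplanes and halfspaces: the paper builds a disc diagram of minimal complexity bounded by the cycle of convex subcomplexes $(N(V_1),C,N(V_p),N(H_1))$, invokes \cite[Corollary 2.17]{coningoff} to identify it with a flat rectangle $[0,a]\times[0,b]$, and then---this is the step your sketch omits entirely---uses \emph{minimality} together with convexity of $C$ to force $D\cap C=[0,a]\times\{0\}$: if some vertex $(c,d)$ of the rectangle lay in $C$, convexity would place the intervals from $(0,0)$ and $(a,0)$ to $(c,d)$ inside $C$, and $[0,a]\times[d,b]$ would be a smaller diagram bounded by the same cycle. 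Without this argument, hypothesis $(iv)$ simply does not apply to the rectangle you produce, since $(iv)$ only quantifies over rectangles meeting $C$ exactly in their bottom side. Your claim that the dictionary is exact is also off: the grid's hyperplanes need not all be dual to the rectangle, and the paper only gets $a\geq p-2$, $b\geq q-1$, hence $\min(p,q)\leq L+2$ rather than $\leq L$ (harmless, but symptomatic of treating as formal what is not).

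Worse, the ``recurring technical step'' you rely on for the rectangle-to-grid direction is false as stated. Take $\widetilde{X}$ to be a single $3$-cube with coordinates $(a,b,x)\in[0,1]^3$, let $C$ be the face $\{a=0\}$ (convex), and let $R$ be the union of the faces $\{b=0\}$ and $\{a=1\}$, parametrised as $[0,1]\times[0,2]\hookrightarrow\widetilde{X}$ by $(x,0)\mapsto(0,0,x)$, $(x,1)\mapsto(1,0,x)$, $(x,2)\mapsto(1,1,x)$. One checks this is a combinatorial isometric embedding with $R\cap C=[0,1]\times\{0\}$; yet the ``horizontal'' hyperplane $\{b=\frac{1}{2}\}$, dual to the edge $\{0\}\times[1,2]$, crosses $C$, so $\mathcal{H}\cap\mathcal{H}(C)\neq\emptyset$. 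The same example refutes your follow-up claims: the top corner $(0,2)$ maps to $(1,1,0)$, which is at distance $1\neq q=2$ from $C$, and its projection onto $C$ is $(0,1,0)$, not the bottom corner; moreover the two horizontal hyperplanes $\{a=\frac{1}{2}\}$ and $\{b=\frac{1}{2}\}$ are transverse, so the horizontal family is not even a family of pairwise disjoint hyperplanes, and passing to a grid would require a further Ramsey extraction you do not mention. Your corner-forcing gate argument is only valid for the first hyperplane $H_1$ dual to $\{0\}\times[0,1]$ (there the unique wall separating $(0,1)$ from $(0,0)\in C$ does force $(0,1)\in C$ if $H_1$ crossed $C$); it does not propagate to the higher $H_j$, precisely because of configurations like the one above. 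So both directions of your $(iii)\Leftrightarrow(iv)$ dictionary are unproven, and the grid-to-rectangle direction in particular needs the disc-diagram machinery and the minimality argument that constitute the actual content of the paper's proof.
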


\begin{proof}
The equivalence $(i) \Leftrightarrow (ii)$ is proved by \cite[Theorem 3.6]{article3}. The implication $(ii) \Rightarrow (iii)$ is clear, and the converse follows from the assumption that $\widetilde{X}$ is finite-dimensional and from Ramsey's theorem (see for instance \cite[Lemma 3.7]{coningoff}). The implication $(iii) \Rightarrow (iv)$ is clear. It remains to prove the converse $(iv) \Rightarrow (iii)$. 

\medskip \noindent
So let $(\mathcal{H}, \mathcal{V})$ be a grid of hyperplanes satisfying $\mathcal{H} \cap \mathcal{H}(C)= \emptyset$ and $\mathcal{V} \subset \mathcal{H}(C)$. Write $\mathcal{V}$ as $\{V_1, \ldots, V_p \}$ such that $V_i$ separates $V_{i-1}$ and $V_{i+1}$ for every $2 \leq i \leq p-1$; and $\mathcal{H}$ as $\{H_1, \ldots, H_q\}$ such that $H_i$ separates $H_{i-1}$ and $H_{i+1}$ for every $2 \leq i \leq q-1$ and such that $H_q$ separates $H_1$ and $C$. Let $D \to \widetilde{X}$ be a disc diagram of minimal complexity bounded by the cycle of subcomplexes $\mathcal{C}= (N(V_1),C,N(V_p),N(H_1))$. (We refer to \cite{coningoff, article3} and references therein for more information on disc diagrams in CAT(0) cube complexes.) According to \cite[Corollary 2.17]{coningoff}, $D \to \widetilde{X}$ is a flat rectangle; write $D=[0,a] \times [0,b]$ such that $[0,a] \times \{0 \} \subset C$, $\{a\} \times [0,b] \subset N(V_p)$, $[0,a] \times \{b\} \subset N(H_1)$ and $\{0\} \times [0,b] \subset N(V_1)$. Suppose that the vertex $(c,d) \in D$ belongs to $C$. Since $C$ is convex, the intervals between $(0,0)$ and $(c,d)$, and $(a,0)$ and $(c,d)$, have to lie in $C$. As a consequence, $[0,a] \times [d,b]$ defines a disc diagram bounded by $\mathcal{C}$. By minimality of $D$, necessarily $d=0$. Thus, we have proved that $D \cap C= [0,a] \times \{0\}$, which implies that there exists a uniform constant $L \geq 0$ such that $\min(a,b) \leq L$. But the hyperplanes $H_2, \ldots, H_q$ separate $H_1$ and $C$, so they must intersect $\{0\} \times [0,b]$, hence $b \geq q-1$; similarly, the hyperplanes $V_2, \ldots, V_{p-1}$ separate $V_1$ and $V_p$, so they must intersect $[0,a] \times \{ 0 \}$, hence $a \geq p-2$. Therefore, $\min(p,q) \leq \min(a,b) +2 \leq L+2$. This proves $(iii)$. 
\end{proof}

\noindent
The previous statement justifies the following definition:

\begin{definition}
Let $\widetilde{X}$ be a CAT(0) cube complexes, $C \subset \widetilde{X}$ a convex subcomplex and $L \geq 0$ a constant. Then $C$ is \emph{$L$-contracting} if every join of hyperplanes $(\mathcal{H}, \mathcal{V})$ satisfying $\mathcal{H} \cap \mathcal{H}(C)= \emptyset$ and $\mathcal{V} \subset \mathcal{H}(C)$ is $L$-thin.
\end{definition}

\noindent
We conclude this section with a statement which will be useful in Section \ref{section:RHspecial} (and which is inspired from \cite[Lemmas 4.4 and 4.5]{MR3339446}). 

\begin{lemma}\label{lem:geodcontracting}
Let $X$ be a CAT(0) cube complex and $C$ a contracting subcomplex. There exists a constant $L \geq 0$ such that, if $x,y \in X$ are two vertices such that the distance between their projections onto $C$ is at least $L$, then, for every geodesic $[x,y]$ between $x$ and $y$, the unique vertex of $[x,y]$ at distance $d(x,C)$ from $x$ must be at distance at most $L$ from the projection of $x$ onto $C$. 
\end{lemma}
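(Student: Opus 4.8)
The plan is to translate the statement into hyperplane combinatorics and then feed a suitable grid of hyperplanes into the contracting hypothesis. Write $p = \mathrm{proj}_C(x)$, $q = \mathrm{proj}_C(y)$, $r = \mathrm{proj}_C(z)$, and for vertices $a,b$ let $\mathcal{H}(a\mid b)$ denote the set of hyperplanes separating $a$ and $b$. I would first recall two standard facts about projection to a convex subcomplex: the hyperplanes separating two projections are exactly the hyperplanes of $\mathcal{H}(C)$ separating the two original points, so that $\mathcal{H}(p\mid q) = \mathcal{H}(x\mid y)\cap\mathcal{H}(C)$ and $\mathcal{H}(p\mid r) = \mathcal{H}(x\mid z)\cap\mathcal{H}(C)$; and the hyperplanes of $\mathcal{H}(x\mid p)$ are precisely those separating $x$ from $C$, hence are disjoint from $C$. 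Since $z$ satisfies $d(x,z)=d(x,C)=d(x,p)$, the sets $\mathcal{H}(x\mid z)$ and $\mathcal{H}(x\mid p)$ have the same cardinality, so from $\mathcal{H}(z\mid p)=\mathcal{H}(x\mid z)\,\triangle\,\mathcal{H}(x\mid p)$ I obtain $d(z,p)=2\,\#\big(\mathcal{H}(x\mid z)\setminus\mathcal{H}(x\mid p)\big)$. Splitting the latter set according to whether a hyperplane crosses $C$ gives the crossing ones $\mathcal{H}(x\mid z)\cap\mathcal{H}(C)=\mathcal{H}(p\mid r)$, of size $d(p,r)$, and the non-crossing ones, which form the set $\mathcal{S}$ of hyperplanes separating $x$ from $z$ with all of $C$ on the $x$-side. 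This yields the key identity $d(z,p)=2\,d(p,r)+2\,\#\mathcal{S}$, reducing the lemma to bounding $d(p,r)$ and $\#\mathcal{S}$.

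For the first term I would use the ball formulation of contraction. Let $D$ be the contraction constant. The closed ball of radius $d(x,C)-1$ around $x$ is disjoint from $C$, so its projection has diameter at most $D$; as $x$ and the penultimate vertex $z'$ of $[x,z]$ both lie in this ball and $\mathrm{proj}_C$ is $1$-Lipschitz with $d(z',z)=1$, I get $d(p,r)\leq D+1$. (The case $d(x,C)=0$ is trivial, and I tacitly assume $d(x,y)\geq d(x,C)$, the only case in which the statement has content.)

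The heart of the argument is bounding $\#\mathcal{S}$, and this is where the hypothesis $d(p,q)\geq L$ enters. Set $\mathcal{V}' = \mathcal{H}(p\mid q)\setminus\mathcal{H}(x\mid z)$; since $\mathcal{H}(p\mid q)\cap\mathcal{H}(x\mid z)=\mathcal{H}(p\mid r)$ has size $d(p,r)\leq D+1$, we have $\#\mathcal{V}'\geq d(p,q)-D-1\geq L-D-1$. I would then show that every $H\in\mathcal{S}$ is transverse to every $V\in\mathcal{V}'$ by checking that all four quarter-spaces of $H$ and $V$ are nonempty: writing the $x$-side of $H$ (which contains $C$, hence $p$ and $q$) and its $z$-side (which contains $z$ and $y$), the vertices $p$ and $q$ fill the two quarter-spaces on the $x$-side of $H$ because $V$ separates $p$ from $q$; the vertex $y$ fills the $q$-side quarter-space on the $z$-side of $H$; and $z$ itself fills the remaining quarter-space precisely because $V\notin\mathcal{H}(x\mid z)$ places $z$ on the $p$-side of $V$.

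Finally I would convert this transversality into a genuine grid of hyperplanes. Both $\mathcal{S}$ and $\mathcal{V}'$ consist of hyperplanes separating a fixed pair of vertices, so each may a priori contain facing triples; to remove them I would invoke finite-dimensionality together with Ramsey's theorem (exactly as in the proof of Theorem~\ref{thm:contractingCube}) to extract pairwise-disjoint subfamilies $\mathcal{S}''\subseteq\mathcal{S}$ and $\mathcal{V}''\subseteq\mathcal{V}'$, of sizes bounded below by Ramsey functions of $\#\mathcal{S}$ and $\#\mathcal{V}'$. Being pairwise disjoint and separating a common pair, each such subfamily is a chain, hence facing-triple-free, so $(\mathcal{S}'',\mathcal{V}'')$ is a grid of hyperplanes with $\mathcal{S}''\cap\mathcal{H}(C)=\emptyset$ and $\mathcal{V}''\subseteq\mathcal{H}(C)$. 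Characterisation (iii) of Theorem~\ref{thm:contractingCube} then forces $\min(\#\mathcal{S}'',\#\mathcal{V}'')\leq L_0$, where $L_0$ is the contraction constant in grid form. Choosing $L$ so that $L-D-1$ exceeds the relevant Ramsey threshold guarantees $\#\mathcal{V}''>L_0$, whence $\#\mathcal{S}''\leq L_0$; Ramsey then bounds $\#\mathcal{S}$ by a constant $R$ depending only on $L_0$ and $\dim X$, giving $d(z,p)\leq 2(D+1)+2R$. Taking $L$ to be the maximum of this bound and the Ramsey threshold finishes the proof. The main obstacle, requiring the most care, is exactly this passage from "mutually transverse families" to an honest grid: the families produced are only facing-triple-free after the Ramsey extraction, and one must verify that the extracted chains stay large and remain mutually transverse.
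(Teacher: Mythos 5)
Your proof is correct, but it takes a genuinely different route from the paper's. The paper argues purely metrically from the ball definition of contraction: it first establishes a quasi-additivity estimate $|d(x,y)-d(x,p)-d(p,q)-d(q,y)|\leq 2(K+1)$ for vertices whose projections onto $C$ are far apart, and then deduces the lemma from this fact together with triangle inequalities applied to the pair $x',y'\in[x,y]$, obtaining the explicit constant $L=6(K+1)$; hyperplanes never appear. Your argument instead translates everything into hyperplane combinatorics: the identity $d(z,p)=2\,d(p,r)+2\,\#\mathcal{S}$, the bound $d(p,r)\leq D+1$, and the transversality of every $H\in\mathcal{S}$ with every $V\in\mathcal{V}'$ are all correct, and feeding the resulting configuration into Theorem \ref{thm:contractingCube} does close the proof. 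Two remarks. First, your Ramsey detour is unnecessary: a family of hyperplanes all separating the same fixed pair of vertices cannot contain a facing triple, since any two disjoint hyperplanes separating $x$ from $z$ are nested (their halfspaces containing $x$ are comparable), so any three pairwise-disjoint ones form a chain whose middle member separates the outer two. Hence $(\mathcal{S},\mathcal{V}')$ is already a join of hyperplanes with $\mathcal{S}\cap\mathcal{H}(C)=\emptyset$ and $\mathcal{V}'\subseteq\mathcal{H}(C)$, and characterisation (ii) of Theorem \ref{thm:contractingCube} applies directly: once $\#\mathcal{V}'\geq d(p,q)-D-1>L_0$ you get $\#\mathcal{S}\leq L_0$ and the cleaner bound $d(z,p)\leq 2(D+1)+2L_0$, with no Ramsey-number dependence. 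Second, what each approach buys: your route imports the hypotheses of Theorem \ref{thm:contractingCube} --- in particular finite-dimensionality, used both in that theorem and in your Ramsey extraction --- which Lemma \ref{lem:geodcontracting} as stated does not assume; this is harmless for every application in the paper (universal covers of compact special cube complexes), but the paper's metric proof avoids it and yields a constant linear in the contraction constant, whereas yours, as written, passes through Ramsey thresholds. A last cosmetic point: the existence of the vertex $z$ need not be tacitly assumed, since the hypothesis $d(p,q)\geq L$ already forces $d(x,y)\geq d(x,C)$ --- otherwise the open ball $B(x,d(x,y))$ would be disjoint from $C$ and, by the contraction property and the $1$-Lipschitz projection, the projections of $x$ and $y$ would be at distance at most $D+1<L$.
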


\begin{proof}
Let $K \geq 0$ be such that the projection onto $C$ of every ball disjoint from $C$ has diameter at most $K$. We begin by proving the following fact:

\begin{fact}
For all vertices $x,y \in X$ whose projections onto $C$ are at distance at least $2(K+1)$ apart, then 
$$|d(x,y)-d(x,p)-d(p,q)-d(q,y)| \leq 2(K+1)$$
where $p$ and $q$ denote respectively the projections of $x$ and $y$ onto $C$.
\end{fact}

\noindent
Let $x,y \in X$ be two vertices whose respective projections $p,q$ onto $C$ are at least at distance  $2(K+1)$ apart. Fix a geodesic $[x,y]$ between $x$ and $y$. Notice that, if $d(x,y) \leq d(x,p)$, then the open ball $B(x,d(x,y))$ is disjoint from $C$, hence $d(p,q) \leq K+1$, contradicting our assumption. Therefore, $[x,y]$ contains a unique vertex $x'$ at distance $d(x,p)$ from $x$; notice that, since the open ball $B(x,d(x,x'))$ is disjoint from $C$, the distance between the projection of $x'$ onto $C$, say $p'$, and $p$ is at most $K+1$. Similarly, $[x,y]$ contains a unique vertex $y'$ at distance $d(y,q)$ from $y$; moreover, the distance between the projection of $y'$ onto $C$, say $q'$, and $q$ is at most $K+1$. Next, notice that, if $d(x,y') < d(x,x')$, then $y'$ belongs to the open ball $B(x,d(x,x'))$, then $d(p,q') \leq K$, which implies that
$$d(p,q) \leq d(p,q')+d(q',q) \leq K +K+1= 2K+1 < 2(K+1) ,$$
contradicting our assumption. Therefore, $d(x,y') \geq d(x,x')$, which implies that 
$$d(x,y)=d(x,x')+d(x',y')+d(y',y) = d(x,p)+d(x',y')+d(q,y).$$
Because the projection onto $C$ is $1$-Lipschitz, we know that $d(x',y') \geq d(p',q')$. But 
$$d(p',q') \geq d(p,q)-d(p,p')-d(q,q') \geq d(p,q) - 2(K+1).$$
The inequality
$$d(x,y) \geq d(x,p)+d(p,q)+d(q,y)-2(K+1)$$
follows. Finally, the triangle inequality shows that $d(x,y) \leq d(x,p)+d(p,q)+d(q,y)$, which concludes the proof of our fact. 

\medskip \noindent
Now we are ready to prove our lemma. Let $x,y \in X$ be two vertices whose projections onto $C$ are at least at distance $4(K+1)$ apart. Let $p,q \in C$ denote the projections onto $C$ of $x,y$ respectively. By reproducing the beginning of the proof of the previous fact, one shows that there exists a unique pair of vertices $x',y' \in [x,y]$ satisfying $d(x,x')=d(x,C)$ and $d(y,y')=d(y,C)$, and that $d(x,y)=d(x,x')+d(x',y')+d(y',y)$. From the triangle inequality, we know that $d(x,y) \leq d(x,p)+d(p,q)+d(q,y)$, which implies together with the previous inequality, that $d(x',y') \leq d(p,q)$ since $d(x,x')=d(x,p)$ and $d(y,y')=d(y,q)$. 

\medskip \noindent
Let $p',q'$ denote the projections onto $C$ of $x',y'$ respectively. Notice that $d(p',q') \geq 2(K+1)$. Indeed, because the open ball $B(x,d(x,x'))$ is disjoint from $C$, necessarily $d(p,p') \leq K+1$; and similarly $d(q,q') \leq K+1$. Hence 
$$d(p',q') \geq d(p,q)-d(p,p')-d(q,q') \geq 2(K+1).$$
Therefore, our previous fact applies, so that
$$\begin{array}{lcl} d(p,q) & \geq & d(x',y') \geq d(x',p')+d(p',q')+d(q',y')-2(K+1) \\ \\ & \geq & d(x',p)+d(p,q)+d(q,y')-2(K+1) -2d(p,p')-2d(q,q') \\ \\ & \geq & d(x',p)+d(p,q)+d(q,y') - 6(K+1) \end{array}$$
hence $d(x',p) \leq 6(K+1)$. Consequently, $L=6(K+1)$ is the constant we are looking for, concluding the proof of our lemma. 
\end{proof}

\section{Formalism: special colorings}\label{section:formalism}

\noindent
In this section, we introduce the formalism which we will use in the next sections to prove the statements mentioned in the introduction. Our central definition is the following:

\begin{definition}\label{def:specialcolor}
Let $X$ be a cube complex. A \emph{special coloring} $(\Delta,\phi)$ is the data of a graph $\Delta$ and a coloring map 
$$\phi : V \left( \vec{\Delta} X \right) \to V(\Delta) \sqcup V(\Delta)^{-1},$$
where $\vec{\Delta}X$ denotes the oriented crossing graph (see Definition \ref{def:OCrossing}), satisfying the following conditions:
\begin{itemize}
	\item for every oriented hyperplane $J$, the equality $\phi(J^{-1})=\phi(J)^{-1}$ holds;
	\item two transverse oriented hyperplanes have adjacent colors;
	\item no two oriented hyperplanes adjacent to a given vertex have the same color;
	\item two oriented edges with same origin whose dual oriented hyperplanes have adjacent colors must generate a square. 
\end{itemize}
\end{definition}
\begin{figure}
\begin{center}
\includegraphics[trim={0, 19.5cm 23.5cm 0},clip,scale=0.43]{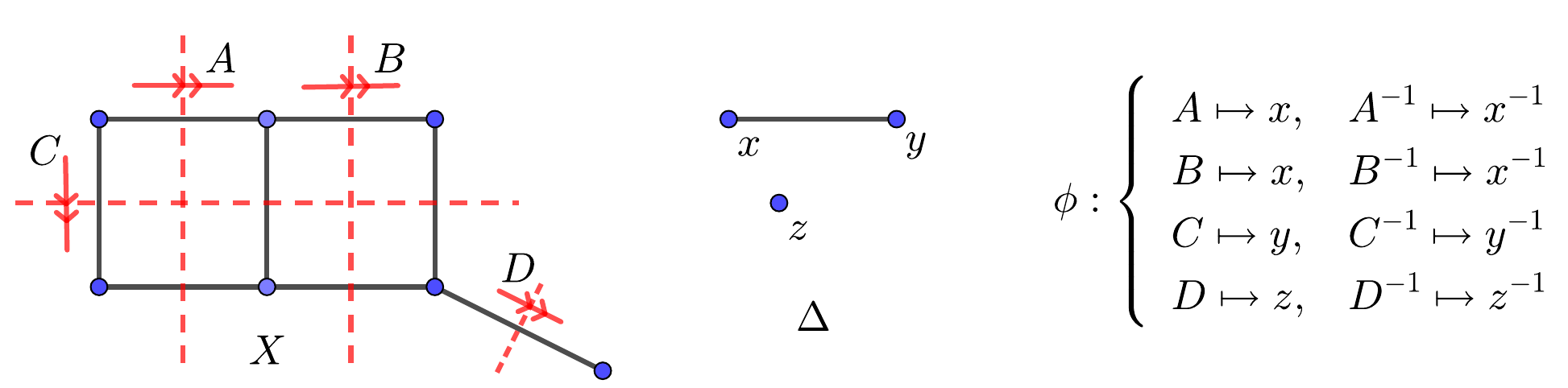}
\caption{Example of a special coloring.}
\label{SpecialColor}
\end{center}
\end{figure}

\noindent
See Figure \ref{SpecialColor} for an example. Essentially, a cube complex admits a special coloring if and only if it is special. We will not use this observation in this paper, so we only give sketch a proof at the end of this section. The main point to keep in mind is that any special cube complex admits at least one special coloring. 

\begin{lemma}\label{lem:specialcolorspecial}
A cube complex $X$ admits a special coloring if and only if there exists a special cube complex $Y$ such that $X^{(2)}=Y^{(2)}$.
\end{lemma}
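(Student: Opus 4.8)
The plan is to prove both implications of Lemma~\ref{lem:specialcolorspecial} by observing that the four axioms of a special coloring are, one by one, combinatorial reformulations of the four defects that a special cube complex must avoid (1-sidedness, self-intersection, self-osculation, inter-osculation), together with the fact that all of these defects are detected entirely within the $2$-skeleton.

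\textbf{The forward implication.} Suppose $X$ admits a special coloring $(\Delta,\phi)$. Since hyperplanes, squares, and all the pathological configurations are phenomena of the $2$-skeleton, I would first reduce to the statement that $X^{(2)}$ itself is (the $2$-skeleton of) a special cube complex; so it suffices to build a special cube complex $Y$ with $Y^{(2)}=X^{(2)}$ by coning off, i.e. filling in a cube for every collection of pairwise-transverse hyperplanes meeting at a vertex (the standard ``flag completion'' which does not alter the $2$-skeleton). The real content is to check that $X^{(2)}$ has no pathological hyperplane configuration. I would verify each in turn: two-sidedness follows from the first axiom, $\phi(J^{-1})=\phi(J)^{-1}$, since it forces $\phi(\vec J)\neq\phi(-\vec J)$ (a color and its inverse are distinct vertices of $\Delta\sqcup\Delta^{-1}$), whence $\vec J\neq-\vec J$. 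No self-intersection: two edges dual to the same hyperplane $J$ that are non-parallel sides of a square would give two transverse oriented hyperplanes with equal underlying hyperplane, hence (axiom two) a color adjacent to itself, i.e.\ a loop at $\phi(\vec J)$, contradicting that $\Delta$ is a simplicial graph; alternatively it yields two oriented hyperplanes at a vertex with the same color, contradicting axiom three. No self-osculation and no inter-osculation: in each case one has two oriented edges with a common endpoint whose dual oriented hyperplanes have adjacent (or equal) colors but which do not span a square, contradicting the fourth axiom directly.

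\textbf{The converse.} Suppose $Y$ is special with $X^{(2)}=Y^{(2)}$. The natural candidate coloring is the ``tautological'' one: take $\Delta=\Delta Y$ to be the crossing graph and let $\phi$ send each oriented hyperplane to itself, viewed as a vertex of $\Delta$ decorated with its orientation, so $\phi(J^{-1})=\phi(J)^{-1}$ holds by fiat. The first axiom is then immediate; the second holds because, by definition of the crossing graph, transverse hyperplanes are adjacent in $\Delta$; and the third and fourth axioms are precisely the negations of self-osculation and inter-osculation, which hold because $Y$ is special. One subtlety I would flag: I must check the coloring is well defined on $X$ using only $X^{(2)}=Y^{(2)}$, i.e.\ that the hyperplanes of $X$ and their transversality and osculation data agree with those of $Y$; this is exactly where I use that hyperplanes and squares live in the $2$-skeleton, so the two complexes have literally the same hyperplanes, the same transverse pairs, and the same osculation patterns.

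\textbf{Main obstacle.} The crux is the bookkeeping that every relevant notion is $2$-skeletal, so that ``$X^{(2)}=Y^{(2)}$'' transfers all hyperplane combinatorics between $X$ and $Y$, and the verification of axiom four against self- and inter-osculation in the converse, where I must be careful to distinguish the ``adjacent colors'' case (two distinct hyperplanes) from the ``equal colors'' case (one hyperplane osculating itself) and confirm the fourth axiom covers both. Since the paper itself says only a sketch is expected, I would present these verifications concisely rather than exhaustively, pointing to the figures and to Haglund--Wise's definitions for the routine geometric identifications.
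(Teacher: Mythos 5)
Your converse direction coincides with the paper's: the tautological coloring $(\Delta Y,\mathrm{id})$ on a special $Y$, transferred to $X$ via the observation that hyperplanes, transversality and osculation are all determined by the $2$-skeleton. The genuine gap is in your forward direction. You treat the cube completion (``fill in a cube for every collection of pairwise-transverse hyperplanes meeting at a vertex'') as a standard operation that automatically exists without altering the $2$-skeleton, and declare that ``the real content'' is ruling out the four pathological configurations. That mislocates the difficulty. For a general square complex the flag completion with prescribed $2$-skeleton need not exist: if three squares pairwise share edges at a vertex, flagness of the link demands a $3$-cube, and a $3$-cube can only be filled if its entire boundary (all six squares, the far vertex, etc.) is already present in $X^{(2)}$ --- adding missing squares is forbidden, since that would change the $2$-skeleton. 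So nonpositive curvature of the completed complex is a substantive assertion about $X$ that must be extracted from the coloring axioms: one needs that configurations of squares always close up into boundaries of cubes, which uses the fourth axiom together with the uniqueness of the edge at a vertex carrying a given color (third axiom). This is exactly where the paper puts its effort: it proves that $M(X,x)$, the $1$-skeleton of the universal cover described by legal words, is a median graph (by the quasi-median graph argument of \cite{Qm} and Mulder's characterisation), invokes the cubulation theorem for median graphs to produce a CAT(0) complex $\widetilde{Y}$ with $\widetilde{Y}^{(2)}=\widetilde{X}^{(2)}$, and then descends by the deck action to get $Y$. Your proposal contains no substitute for this step, and it is the heart of the lemma; the same issue appears at smaller scale for simpliciality of links (excluding two distinct squares sharing two consecutive edges), which again comes from axiom three rather than from the absence of the Haglund--Wise pathologies.

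There is also a local slip in your verification of the pathologies: you propose to kill self-osculation with axiom four and say you will ``confirm the fourth axiom covers both'' the adjacent-colors and equal-colors cases --- but it does not cover the latter. A vertex of $\Delta$ is not adjacent to itself, so axiom four is vacuous when the two dual oriented hyperplanes have the same color; it is axiom three (distinct oriented edges issuing from a vertex carry distinct colors) that forbids self-osculation with common initial point, and the common-terminal-point case follows by reversing orientations using axiom one. This matches the paper's final verification, where the third point handles self-intersection and self-osculation and the fourth handles inter-osculation. Those verifications of yours are otherwise in order; it is the nonpositive curvature of the completion that your argument is missing.
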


\noindent
From now on, we fix a (non necessarily compact) special cube complex $X$ endowed with a special coloring $(\Delta,\phi)$. 

\medskip \noindent
Given a vertex $x_0 \in X$, a word $w=J_1 \cdots J_r$, where $J_1, \ldots, J_r \in V(\Delta) \sqcup V(\Delta)^{-1}$ are colors, is \emph{$x_0$-legal} if there exists a path $\gamma$ in $X$ starting from $x_0$ such that the oriented hyperplanes it crosses have colors $J_1, \ldots, J_r$ respectively. We say that the path $\gamma$ \emph{represents} the word $w$. 

\begin{fact}\label{fact:uniquepath}
An $x_0$-legal word is represented by a unique path in $X$. 
\end{fact}

\begin{proof}
Let $\gamma$ be a path representing a given $x_0$-legal word $w=J_1 \cdots J_r$. By definition, the starting vertex of $\gamma$ must be $x_0$. Also, we know that there exists at most one edge containing $x_0$ which has color $J_1$. Such an edge exists since $w$ is $x_0$-legal by assumption. A fortiori, this edge must be the first edge of $\gamma$. The conclusion follows by iterating the argument to the next edges.
\end{proof}

\noindent
The previous fact allows us to define the \emph{terminus} of an $x_0$-legal word $w$, denoted by $t(w)$, as the ending point of the unique path representing $w$. 

\medskip \noindent
Set $\mathcal{L}(X) = \{ x \text{-legal words} \mid x \in X\}$ the set of all legal words. (If $x_1,x_2 \in X$ are two distinct points, we consider the empty $x_1$-legal and $x_2$-legal words as distinct.) We consider the equivalence relation $\sim$ on $\mathcal{L}(X)$ generated by the following transformations:
\begin{description}
	\item[(cancellation)] if a legal word contains $JJ^{-1}$ or $J^{-1}J$, remove this subword;
	\item[(insertion)] given a color $J$, insert $JJ^{-1}$ or $J^{-1}J$ as a subword of a legal word;
	\item[(commutation)] if a legal word contains $J_1J_2$ where $J_1,J_2$ are two adjacent colors, replace this subword with $J_2 J_1$. 
\end{description}
So two $x$-legal words $w_1,w_2$ are equivalent with respect to $\sim$ if there exists a sequence of $x$-legal words 
$$m_1=w_1, \ m_2, \ldots, m_{r-1}, \ m_r=w_2$$ 
such that $m_{i+1}$ is obtained from $m_i$ by a cancellation, an insertion or a commutation for every $1 \leq i \leq r-1$. Define $\mathcal{D}(X)= \mathcal{L}(X)/ \sim$ as a set of \emph{diagrams}. The following observation allows us (in particular) to define the \emph{terminus} of a diagram as the terminus of one of the legal words representing it.

\begin{fact}\label{fact:transformations}
Let $w'$ be an $x_0$-legal word obtained from another $x_0$-legal word $w$ by a cancellation / an insertion / a commutation. If $\gamma',\gamma$ are paths representing $w',w$ respectively, then $\gamma'$ is obtained from $\gamma$ by removing a backtrack / adding a backtrack / flipping a square. 
\end{fact}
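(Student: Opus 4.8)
The plan is to treat the three types of word transformation separately and, in each case, to exploit the uniqueness statement of Fact~\ref{fact:uniquepath}: since an $x_0$-legal word is represented by a \emph{unique} path, it suffices to produce \emph{some} path that represents $w'$ and is obtained from $\gamma$ by the advertised elementary move; this path then automatically coincides with $\gamma'$. Moreover, insertion is the exact inverse of cancellation, and adding a backtrack is the inverse of removing one, so the insertion case follows formally from the cancellation case by reading the argument backwards. Thus only commutation and cancellation require genuine work.

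For the \textbf{commutation} case, write the oriented edges of $\gamma$ as $f_1 \cdots f_r$, where $f_k$ is dual to an oriented hyperplane of color $J_k$, and suppose the transformation swaps the adjacent colors $J_i, J_{i+1}$. Let $v'$ be the common vertex of $f_i$ and $f_{i+1}$, so that $f_i^{-1}$ and $f_{i+1}$ are two oriented edges with origin $v'$. Their dual oriented hyperplanes have colors $J_i^{-1}$ and $J_{i+1}$, which are adjacent because adjacency of colors is insensitive to orientation. The fourth axiom of Definition~\ref{def:specialcolor} then forces $f_i^{-1}$ and $f_{i+1}$ to span a square $S$. Flipping $\gamma$ across $S$ replaces the subword $f_i f_{i+1}$ by the two opposite sides, traversed as (opposite of $f_{i+1}$) followed by (opposite of $f_i$); I would then check, using two-sidedness and the compatibility of $\phi$ with orientation reversal (the first axiom of Definition~\ref{def:specialcolor}), that these opposite sides carry colors $J_{i+1}$ and $J_i$ respectively, so that the flipped path represents $w'$.

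For the \textbf{cancellation} case, suppose $w$ contains $J J^{-1}$ at positions $i, i+1$, and again let $v'$ be the common vertex of $f_i$ and $f_{i+1}$. Now both $f_i^{-1}$ and $f_{i+1}$ are oriented edges with origin $v'$ and color $J^{-1}$. Since there is at most one oriented edge at a given vertex of a given color---the uniqueness already invoked in the proof of Fact~\ref{fact:uniquepath}, which comes from the third axiom of Definition~\ref{def:specialcolor} together with the absence of self-osculating hyperplanes---we conclude $f_{i+1} = f_i^{-1}$. Hence $\gamma$ contains the backtrack $f_i f_i^{-1}$, whose removal yields a path representing $w'$. The symmetric subword $J^{-1} J$ is handled identically.

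The main obstacle I anticipate is not the existence of the square or of the backtrack, which the axioms supply directly, but the orientation bookkeeping in the commutation case: one must verify that after flipping the square the two new oriented edges are coloured exactly $J_{i+1}$ and $J_i$ rather than their inverses, which is precisely where two-sidedness and the first axiom of Definition~\ref{def:specialcolor} enter. Everything else reduces to combining these local facts with the global uniqueness of the representing path.
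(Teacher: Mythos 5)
Your proposal is correct and takes essentially the same route as the paper: cancellation is handled by the at-most-one-edge-per-color uniqueness to force $f_{i+1}=f_i^{-1}$, insertion follows by symmetry, commutation uses the fourth axiom of Definition~\ref{def:specialcolor} to produce a square that is flipped, and Fact~\ref{fact:uniquepath} identifies the resulting path with $\gamma'$ in every case. The only cosmetic difference is that you obtain the square at the middle vertex from the consecutive edges $f_i^{-1}$ and $f_{i+1}$ of $\gamma$, whereas the paper obtains it at the initial vertex of the swapped subword from the legality of both $w$ and $w'$; your remaining ``check'' that opposite sides of the square carry the colors $J_{i+1}$ and $J_i$ is immediate since coherently oriented parallel sides of a square are dual to the same oriented hyperplane.
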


\begin{proof}
Suppose that $w'$ is obtained from $w$ by a cancellation. So we can write
$$w=J_1 \cdots J_r J J^{-1} J_{r+1} \cdots J_{r+s}$$ 
and $w'=J_1 \cdots J_{r+s}$ for some colors $J,J_1, \ldots, J_{r+s}$. Let $\gamma_1$ denote the subpath of $\gamma$ representing the $x_0$-legal word $J_1 \cdots J_r$. Let $e$ denote the oriented edge following $\gamma_1$ along $\gamma$. So the oriented hyperplane dual to $e$ has color $J$. After $e$, the oriented edge of $\gamma$ must be dual to an oriented hyperplane with color $J^{-1}$, as $e^{-1}$. But, because there exists at most one edge containing the terminus of $e$ which is dual to such a hyperplane, the only possibility is that $\gamma_1ee^{-1}$ is the subpath of $\gamma$ representing $J_1 \cdots J_r JJ^{-1}$. Now, if $\gamma_2$ denotes the subpath of $\gamma$ representing the $x_1$-legal word $J_{r+1} \cdots J_{r+s}$, where $x_1$ is the terminus of $\gamma_1$, then clearly $\gamma_1 \gamma_2$ represents $J_1 \cdots J_{r+s}$. It follows from Fact \ref{fact:uniquepath} that $\gamma'=\gamma_1 \gamma_2$. Consequently, $\gamma'$ is obtained from $\gamma$ by removing a backtrack. 

\medskip \noindent
By symmetry, we deduce that, if $w'$ is obtained from $w$ by an insertion, then $\gamma'$ is obtained from $\gamma$ by adding a backtrack.

\medskip \noindent
Now, suppose that $w'$ is obtained from $w$ by a commutation. So we can write
$$w=J_1 \cdots J_r H V J_{r+1} \cdots J_{r+s} \ \text{and} \ w'= J_1 \cdots J_r \cdots VH J_{r+1} \cdots J_{r+s},$$
for some colors $H,V,J_1, \ldots, J_{r+s}$ with $H$ and $V$ adjacent. Let $\gamma_1$ denote the subpath of $\gamma$ representing the $x_0$-legal word $J_1 \cdots J_r \subset w$. As a consequence of Fact \ref{fact:uniquepath}, this is also the subpath of $\gamma'$ representing the $x_0$-legal word $J_1 \cdots J_r \subset w'$. Let $x_1$ denote the terminus of $\gamma_1$. Because $w$ and $w'$ are both legal, we know that there exist two hyperplanes $A$ and $B$ which adjacent to $x_1$ and which have colors $H$ and $V$ respectively. Since $H$ and $V$ are adjacent colors, it follows that $x_1$ is the corner of a square $Q$ whose dual hyperplanes are precisely $A$ and $B$. If $x_2$ denotes the vertex of $Q$ opposite to $x_1$, let $\ell_1$ denote the path of length two in $Q$ from $x_1$ to $x_2$ passing through $A$ and then through $B$, and similarly $\ell_2$ the path of length two in $Q$ from $x_1$ to $x_2$ passing through $B$ and then through $A$. According to Fact \ref{fact:uniquepath}, $\ell_1$ (resp. $\ell_2$) is the unique $x_1$-legal word representing $HV$ (resp. $VH$), so $\gamma_1 \ell_1$ (resp. $\gamma_1 \ell_2$) must be the subpath of $\gamma$ (resp. $\gamma'$) representing $J_1 \cdots J_r HV$ (resp. $J_1 \cdots J_r VH$). Now, let $\gamma_2$ denote the subpath of $\gamma$ representing the $x_2$-legal word $J_{r+1} \cdots J_{r+s} \subset w$; this is also the subpath of $\gamma'$ representing the $x_2$-legal word $J_{r+1} \cdots J_{r+s} \subset w'$. Because $\gamma_1 \ell_1 \gamma_2$ represents $w$ by construction, it follows from Fact \ref{fact:uniquepath} that $\gamma = \gamma_1 \ell_1 \gamma_2$. Similarly, $\gamma'= \gamma_1 \ell_2 \gamma_2$. The conclusion follows since $\ell_2$ is obtained from $\ell_1$ by flipping the square $Q$. 
\end{proof}

\noindent
In the sequel, an \emph{$(x,y)$-diagram} will refer to a diagram represented by an $x$-legal word with terminus $y$, or just an \emph{$(x,\ast)$-diagram} if we do not want to mention its terminus. A diagram which is an $(x,x)$-diagram for some $x \in X$ is \emph{spherical}. 

\medskip \noindent
If $w$ is an $x_0$-legal word and $w'$ a $t(w)$-legal word, we define the \emph{concatenation} $w \cdot w'$ as the word $ww'$, which $x_0$-legal since it is represented by the concatenation $\gamma \gamma'$ where $\gamma, \gamma'$ represent respectively $w,w'$. Because we have the natural identifications
\begin{table}[h]
	\centering
		\begin{tabular}{cccc}
			$\mathcal{L}(X)$ & $\leftrightarrow$ & paths in $X$ & (Fact \ref{fact:uniquepath}) \\ 
			$\sim$ & $\leftrightarrow$ & homotopy with fixed endpoints & (Fact \ref{fact:transformations}, Proposition \ref{prop:cubehomotopy}) 
		\end{tabular}
\end{table}
\newpage
\noindent
it follows that the concatenation in $\mathcal{L}(X)$ induces a well-defined operation in $\mathcal{D}(X)$, making $\mathcal{D}(X)$ isomorphic to the fundamental groupoid of $X$. As a consequence, if we denote by $M(X)$ the Cayley graph of the groupoid $\mathcal{D}(X)$ with respect to the generating set $V(\Delta) \sqcup V(\Delta)^{-1}$, and, for every $x \in X$, $M(X,x)$ the connected component of $M(X)$ containing the trivial path $\epsilon(x)$ based at $x$, and $\mathcal{D}(X,x)$ the vertex-group of $\mathcal{D}(X)$ based at $\epsilon(x)$, then the previous identifications induce the identifications
\begin{table}[h]
	\centering
		\begin{tabular}{ccc}
			$\mathcal{D}(X)$ & $\leftrightarrow$ & fundamental groupoid of $X$  \\ 
			$\mathcal{D}(X,x)$ & $\leftrightarrow$ & $\pi_1(X,x)$  \\ 
			$M(X,x)$ & $\leftrightarrow$ & universal cover $\widetilde{X}^{(1)}$  \\ 
		\end{tabular}
\end{table}

\noindent
More explicitly, $\mathcal{D}(X,x)$ is the group of $(x,x)$-diagrams endowed with the concatenation, and $M(X,x)$ is the graph whose vertices are the $(x,\ast)$-diagrams and whose edges link two diagrams $w_1$ and $w_2$ if there exists some color $J$ such that $w_2=w_1J$. 

\medskip \noindent 
The identification between $M(X,x)$ and $\widetilde{X}^{(1)}$ will be fundamental in the sequel, so below is a more precise description of it. 
\begin{table}[h!]
	\centering
		\begin{tabular}{l}
			\hspace{5cm} $M(X,x) \longleftrightarrow \left( \widetilde{X}, \widetilde{x} \right)$ \\ \\ 
			\begin{tabular}{c} $(x,\ast)$-diagram represented \\ by an $x$-legal word $w$ \end{tabular}  $\mapsto$ \begin{tabular}{c} path $\gamma \subset X$ \\ representing $w$ \end{tabular}  $\mapsto$  \begin{tabular}{c} lift $\widetilde{\gamma} \subset \widetilde{X}$ of $\gamma$ \\ starting from $\widetilde{x}$ \end{tabular} $\mapsto$ \begin{tabular}{c} ending \\ point of $\widetilde{\gamma}$ \end{tabular} \\ \\
			\begin{tabular}{c} $(x,\ast)$-diagram represented by the \\ $x$-legal word corresponding to $\gamma$ \end{tabular} $\mapsfrom$ \begin{tabular}{c} image \\ $\gamma \subset X$ of $\gamma$ \end{tabular} $\mapsfrom$ \begin{tabular}{c} path $\widetilde{\gamma} \subset \widetilde{X}$ \\ from $\widetilde{x}$ to $y$ \end{tabular} $\mapsfrom$ $y$
		\end{tabular}
\end{table}

\noindent
It is worth noticing that, if we color the oriented edges of $X$ as their dual hyperplanes, then the (oriented) edges of $\widetilde{X}$ are naturally labelled by colors and vertices of $\Delta$, just by considering their images in $X$. A consequence of the previous paragraph is that the generator labelling a given oriented edge of $M(X,x)$ is the same as the color labelling the corresponding edge of $\widetilde{X}$. Below are two observations which we record for future use.

\begin{lemma}\label{lem:labeledge}
Two oriented edges of $\widetilde{X}$ dual to the same oriented hyperplane are labelled by the same color.
\end{lemma}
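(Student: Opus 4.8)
The statement to prove is Lemma~\ref{lem:labeledge}: two oriented edges of $\widetilde{X}$ dual to the same oriented hyperplane carry the same color. The plan is to reduce the claim about edges in the universal cover $\widetilde{X}$ to the corresponding claim downstairs in $X$, and then to exploit the very definition of a special coloring together with the covering-space structure. First I would recall that the coloring of oriented edges of $\widetilde{X}$ is \emph{defined} by pushing forward to $X$: an oriented edge $\tilde{e}$ of $\widetilde{X}$ is colored by $\phi(\vec{J})$, where $\vec{J}$ is the oriented hyperplane of $X$ dual to the image of $\tilde{e}$ under the covering map $p : \widetilde{X} \to X$. Thus the color of $\tilde{e}$ depends only on the oriented hyperplane of $X$ dual to $p(\tilde{e})$.

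The key step is then the following observation about the covering map. If $\tilde{e}_1$ and $\tilde{e}_2$ are two oriented edges of $\widetilde{X}$ dual to the same oriented hyperplane $\vec{K}$ of $\widetilde{X}$, I would argue that their images $p(\tilde{e}_1)$ and $p(\tilde{e}_2)$ are dual to the same oriented hyperplane of $X$. This is because the covering map $p$ sends hyperplanes of $\widetilde{X}$ to hyperplanes of $X$ (two $\widetilde{X}$-edges that are opposite sides of a square map to two $X$-edges that are opposite sides of a square, so the square-parallelism equivalence relation defining hyperplanes is preserved and descends), and it respects orientations once we have fixed the labelling conventions set up earlier in the section. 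Hence $\vec{K}$ projects to a single well-defined oriented hyperplane $\vec{J} = p_*(\vec{K})$ of $X$, and both $p(\tilde{e}_1)$ and $p(\tilde{e}_2)$ are dual to this same $\vec{J}$. Combining with the previous paragraph, $\tilde{e}_1$ and $\tilde{e}_2$ both receive the color $\phi(\vec{J})$, so they are colored identically.

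I expect the main (and essentially only) obstacle to be bookkeeping about how the covering map interacts with the hyperplane equivalence relation and with orientations: one must verify that parallel edges in squares of $\widetilde{X}$ project to parallel edges in squares of $X$, so that the hyperplane of $\widetilde{X}$ containing $\tilde{e}_1$ and $\tilde{e}_2$ maps to a single hyperplane of $X$, and that the chosen orientation is transported consistently. This is a standard fact about covering maps of nonpositively-curved cube complexes and follows directly from the fact that $p$ is a local isometry carrying cubes to cubes; given the identification of $M(X,x)$ with $\widetilde{X}^{(1)}$ and the remark immediately preceding the lemma (that the generator labelling an oriented edge of $M(X,x)$ matches the color labelling the corresponding edge of $\widetilde{X}$), no genuinely new ideas are needed, and the proof should be short.
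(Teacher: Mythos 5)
Your proposal is correct and follows the same route as the paper's own (two-line) proof: project the two edges down to $X$, note that the covering map sends edges dual to one hyperplane of $\widetilde{X}$ to edges dual to one hyperplane of $X$, and conclude since the coloring of edges of $\widetilde{X}$ is by definition pulled back from the colors of their dual oriented hyperplanes in $X$. The extra bookkeeping you flag (parallelism in squares and orientations descending under the covering map) is exactly the content the paper leaves implicit, so no changes are needed.
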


\begin{proof}
The images in $X$ of two such edges of $\widetilde{X}$ are dual to the same hyperplane as well, so that they must be labelled by the same color. The conclusion follows. 
\end{proof}

\begin{lemma}\label{lem:transverseadjacent}
If the hyperplanes dual to two given edges of $\widetilde{X}$ are transverse, then these edges are labelled by adjacent colors. 
\end{lemma}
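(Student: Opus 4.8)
The plan is to reduce the statement to the case of two edges lying in a common square, and then transport that square down to $X$, where the defining axioms of a special coloring apply directly. Let $e$ and $f$ be edges of $\widetilde{X}$ whose dual hyperplanes $\widetilde{J}$ and $\widetilde{H}$ are transverse. Since adjacency of two colors in $\Delta$ depends only on their underlying vertices, and since $\phi(J^{-1})=\phi(J)^{-1}$, the truth of the conclusion does not depend on how $e$ and $f$ are oriented; so I may orient them as convenient. By the definition of transversality, there is a square $Q$ in $\widetilde{X}$ two of whose sides are dual to $\widetilde{J}$ and the other two to $\widetilde{H}$. I would then pick two adjacent sides $e'$ and $f'$ of $Q$, dual to $\widetilde{J}$ and $\widetilde{H}$ respectively. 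By Lemma \ref{lem:labeledge}, $e$ and $e'$ carry the same color, and $f$ and $f'$ carry the same color, so it suffices to show that $e'$ and $f'$ are labelled by adjacent colors.

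Next I would push the square $Q$ forward to $X$. The covering projection $\widetilde{X} \to X$ is a combinatorial local isomorphism, so it maps $Q$ isomorphically onto a genuine square $\overline{Q}$ of $X$, and it maps $\widetilde{J},\widetilde{H}$ into the hyperplanes $\overline{J},\overline{H}$ of $X$ dual to the two pairs of opposite sides of $\overline{Q}$. By construction, the labels of $e'$ and $f'$ are exactly the colors $\phi(\overline{J})$ and $\phi(\overline{H})$ assigned by the special coloring to suitably oriented $\overline{J}$ and $\overline{H}$. The crucial point is that $\overline{J}$ and $\overline{H}$ are transverse in $X$: they both cross the square $\overline{Q}$, so they are either transverse or equal, and equality is impossible, since it would exhibit $\overline{J}$ as a self-intersecting hyperplane, which is forbidden because $X$ is special.

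Once the transversality of $\overline{J}$ and $\overline{H}$ is established, the second axiom in the definition of a special coloring yields at once that their colors are adjacent, and hence so are the labels of $e'$ and $f'$, and therefore those of $e$ and $f$. I expect the only delicate point to be the appeal to speciality in the previous step: without the no-self-intersection hypothesis, two transverse hyperplanes upstairs could collapse to a single self-crossing hyperplane downstairs, and the axiom governing transverse hyperplanes would not apply. Everything else is routine bookkeeping about the covering map and about the orientation-independence of adjacency, for which Lemma \ref{lem:labeledge} does the necessary work.
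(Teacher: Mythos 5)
Your proof is correct and takes essentially the same route as the paper's two-line argument: project everything down to $X$, observe that transversality descends to the images of the two hyperplanes, and apply the second axiom of a special coloring (with Lemma \ref{lem:labeledge} handling the bookkeeping of labels). The only difference is that you spell out the point the paper leaves implicit, namely that the two images cannot coincide, since a hyperplane dual to two non-parallel sides of a square would be self-intersecting, which is excluded by the standing assumption in Section \ref{section:formalism} that $X$ is special.
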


\begin{proof}
The images in $X$ of these two hyperplanes of $\widetilde{X}$ must be transverse as well, so they are labelled by adjacent colors. The conclusion follows. 
\end{proof}

\noindent
A diagram may be represented by several legal words. Such a word is \emph{reduced} if it has minimal length, ie., it cannot be shortened by applying a sequence of cancellations, insertions and commutation. It is worth noticing that, in general, a diagram is not represented by a unique reduced legal word, but two such reduced words differ only by some commutations. (For instance, consider the homotopically trivial loop defined by the paths representing two of our reduced legal words, consider a disc diagram of minimal area bounded by this loop, and follow the proof of \cite[Theorem 4.6]{MR1347406}. Alternatively, use the embedding proved in the next section (which does not use the present discussion) and conclude by applying the analogous statement which holds in right-angled Artin groups.) As a consequence, we can define the \emph{length} of a diagram as the length of any reduced legal word representing it. It is worth noticing that our length coincides with the length which is associated to the generating set $V(\Delta) \sqcup V(\Delta)^{-1}$ in the groupoid $\mathcal{D}(X)$. The next lemma follows from this observation.

\begin{lemma}
Let $D_1,D_2 \in M(X,x)$ be two $(x,\ast)$-diagrams. If $J_1 \cdots J_n$ is a reduced legal word representing $D_1^{-1}D_2$, then 
$$D_1, \ D_1J_1, \ D_1J_1J_2, \ldots, \ D_1J_1 \cdots J_n$$
is a geodesic from $D_1$ to $D_2$ in $M(X,x)$. Conversely, any geodesic between $D_1$ and $D_2$ arises in this way.
\end{lemma}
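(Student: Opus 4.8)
The plan is to identify combinatorial paths in $M(X,x)$ with words over the alphabet $V(\Delta) \sqcup V(\Delta)^{-1}$ that represent $D_1^{-1}D_2$, and then to invoke the fact, recorded just before the statement, that the length of a diagram coincides with its word length in the groupoid $\mathcal{D}(X)$ relative to this generating set. The whole lemma is then the standard translation between word length and distance in a Cayley graph, adapted to the groupoid setting.

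First I would unwind the definition of $M(X,x)$. By construction, two $(x,\ast)$-diagrams are joined by an edge precisely when one is obtained from the other by right-concatenating a single color; since colors come in inverse pairs, these edges are symmetric. Consequently, giving an edge-path $D_1 = V_0, V_1, \ldots, V_\ell = D_2$ in $M(X,x)$ amounts to giving colors $K_1, \ldots, K_\ell \in V(\Delta) \sqcup V(\Delta)^{-1}$ with $V_i = V_{i-1}K_i$ for every $i$. Reading off these colors yields a word $K_1 \cdots K_\ell$; since each intermediate vertex $V_i$ is a genuine $(x,\ast)$-diagram, every partial concatenation is defined and $K_1 \cdots K_\ell$ is a legal word. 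It satisfies $D_1 \cdot (K_1 \cdots K_\ell) = D_2$, i.e.\ $K_1 \cdots K_\ell$ represents the diagram $D_1^{-1}D_2$, and its length equals the length $\ell$ of the path.

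This correspondence shows that the distance $d_{M(X,x)}(D_1, D_2)$ is exactly the minimal length of a legal word representing $D_1^{-1}D_2$, which is by definition the \emph{length} of the diagram $D_1^{-1}D_2$. Since $J_1 \cdots J_n$ is a reduced legal word for $D_1^{-1}D_2$, this common value equals $n$. The explicit sequence $D_1, D_1J_1, \ldots, D_1J_1\cdots J_n = D_2$ is an edge-path of length $n$, hence realises the distance and is a geodesic; this proves the first assertion.

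For the converse, I would take an arbitrary geodesic from $D_1$ to $D_2$. It has length $d_{M(X,x)}(D_1,D_2) = n$, and reading off its colors as above produces a legal word $K_1 \cdots K_n$ of length $n$ representing $D_1^{-1}D_2$. Because no legal word representing $D_1^{-1}D_2$ is shorter than $n$, this word is of minimal length, hence reduced, and the geodesic is precisely $D_1, D_1K_1, \ldots, D_1K_1\cdots K_n$. Thus every geodesic arises from a reduced legal word in the stated way. I do not expect any genuine obstacle here: the only points requiring care are the groupoid bookkeeping (matching sources and targets so that $D_1^{-1}D_2$ is defined and equals the word read along the path) and the observation that legality of the word read along any path is automatic, since the intermediate vertices are themselves diagrams.
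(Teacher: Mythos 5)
Your proposal is correct and is exactly the argument the paper has in mind: the paper derives the lemma directly from the observation that $M(X)$ is the Cayley graph of the groupoid $\mathcal{D}(X)$ with respect to $V(\Delta) \sqcup V(\Delta)^{-1}$ and that the length of a diagram equals its word length for this generating set, which is precisely the path--word correspondence you spell out. Your additional care about legality along paths and the groupoid bookkeeping just makes explicit what the paper leaves implicit.
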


\noindent
We end this section with a sketch of proof of Lemma \ref{lem:specialcolorspecial}, as promised. 

\begin{proof}[Sketch of proof of Lemma \ref{lem:specialcolorspecial}.]
Let $Y$ be a special cube complex. Since its hyperplanes are two-sided, we can orient them in order to identify $V ( \vec{\Delta} Y)$ with $V(\Delta Y) \sqcup V(\Delta Y)^{-1}$. We claim that $(\Delta Y, \mathrm{id})$ is a special coloring of $Y$. The first two points in the definition of a special coloring are clearly satisfied; the third point is satisfied since $Y$ does not contain self-intersecting and self-osculating hyperplanes; and the last one follows from the fact that $Y$ does not contain inter-osculating hyperplanes. Now, if $X^{(2)}=Y^{(2)}$, then the hyperplanes of $X$ can be naturally identified with those of $Y$, so that a special coloring of $Y$ induces a special coloring of $X$. 

\medskip \noindent
Conversely, suppose that $X$ admits a special coloring $(\Delta, \phi)$. From now on, we fix a basepoint $x \in X$ and we identify the one-skeleton of the universal cover $\widetilde{X}$ of $X$ with the graph $M(X,x)$. 

\medskip \noindent
We claim that $M(X,x)$ is a median graph. One possibility is to show that $M(X,x)$ is a triangle-free quasi-median graph by following the proof of \cite[Proposition 8.1]{Qm}. The conclusion follows from \cite[(25) p. 149]{Mulder}. One may notice that the median point of three $(x,\ast)$-diagrams $D_1,D_2,D_3$ is $D_1M$ where $M$ is the maximal common prefix of $D_1^{-1} D_2$ and $D_1^{-1} D_3$, ie., an $(t(D_1),\ast)$-diagram of maximal length appearing as a common prefix in some reduced legal words representing $D_1^{-1}D_2$ and $D_1^{-1}D_3$. 

\medskip \noindent
Therefore, it follows from \cite[Theorem 6.1]{mediangraphs} that filling in the cubes of $M(X,x)$ produces a CAT(0) cube complex $\widetilde{Y}$. Since $\widetilde{X}$ is obtained from (its one-skeleton) $M(X,x)$ by filling in some cubes of $M(X,x)$, necessarily $\widetilde{Y}$ is also obtained from $\widetilde{X}$ by filling in its cubes. As a consequence, the action $\pi_1(X) \curvearrowright \widetilde{X}$ extends to an action $\pi_1(X) \curvearrowright \widetilde{Y}$, so that $Y = \widetilde{Y} / \pi_1(X)$ is obtained from $X= \widetilde{X}/ \pi_1(X)$ by adding some cubes. On the other hand, since $\widetilde{X}$ is simply connected, any cycle of length four in its one-skeleton must bound a square, so that $\widetilde{Y}^{(2)}= \widetilde{X}^{(2)}$, and a fortiori $Y^{(2)}=X^{(2)}$. 

\medskip \noindent
So the remaining point to show is that $Y$ is special. Notice that one can naturally identify the hyperplanes of $Y$ with those of $X$ since $Y^{(2)}= X^{(2)}$, so that the special coloring of $X$ induces a special coloring of $Y$. The first point in the definition of a special coloring implies that the hyperplanes of $Y$ are two-sided (since the two orientations of a given hyperplanes have different colors); the third point implies that $Y$ does not contain self-intersecting and self-osculating hyperplanes; and the fourth point implies that it does not contain inter-osculating hyperplanes. Consequently, $Y$ is indeed a special cube complex. 
\end{proof}

\section{Warm up: a few applications}\label{section:warmup}

\subsection{Embedding in a right-angled Artin group}\label{section:embedRAAG}

\noindent
This section is dedicated to the proof of the following statement.  

\begin{thm}\label{thm:embedRAAG}
Let $X$ be a special cube complex and $x \in X$ a basepoint. If $(\phi,\Delta)$ is a special coloring of $X$, then the canonical map $\mathcal{D}(X,x) \to A(\Delta)$ induces an injective morphism. In particular, the fundamental group of $X$ embeds in the right-angled Artin group $A(\Delta)$. 
\end{thm}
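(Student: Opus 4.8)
The plan is to show that sending each color $J \in V(\Delta) \sqcup V(\Delta)^{-1}$ to the corresponding generator of $A(\Delta)$ (or its inverse) descends to a well-defined group homomorphism $\Phi : \mathcal{D}(X,x) \to A(\Delta)$, and then to prove that $\Phi$ is injective by matching the reduction theory of legal words against the solution of the word problem in a right-angled Artin group. Well-definedness is the easy half: an element of $\mathcal{D}(X,x)$ is an equivalence class of $x$-legal words under the three elementary transformations, and each of these maps to a relation valid in $A(\Delta)$. Indeed, a cancellation or insertion of $JJ^{-1}$ (or $J^{-1}J$) corresponds to the trivial relation $JJ^{-1}=1$, while a commutation replaces $J_1J_2$ by $J_2J_1$ only when $J_1,J_2$ are adjacent colors, and adjacent generators commute in $A(\Delta)$ by definition. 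Hence the image in $A(\Delta)$ does not depend on the chosen representative, and since concatenation of diagrams corresponds to multiplication of words, $\Phi$ is a homomorphism.

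For injectivity, let $w=J_1\cdots J_r$ be a \emph{reduced} $x$-legal word representing a spherical diagram $D$ with $\Phi(D)=1$; the goal is to force $w$ to be empty. Here I would invoke the standard solution to the word problem in $A(\Delta)$: a word in the generators and their inverses represents the identity if and only if it can be carried to the empty word by repeatedly swapping adjacent letters whose underlying generators are adjacent in $\Delta$ and deleting adjacent pairs $aa^{-1}$. In particular, if $w$ is nonempty and $\Phi(D)=1$, there is a \emph{cancelling pair}: indices $i<j$ with $J_j=J_i^{-1}$ such that every $J_k$ with $i<k<j$ commutes with $J_i$ in $A(\Delta)$. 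I would then choose such a pair with $j-i$ minimal and observe that no intermediate letter $J_k$ can share the underlying vertex of $J_i$: if $J_k=J_i^{-1}$ then $(i,k)$ is a closer cancelling pair, and if $J_k=J_i$ then $(k,j)$ is, each contradicting minimality. Consequently every intermediate $J_k$ has underlying vertex genuinely adjacent to that of $J_i$, i.e. $J_i$ and $J_k$ are \emph{adjacent colors}.

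This adjacency is exactly what is needed to transport the argument into the world of legal words. Since $J_i$ is an adjacent color to each of $J_{i+1},\ldots,J_{j-1}$, I can apply the commutation transformation repeatedly to slide $J_i$ rightward past these letters; each such move keeps the word $x$-legal and $\sim$-equivalent to $w$. This yields a legal word equivalent to $w$ in which $J_i$ stands immediately to the left of $J_j=J_i^{-1}$, so a cancellation shortens it, contradicting the hypothesis that $w$ is reduced. Therefore $w$ is empty, $D$ is trivial, and $\ker\Phi$ is trivial. The final assertion then follows from the identification $\mathcal{D}(X,x)\cong\pi_1(X,x)$ established in Section \ref{section:formalism}.

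The main obstacle is precisely the discrepancy between the moves permitted on legal words and those permitted in $A(\Delta)$: in $A(\Delta)$ one may freely shuffle two letters carrying the same generator (powers of one generator always commute), whereas the commutation transformation on legal words is allowed only for \emph{distinct adjacent} colors. The minimal choice of cancelling pair is exactly the device that eliminates same-color intermediate letters and thereby makes the abstract right-angled Artin shuffles realizable as genuine legal-word commutations. Verifying that the cited word-problem criterion produces a cancelling pair of this precise form, and checking that the minimality bookkeeping is airtight, is the delicate point; the remaining steps — well-definedness and the execution of a single cancellation — are routine given the formalism already developed.
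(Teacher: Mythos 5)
Your proof is correct and follows essentially the same route as the paper: well-definedness by invariance of the coloring map under the three elementary moves, and injectivity by playing the solution of the word problem in $A(\Delta)$ against the reduction theory of legal words --- your explicit minimal cancelling-pair induction (including the minimality trick that rules out same-generator intermediate letters) is an unfolded version of the paper's appeal to the normal form in right-angled Artin groups in Fact \ref{fact:diagvsRAAG}. The one step you assert rather than prove, namely that each commutation sliding $J_i$ rightward keeps the word $x$-legal, is precisely the paper's preliminary Lemma \ref{lem:commutationlegal}, which is not automatic but follows from the fourth axiom of special colorings (two edges at a common vertex whose dual hyperplanes have adjacent colors span a square), so your argument is complete once that lemma is supplied.
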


\noindent
We begin by proving the following preliminary lemma.

\begin{lemma}\label{lem:commutationlegal}
If $w'$ is a word obtained from an $x$-legal word $w$ by a commutation or a cancellation, then $w'$ is again an $x$-legal word.
\end{lemma}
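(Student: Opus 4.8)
The plan is to treat the two kinds of transformation separately, constructing in each case an explicit path in $X$ that witnesses the legality of $w'$. The crucial point is that, unlike in Fact \ref{fact:transformations}, I may \emph{not} assume in advance that $w'$ is legal: legality is precisely what I want to produce, so I must build the representing path by hand from the one representing $w$, using only the axioms of a special coloring.

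For the cancellation case, I would write $w = J_1 \cdots J_r\, J J^{-1}\, J_{r+1} \cdots J_{r+s}$ (the case $J^{-1}J$ being symmetric) and let $\gamma$ be a path representing $w$, with $\gamma_1$ the initial subpath representing $J_1 \cdots J_r$ and ending at a vertex $x_1$. The edge $e$ of $\gamma$ following $\gamma_1$ is dual to a hyperplane of color $J$, and the next edge is dual to a hyperplane of color $J^{-1}$ issuing from the terminus of $e$. Since by the third axiom of Definition \ref{def:specialcolor} there is at most one edge at a given vertex dual to a hyperplane of a prescribed color, and $e^{-1}$ is such an edge, this next edge must be $e^{-1}$. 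Hence $\gamma$ returns to $x_1$ after reading $JJ^{-1}$, and concatenating $\gamma_1$ with the subpath $\gamma_2$ of $\gamma$ representing $J_{r+1}\cdots J_{r+s}$ (which also starts at $x_1$) yields a path based at $x$ representing $w'$. Thus $w'$ is $x$-legal.

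The commutation case is the main obstacle, since here I genuinely need the fourth axiom to manufacture a square. I would write $w = J_1 \cdots J_r\, H V\, J_{r+1}\cdots J_{r+s}$ with $H,V$ adjacent, and take $\gamma,\gamma_1,x_1$ as before, with $e_H$ (from $x_1$ to some $x_1'$, color $H$) and $e_V$ (from $x_1'$ to some $x_2$, color $V$) the two edges of $\gamma$ after $\gamma_1$. At $x_1'$ I consider the oriented edges $e_H^{-1}$ and $e_V$, which share the origin $x_1'$ and carry the colors $H^{-1}$ and $V$; these are adjacent, because adjacency of colors depends only on their underlying vertices of $\Delta$ and $H,V$ are adjacent. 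By the fourth axiom of Definition \ref{def:specialcolor} they generate a square $Q$, whose remaining two edges are an edge $f$ from $x_1$ parallel to $e_V$ (hence of color $V$) and an edge $g$ from $x_2$ parallel to $e_H^{-1}$ (hence of color $H^{-1}$, so $g^{-1}$ has color $H$). The path $\gamma_1 \cdot f \cdot g^{-1} \cdot \gamma_2$, where $\gamma_2$ is the subpath of $\gamma$ after $e_V$ starting at $x_2$, is then based at $x$ and reads the colors $J_1,\ldots,J_r,V,H,J_{r+1},\ldots,J_{r+s}$, i.e. it represents $w'$. Hence $w'$ is $x$-legal.

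The only delicate points I anticipate are bookkeeping the orientations of the edges of $Q$ (to confirm that $f$ has color $V$ and $g^{-1}$ has color $H$, using that oriented edges dual to the same oriented hyperplane carry the same color) and the observation that adjacency of colors is insensitive to passing from $H$ to $H^{-1}$. Once these are in place, the constructed paths manifestly witness the required legality, so no further argument is needed.
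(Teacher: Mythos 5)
Your proof is correct and follows essentially the same route as the paper's: cancellation is handled by the uniqueness of an edge of a given color at a vertex (third axiom of Definition \ref{def:specialcolor}), and commutation by producing a square and flipping it. If anything, you are slightly more careful than the paper, which asserts that the consecutive edges $e_1,e_2$ ``must generate a square'' without spelling out the reorientation you make explicit --- applying the fourth axiom to $e_H^{-1}$ and $e_V$ at the common middle vertex and then tracking the colors of the opposite sides via the dual oriented hyperplanes.
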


\begin{proof}
Suppose first that $w'$ is obtained from $w$ by a commutation. So we can write 
$$w=J_1 \cdots J_r HV J_{r+1} \cdots J_{r+s} \ \text{and} \ w'= J_1 \cdots J_r VH J_{r+1} \cdots J_{r+s}$$
for some colors $H,V, J_1, \ldots, J_{r+s}$ with $H$ and $V$ adjacent. The path $\gamma$ representing $w$ can be written as $\gamma_1 e_1e_2 \gamma_2$ where $\gamma_1$ is the subpath of $\gamma$ representing the $x$-legal word $J_1 \cdots J_r$, $e_1$ and $e_2$ the next two edges along $\gamma$, respectively dual to oriented hyperplanes $A$ and $B$ which have colors $H$ and $V$ respectively, and $\gamma_2$ the subpath of $\gamma$ representing the $t(J_1 \cdots J_r HV)$-legal word $J_{r+1} \cdots J_{r+s}$. Because $H$ and $V$ are adjacent colors, necessarily the edges $e_1$ and $e_2$ must generate a square $Q$ whose dual hyperplanes are precisely $A$ and $B$. If $e_1'e_2'$ is the path obtained from $e_1e_2$ by flipping $Q$, then $\gamma_1 e_1'e_2' \gamma_2$ is a path representing $w'$. Therefore, $w'$ is $x$-legal. 

\medskip \noindent
Now, suppose that $w'$ is obtained from $w$ by a cancellation. So we can write
$$w=J_1 \cdots J_r JJ^{-1} J_{r+1} \cdots J_{r+s} \ \text{and} \ w'=J_1 \cdots J_r J_{r+1} \cdots J_{r+s}$$
for some colors $J,J_1, \ldots, J_{r+s}$. Let $\gamma$ denote the path representing $w$, and let $\gamma_1$ denote the subpath of $\gamma$ representing the $x$-legal word $J_1 \cdots J_r$ and $e$ the next edge along $\gamma$. So $e$ is dual to $J$. The edge following $e$ along $\gamma$ must be dual to a hyperplane which has color $J^{-1}$; since we know that there may exist at most one such edge, it follows that this edge must be $e^{-1}$. Therefore, $\gamma ee^{-1}$ is the subpath of $\gamma$ representing the $x$-legal word $J_1 \cdots J_r JJ^{-1}$. Let $\gamma_2$ denote the subpath of $\gamma$ representing the $t(J_1 \cdots J_r)$-legal word $J_{r+1} \cdots J_{r+s}$, so that $\gamma = \gamma_1 ee^{-1} \gamma_2$. Then $\gamma_1\gamma_2$ is a path representing $w'$, so $w'$ is an $x$-legal word. 
\end{proof}

\begin{proof}[Proof of Theorem \ref{thm:embedRAAG}.]
Consider the canonical map $\mathcal{L}(X) \to A(\Delta)$. By noticing that this map is invariant under cancellations, insertions, and commutations, we get a well-defined morphism $\mathcal{D}(X,x) \to A(\Delta)$, where $x \in X$ is a vertex we fix. The injectivity of this morphism follows from the following fact, which we record for future use:

\begin{fact}\label{fact:diagvsRAAG}
Let $X$ be a special cube complex, $x \in X$ a basepoint and $(\phi,\Delta)$ a special coloring of $X$. Two $x$-legal words of colors are equal in $\mathcal{D}(X,x)$ if and only if they are equal in $A(\Delta)$. 
\end{fact}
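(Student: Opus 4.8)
We need to prove Fact~\ref{fact:diagvsRAAG}: two $x$-legal words are equal in $\mathcal{D}(X,x)$ iff they are equal in $A(\Delta)$. One direction is immediate—if two words are equal in $\mathcal{D}(X,x)$, they differ by cancellations, insertions, and commutations, and each of these is a defining relation of $A(\Delta)$ (commutations of adjacent generators, and free-reduction cancellations $JJ^{-1}$). So the canonical map is well-defined and the real content is the reverse direction, which is exactly the injectivity we want.

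So let me think about what the injectivity claim amounts to. The plan would be to show that a reduced $x$-legal word that represents the identity in $A(\Delta)$ must be the empty word. Equivalently, I want to show that if an $x$-legal word is reduced as a word of oriented hyperplane-colors (no cancellation or commutation shortens it), then it is also reduced in $A(\Delta)$, i.e. it cannot be shortened by the RAAG relations either. The subtle point is that a priori the notion of ``reduced'' in $\mathcal{D}(X,x)$ and the notion of ``reduced'' in $A(\Delta)$ might differ: in $A(\Delta)$ one may cancel a pair $JJ^{-1}$ that is separated by letters all commuting with $J$, and I must check that such a cancellation is always available inside $\mathcal{D}(X,x)$ as well.

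<br>

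I would exploit the normal-form theory for RAAGs (a word is reduced in $A(\Delta)$ iff it contains no subword $J\, w\, J^{-1}$ where every letter of $w$ commutes with $J$; this is the standard pile/shuffle criterion). Suppose $w=J_1\cdots J_n$ is an $x$-legal word that is reduced in $\mathcal{D}(X,x)$ but represents an element of $A(\Delta)$ of shorter length. Then $w$ contains letters $J_i$ and $J_j=J_i^{-1}$ with $i<j$ such that every intermediate color $J_k$ ($i<k<j$) is adjacent to $J_i$ in $\Delta$ (hence commutes with it in $A(\Delta)$). Now I would use the commutation moves available in $\mathcal{D}(X,x)$: since the path $\gamma$ representing $w$ crosses, between the $i$-th and $j$-th edges, only hyperplanes whose colors are adjacent to $\phi^{-1}$ of $J_i$, Lemma~\ref{lem:commutationlegal} lets me slide $J_i$ rightward through each $J_k$, keeping the word $x$-legal at every stage, until it sits immediately before $J_j=J_i^{-1}$. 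At that point the subword $J_iJ_i^{-1}$ is a genuine cancellation, contradicting that $w$ was reduced in $\mathcal{D}(X,x)$. The crucial input is that each elementary commutation is legal, which is precisely Lemma~\ref{lem:commutationlegal}, together with the fact that adjacency of colors in $\Delta$ is exactly the condition under which the corresponding hyperplanes are transverse and generate a square.

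<br>

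The main obstacle I expect is verifying that the sliding procedure really stays inside $\mathcal{L}(X)$ and that the ``commutes in $A(\Delta)$'' hypothesis transfers correctly to ``adjacent colors, hence legal commutation.'' Concretely, I must confirm that whenever $J_i$ commutes with an intermediate letter $J_k$ in $A(\Delta)$, the colors are genuinely adjacent in $\Delta$ (so the RAAG relation is a defining commutator, not an accidental equality), and that Lemma~\ref{lem:commutationlegal} then applies at that step. This is where the defining axioms of a special coloring—particularly that adjacent colors force transverse hyperplanes that span squares—do the real work, and I would lean on the earlier identification of $M(X,x)$ with $\widetilde{X}^{(1)}$ and on the RAAG normal-form criterion to make the combinatorial bookkeeping rigorous.
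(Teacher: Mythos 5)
Your proposal is correct and takes essentially the same route as the paper: both rest on the RAAG normal form (two words are equal in $A(\Delta)$ if and only if their reductions differ only by commutations) combined with Lemma~\ref{lem:commutationlegal}, which makes every cancellation and commutation applied to a legal word legal again. Your innermost-spurious-pair sliding argument simply unpacks the normal-form appeal the paper leaves implicit, and the details you flag (distinct commuting generators of $A(\Delta)$ are adjacent in $\Delta$, plus the fact that two reduced legal words equal in $A(\Delta)$ differ by legal shuffles) are the routine completions of that same argument.
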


\noindent
Indeed, it follows from the normal form in right-angled Artin groups (see for instance \cite[Theorem 3.9]{GreenGP}) that two $x$-legal words are equal in $A(\Delta)$ if and only if their reductions (obtained by applying commutations and cancellations) differ only by applying commutations. Similarly, we know that these words are equal in $\mathcal{D}(X,x)$ if and only if their reductions (obtained by applying commutations and cancellations) differ only by applying commutations, with the restriction that all these transformations must produce, at each step, another legal word. Therefore, our fact follows from the previous lemma.

\medskip \noindent
This shows that our morphism $\mathcal{D}(X,x) \to A(\Delta)$ is injective. 
\end{proof}

\begin{remark}
As a particular case of Theorem \ref{thm:embedRAAG}, it follows that the fundamental group of a special cube complex $X$ embeds in $A(\Delta X)$, where $\Delta X$ denotes the crossing graph of $X$. This statement was originally proved in \cite{MR2377497}. In fact, our theorem also follows from the same arguments. Indeed, if $S(\Delta)$ denotes the Salvetti complex associated to the graph $\Delta$, it is not difficult to deduce from the definition of a special coloring that the map sending an oriented edge $e$ of $X$ to the (unique) oriented edge of $S(\Delta)$ labelled by the color of $e$ induces a local isometric embedding $X \hookrightarrow S(\Delta)$. Since a local isometric embedding between nonpositively curved cube complexes turns out to be $\pi_1$-injective, the conclusion follows. Moreover, we get exactly the same injective morphism $\pi_1(X) \hookrightarrow A(\Delta)$. 
\end{remark}

\noindent
The interest of Theorem \ref{thm:embedRAAG} compared to the original statement in \cite{MR2377497} is that finding a good special coloring of a some special cube complex allows us to embed its fundamental group in a smaller right-angled Artin group. We give an example below. 

\begin{ex}\label{ex:embedRAAG}
Let $X$ denote the Squier cube complex associated to the semigroup presentation 
$$\left\langle a_1,a_2,a_3,b_1,b_2,b_3,p \left| \ a_1=a_1p, \ b_1=pb_1, \begin{array}{l} a_1=a_2, a_2=a_3,a_3=a_1 \\ b_1=b_2, b_2=b_3, b_3=b_1 \end{array} \right. \right\rangle$$
and the baseword $a_1b_1$. We refer to \cite{MR1396957, arXiv:1507.01667} for more details on Squier complexes. The sequel is largely based on \cite[Example 3.17]{arXiv:1507.01667}. The fundamental group of $X$ is
$$\mathbb{Z} \bullet \mathbb{Z} = \langle a,b,z \mid [a,z^nbz^{-n}]=1, n \geq 0 \rangle.$$
Moreover, $X$ is a special cube complex with eight hyperplanes, denoted by $A_i=[1, a_i \to a_{i+1},b_1]$, $B_i=[a_1,b_i \to b_{i+1},1]$, $C=[1,a_1,a_1p,b_1]$ and $D=[a_1,b_1 \to pb_1,1]$ where $i \in \mathbb{Z}_3$. The crossing graph of $X$ is a complete bipartite graph $K_{4,4}$, so that the corresponding right-angled Artin group is the product of free groups
$$\mathbb{F}_4 \times \mathbb{F}_4 = \langle A_1,A_2,A_3, C \mid \ \rangle \ast \langle B_1,B_2,B_3,D \mid \ \rangle.$$ 
One obtains the embedding
$$\left\{ \begin{array}{ccc} \mathbb{Z} \bullet \mathbb{Z} & \to & \mathbb{F}_4 \times \mathbb{F}_4 \\ a & \mapsto & A_1A_2A_3^{-1} \\ b & \mapsto & B_1B_2B_3^{-1} \\ z & \mapsto & CD^{-1} \end{array} \right.$$
Now, let $\Delta$ be a square with vertices $A,B,C,D$ (in a cyclic order), and consider the coloring $\phi$ sending the $A_i$'s to $A$, the $B_i$'s to $B$, the oriented hyperplane $C$ to the vertex $C$, and the oriented hyperplane $D$ to the vertex $D$. So the corresponding right-angled Artin group becomes the product of free groups
$$\mathbb{F}_2 \times \mathbb{F}_2 = \langle A,C \mid \ \rangle \times \langle B,D \mid \ \rangle,$$
and the embedding one gets is
$$\left\{ \begin{array}{ccc} \mathbb{Z} \bullet \mathbb{Z}  & \to & \mathbb{F}_2 \times \mathbb{F}_2 \\ a & \mapsto & A^3 \\ b & \mapsto & B^3 \\ z & \mapsto & CD^{-1} \end{array} \right.$$
Moreover, by noticing that the subgroup $\langle A^3,B^3, C,D^{-1} \rangle \leq \mathbb{F}_2 \times \mathbb{F}_2$ is naturally isomorphic to $\mathbb{F}_2 \times \mathbb{F}_2$ itself, if follows that the subgroup $\langle A,B,CD \rangle$ of $\mathbb{F}_2 \times \mathbb{F}_2$ is isomorphic to the infinitely presented group $\mathbb{Z} \bullet \mathbb{Z}$. 
\end{ex}

\subsection{Periodic flats}\label{section:flat}

\noindent
This section is dedicated to the proof of the following statement. 

\begin{thm}\label{thm:periodicflat}
Let $X$ be a compact special cube complex. If its universal cover contains an $n$-dimensional combinatorial flat for some $n \geq 1$, then $\pi_1(X)$ contains $\mathbb{Z}^n$.
\end{thm}

\begin{proof}
Suppose that, for some $n \geq 1$, there exists a combinatorial embedding $[0,+ \infty)^n \hookrightarrow \widetilde{X}$, where $\widetilde{X}$ denotes the universal cover of $X$. For convenience, we identify $[0,+ \infty)^n$ with its image in $\widetilde{X}$. From now on, we fix a basepoint $x \in X$ which admits $\widetilde{x}=(0, \ldots, 0)$ as a lift in $\widetilde{X}$, and we identify the one-skeleton of $\widetilde{X}$ with the graph $M(X,x)$. For every $1 \leq i \leq n$ and every $j \geq 0$, set
$$x^i(j) = (\underset{i-1 \ \text{zeros}}{\underbrace{0, \ldots, 0}}, j, 0, \ldots, 0),$$
and write $x^i(j)$ as a word $H^i_1 \cdots H^i_j$, where $H^i_1, \ldots, H^i_j$ are colors, which is represented by the image in $X$ of the unique geodesic segments from $\widetilde{x}$ to $x^i(j)$ in $[0,+ \infty)^n$. Let $A^i_1, \ldots, A^i_j$ denote the corresponding sequence of hyperplanes. 

\medskip \noindent
Let us begin by proving two preliminary facts.

\begin{fact}\label{fact:hyptransverse}
For all distinct $1 \leq i,j \leq n$ and all $p,q \geq 1$, the colors $H^i_p$ and $H^j_q$ are adjacent. 
\end{fact}

\noindent
By construction, $A^i_p$ (resp. $A^j_q$) has a lift $B^i_p$ (resp. $B^j_q$) in $\widetilde{X}$ separating $\widetilde{x}$ and $x^i(p)$ (resp. $\widetilde{x}$ and $x^j(q)$). Since the hyperplanes $B^i_p$ and $B^i_q$ are transverse to distinct factors of our combinatorial flat, they must be transverse, so that $A^i_p$ and $A^j_q$ must be transverse as well. Thus, it follows from Lemma \ref{lem:transverseadjacent} that $H^i_p$ and $H^j_q$ are adjacent colors, proving our fact.

\begin{fact}\label{fact:coordinates}
For all $p_1, \ldots, p_n \geq 1$, one has
$$(p_1, \ldots, p_n) = H^1_1 \cdots H^1_{p_1} \cdots H^n_1 \cdots H^n_{p_n}.$$
\end{fact} 

\noindent
Consider the following path from $(0, \ldots, 0)$ to $(p_1, \ldots, p_n)$:
$$\left( [0,p_1] \times \{ (0,\ldots, 0)\} \right) \cup \left( \{p_1 \} \times [0,p_2] \times \{(0, \ldots, 0)\} \right) \cup \cdots \cup \left( \{p_1, \ldots, p_{n-1}\} \times [0,p_n] \right).$$
The word obtained by reading the colors labelling the edges of this path defines a diagram representing $(p_1, \ldots, p_n)$. By noticing that the edge
$$\{(p_1, \ldots, p_k) \} \times [i,i+1] \times \{ (p_{k+2}, \ldots, p_n) \}$$
is dual to the same hyperplane as $\{(0,\ldots, 0)\} \times [i,i+1] \times \{ (0, \ldots, 0)\}$, it follows from Lemma \ref{lem:labeledge} that they are both labelled by $H^{k+1}_{i+1}$. Our fact follows. 

\medskip \noindent
Now, since $X$ is compact, for every $1 \leq i \leq n$, there exist $p_i < q_i$ such that $x^i(p_i)$ and $x^i(q_i)$ have the same terminus, say $t_i \in X$. Set $o=(p_1, \ldots, p_n)$, and
$$a_i = (p_1, \ldots, p_{i-1}, q_i, p_{i+1}, \ldots, p_n)$$
for every $1 \leq i \leq n$. It follows from Fact \ref{fact:coordinates} that, for every $1 \leq i \leq n$, 
$$o^{-1}a_i = x^i(p_i)^{-1}x^i(q_i) = H^i_{p_{i+1}} \cdots H^i_{q_i};$$
moreover, since $x^i(p_i)$ and $x^i(q_i)$ have the same terminus, it follows that $o^{-1}a_i$ is spherical. Consequently, the diagrams $o^{-1}a_1, \ldots, o^{-1}a_n$ are non-trivial elements of $\mathcal{D}(X,t)$, where $t$ denotes the terminus of $o$. Thus, we have proved the following statement:

\begin{fact}\label{fact:flatfreeabeliansub}
Let $X$ be a special coloring, $x \in X$ a basepoint and $(\Delta, \phi)$ a special coloring. If the universal cover of $X$ contains an $n$-dimensional combinatorial flat, then $\mathcal{D}(X,x)$ contains $n$ non-trivial diagrams $g_1, \ldots, g_n$ such that, as words of colors, any color of $g_i$ is adjacent to any color of $g_j$ for all distinct $1 \leq i,j \leq n$. 
\end{fact}

\noindent
Clearly, $\langle g_1, \ldots, g_n \rangle$ defines a subgroup of $\mathcal{D}(X,x) \simeq \pi_1(X)$ which is isomorphic to $\mathbb{Z}^n$, proving our theorem.
\end{proof}

\begin{cor}
A virtually cocompact special group is hyperbolic if and only if it does not contain $\mathbb{Z}^2$. 
\end{cor}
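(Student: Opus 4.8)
The plan is to deduce the corollary from Theorem \ref{thm:periodicflat} together with the standard flat-versus-hyperbolicity dichotomy for CAT(0) cube complexes, and then to upgrade the statement from special groups to virtually special groups. First I would handle the genuinely special case: let $G = \pi_1(X)$ for a compact special cube complex $X$ with universal cover $\widetilde{X}$. The forward direction is immediate, since a hyperbolic group cannot contain $\mathbb{Z}^2$ (a quasi-isometrically embedded copy of $\mathbb{Z}^2$ would contradict the fact that hyperbolic groups have no such subgroups; equivalently, $\mathbb{Z}^2$ is not hyperbolic while undistorted subgroups of hyperbolic groups are hyperbolic). For the converse, I would invoke the cubical flat-torus-type alternative: since $G$ acts geometrically on the CAT(0) cube complex $\widetilde{X}$, the group $G$ is hyperbolic if and only if $\widetilde{X}$ contains no combinatorial two-dimensional flat (this is the statement recalled in the text just before Corollary \ref{cor:main2}). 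If $G$ is not hyperbolic, then $\widetilde{X}$ contains a combinatorial $2$-flat, and Theorem \ref{thm:periodicflat} with $n=2$ produces an embedded $\mathbb{Z}^2 \leq G$, which is the contrapositive of what we want.

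Next I would pass to the virtually special case. Suppose $G$ is virtually cocompact special, and let $H \leq G$ be a finite-index special subgroup. The key point is that both hyperbolicity and the property of containing $\mathbb{Z}^2$ are insensitive to passing to finite-index subgroups. Concretely, $G$ is hyperbolic if and only if $H$ is hyperbolic, because hyperbolicity is a quasi-isometry invariant and $H \hookrightarrow G$ with finite index is a quasi-isometry. Likewise, $G$ contains $\mathbb{Z}^2$ if and only if $H$ does: if $H$ contains $\mathbb{Z}^2$ then so does the overgroup $G$, and conversely if $\mathbb{Z}^2 \leq G$, then $\mathbb{Z}^2 \cap H$ has finite index in $\mathbb{Z}^2$ and is therefore again isomorphic to $\mathbb{Z}^2$ (any finite-index subgroup of $\mathbb{Z}^2$ is free abelian of the same rank), giving $\mathbb{Z}^2 \leq H$. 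Combining these two equivalences with the special case applied to $H$ yields the corollary for $G$.

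The main obstacle, such as it is, lies in being careful about the phrase ``virtually cocompact special'': one must ensure the finite-index subgroup $H$ is itself the fundamental group of a \emph{compact} special cube complex so that Theorem \ref{thm:periodicflat} genuinely applies to it. This is fine because a finite-index subgroup $H \leq \pi_1(X)$ with $X$ compact special corresponds to a finite-sheeted cover $X' \to X$, and $X'$ is again compact and special (covers of special cube complexes are special, and a finite cover of a compact complex is compact), so $H = \pi_1(X')$ is cocompact special in the required sense. With this verification in place, the two finite-index transfer arguments above close the proof without further difficulty, the only slightly delicate point being the elementary observation that finite-index subgroups of $\mathbb{Z}^2$ remain isomorphic to $\mathbb{Z}^2$.
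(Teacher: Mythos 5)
Your proof is correct and follows essentially the same route as the paper: the known fact that hyperbolic groups contain no $\mathbb{Z}^2$, the criterion that a cocompactly cubulated group is hyperbolic if and only if its universal cover has no combinatorial $2$-flat, and Theorem \ref{thm:periodicflat} with $n=2$, followed by a finite-index transfer. The only cosmetic difference is that the paper argues in one pass (if $G$ is not hyperbolic, neither is the special finite-index subgroup $\dot{G}$, which then contains $\mathbb{Z}^2$, hence so does $G$), so it never needs your intersection step $\mathbb{Z}^2 \cap H \simeq \mathbb{Z}^2$ --- which is correct but superfluous --- nor the covering-space verification, since by the paper's definition a special group already is the fundamental group of a compact special cube complex.
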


\begin{proof}
Let $G$ be a group containing a finite-index subgroup $\dot{G}$ which is the fundamental group of a compact special cube complex $X$. One already knows that, if $G$ is hyperbolic, then it does not contain $\mathbb{Z}^2$. Conversely, if $G$ is not hyperbolic, then $\dot{G}$ cannot be hyperbolic, so that the universal cover $\widetilde{X}$ must contain a two-dimensional combinatorial flat (see for instance \cite[Corollary 5]{CDEHV} or \cite[Theorem 3.3]{coningoff}).

\medskip \noindent
As a by-product, notice that the previous observation, together with Fact \ref{fact:flatfreeabeliansub}, produces the following statement, which we record for future use:

\begin{fact}\label{fact:nothyp}
Let $X$ be a cube complex and $(\Delta, \phi)$ a special coloring. If $\pi_1(X)$ is not hyperbolic, then there exist two commuting spherical diagrams $J_1 \cdots J_n$ and $H_1 \cdots H_m$ such that $J_i$ and $H_j$ are adjacent colors for every $1 \leq i \leq n$ and every $1 \leq j \leq m$. 
\end{fact}

\noindent
Anyway, it follows from Theorem \ref{thm:periodicflat} that $\dot{G}$, and a fortiori $G$, contains $\mathbb{Z}^2$, proving our corollary. 
\end{proof}

\section{Malnormal subgroups and relative hyperbolicity}\label{section:RHspecial}

\subsection{General relative hyperbolicity}\label{section:RH}

\noindent
In this section, we are interested in relatively hyperbolic special groups. Our main general criterion is the following:

\begin{thm}\label{thm:specialrelhyp}
Let $G$ be a special group and $\mathcal{H}$ a finite collection of subgroups. Then $G$ is hyperbolic relative to $\mathcal{H}$ if and only if the following conditions are satisfied:
\begin{itemize}
	\item each subgroup of $\mathcal{H}$ is convex-cocompact;
	\item $\mathcal{H}$ is an almost malnormal collection;
	\item every non virtually cyclic abelian subgroup of $G$ is contained in a conjugate of some group of $\mathcal{H}$. 
\end{itemize}
\end{thm}

\noindent
The statement is an algebraic version of a geometric intuition which can be found in other criteria of relative hyperbolicity, such as \cite{isolated, MR2469527}. 

\medskip \noindent
First of all, let us mention the definition of relative hyperbolicity which we use. (The definition comes from \cite{relativelyhyperbolic}, and we refer to \cite{MR2684983} for proofs of its equivalence with other definitions.)

\begin{definition}
A finitely generated group $G$ is \emph{hyperbolic relative to a collection of subgroups $\mathcal{H}=\{ H_1, \ldots, H_n \}$} if $G$ acts by isometries on a graph $\Gamma$ such that:
\begin{itemize}
	\item $\Gamma$ is hyperbolic;
	\item $\Gamma$ contains finitely-many orbits of edges;
	\item edge-stabilisers are finite;
	\item each vertex-stabilizer is either finite or is a conjugate of some $H_i$,
	\item any $H_i$ stabilises a vertex,
	\item $\Gamma$ is \emph{fine}, ie., any edge belongs only to finitely-many simple loops (or \emph{cycle}) of a given length.
\end{itemize}
A subgroup conjugate to some $H_i$ is \emph{peripheral}. 
\end{definition}

\begin{definition}
A finitely generated group is \emph{relatively hyperbolic} if it is hyperbolic relative to a finite collection of proper subgroups.
\end{definition}

\noindent
The first step towards the proof of Theorem \ref{thm:specialrelhyp} is to understand what the malnormality of our collection of subgroups implies geometrically. 

\begin{prop}\label{prop:malnormalcontracting}
Let $X$ be a compact special cube complex, $(\Delta, \phi)$ a special coloring and $x \in X$ a basepoint. If $H \subset \mathcal{D}(X,x)$ is a malnormal convex-cocompact subgroup, then the convex hull of $H$ in $M(X,x)$ is contracting.
\end{prop}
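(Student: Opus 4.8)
The plan is to argue by contradiction, extracting from a failure of contraction a pair of commuting elements, one inside $H$ and one outside, which is incompatible with malnormality. The engine is the following algebraic consequence of malnormality: if $1 \neq a \in H$, then $C_G(a) \subseteq H$, since any $c \in C_G(a)$ satisfies $cac^{-1} = a \in H \cap cHc^{-1}$, so that $H \cap cHc^{-1} \neq \{1\}$ and malnormality forces $c \in H$. Hence it suffices to show: whenever the convex hull $C$ of $H$ in $M(X,x) = \widetilde{X}$ is \emph{not} contracting, one can produce a nontrivial $a \in H$ and an element $b \in G \setminus H$ with $[a,b]=1$; then $b \in C_G(a) \subseteq H$, a contradiction.

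So suppose $C$ is not contracting. By Theorem~\ref{thm:contractingCube} in its grid form, for every $L \geq 0$ there are families of pairwise disjoint hyperplanes $\mathcal{V} \subseteq \mathcal{H}(C)$ and $\mathcal{H}$ with $\mathcal{H} \cap \mathcal{H}(C) = \emptyset$, every hyperplane of $\mathcal{V}$ transverse to every hyperplane of $\mathcal{H}$, and $\min(\#\mathcal{V}, \#\mathcal{H}) > L$. Geometrically this is a thick combinatorial flat whose base crosses $\mathcal{V}$ and lies in $C$, while its columns cross $\mathcal{H}$ and leave $C$. I would build $a$ from the base by an orbit--pigeonhole: since $H$ acts cocompactly on $C$ there are finitely many $H$-orbits of hyperplanes in $\mathcal{H}(C)$, so for $L$ large two hyperplanes $V,V' \in \mathcal{V}$ share an $H$-orbit, say $hV = V'$ with $h \in H$; as $V,V'$ are disjoint and nested, a suitable power of $h$ skewers $V$ (Lemma~\ref{lem:hypskew}), and I set $a := h \in H \setminus \{1\}$ with $V \in \mathrm{sk}(a)$. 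The crucial rigidity coming from membership in $H$ is that any infinite-order element of $H$ acts on the invariant convex cocompact subcomplex $C$ as a hyperbolic isometry, hence admits a combinatorial axis inside $C$; by Lemma~\ref{lem:hypskew} every hyperplane it skewers crosses that axis, so $\mathrm{sk}(a) \subseteq \mathcal{H}(C)$ for \emph{all} $a \in H$. This gives a clean certificate for leaving $H$: any element skewering a hyperplane disjoint from $C$ cannot lie in $H$.

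The main obstacle is the commuting partner $b$, which must simultaneously commute with $a$ and skewer a hyperplane off $C$. The two demands pull in opposite directions. Commutation is transparent in the colouring formalism: if $a$ is represented by a word in the colours of $\mathcal{V}$ and $b$ by a word in the colours of $\mathcal{H}$, then all these colours are pairwise adjacent by Lemma~\ref{lem:transverseadjacent}, so $a$ and $b$ commute in $A(\Delta)$ and hence in $G$ by Theorem~\ref{thm:embedRAAG}. However, a plain terminus--matching in the column direction (as in Fact~\ref{fact:flatfreeabeliansub}) produces a colour word in $\mathcal{H}$ that need not genuinely skewer an off-$C$ hyperplane, so the certificate above does not obviously apply. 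To reconcile the two I would choose $a$ so that its axis runs \emph{along} the flat, i.e. so that $\mathcal{H} \subseteq T(a)$; using the mutual transversality of $\mathcal{V}$ and $\mathcal{H}$ together with cocompactness of the $H$-action, one arranges that $\mathrm{sk}(a)$ consists only of hyperplanes transverse to all of $\mathcal{H}$. Then $M(a) = T(a) \times S(a)$ has an infinite factor $T(a)$ meeting hyperplanes disjoint from $C$, so $a$ is not contracting by Proposition~\ref{prop:contracting}.

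Feeding this non-contraction into the product-region machinery of Section~\ref{section:rankone} (exactly as in the proof of Theorem~\ref{thm:contractingisom}), a further pigeonhole on the cocompact $G$-action on the $T(a)$-direction yields an element $b \in C_G(a)$ translating transversally to $C$, hence skewering a hyperplane disjoint from $C$. By the axis-in-$C$ certificate, $b \notin H$, while $b \in C_G(a)$ gives $[a,b]=1$, and the reduction of the first paragraph produces the contradiction, so $C$ is contracting. The delicate point, where I expect the real work to lie, is precisely the clause ``arrange $\mathcal{H} \subseteq T(a)$'': upgrading the orbit-matched element $a$ to one whose combinatorial axis stays parallel to the thick columns. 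This is the step that genuinely uses both cocompactness of $H$ on $C$ and cocompactness of $G$ on $\widetilde{X}$, and it is what prevents the argument from collapsing to the simpler (but insufficient) colour-word construction.
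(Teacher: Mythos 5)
Your overall strategy is genuinely different from the paper's, and it has two real gaps, one of which you flag yourself. The first and most serious is the clause ``arrange $\mathcal{H} \subseteq T(a)$''. The element $a = h$ produced by the orbit pigeonhole only satisfies $hV = V'$ for two hyperplanes of the finite family $\mathcal{V}$; by Lemma \ref{lem:hypskew}, $\mathrm{sk}(a)$ consists of \emph{all} hyperplanes crossing a combinatorial axis of $a$, and this axis, while it may be taken inside the convex hull $C$, has no reason to stay parallel to the rectangle. Since $\mathrm{sk}(a)$ is infinite and the grid $(\mathcal{H},\mathcal{V})$ is finite, mutual transversality of $\mathcal{V}$ and $\mathcal{H}$ gives no control whatsoever over the transversality of $\mathcal{H}$ with the remaining (infinitely many) skewered hyperplanes, so the hypothesis of Proposition \ref{prop:contracting} needed to make $a$ non-contracting is never established; cocompactness of $H$ on $C$ does not supply it, and a limiting argument over $L \to \infty$ does not obviously produce a single $a$ compatible with all the grids. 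The second gap is in the endgame, and it is exactly the defect you yourself identified for the naive column construction: the commuting partner $b$ obtained by terminus pigeonhole in the $T(a)$-direction (imitating the proof of Theorem \ref{thm:contractingisom}) is a word in \emph{colours} of hyperplanes crossing $T(a)$, but colours do not remember position in $M(X,x)$ --- hyperplanes bearing those colours may perfectly well cross $C$. Nothing shows that $b$ skewers a hyperplane disjoint from $C$, so your certificate $\mathrm{sk}(b) \not\subseteq \mathcal{H}(C)$ fails to apply, $b \in H$ is not excluded, and $[a,b]=1$ with $a,b \in H$ contradicts nothing. Since malnormality gives $C_G(a) \subseteq H$ unconditionally, the whole reduction collapses unless $b \notin H$ is certified, and your construction cannot certify it.

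It is instructive to compare with the paper's proof, which sidesteps both problems by never building an exactly commuting pair and never testing membership in $H$. It truncates the offending rectangle at the \emph{fixed} height $L = \#V(X)+1$, applies the pigeonhole on termini along the short side to produce a single element $g = WABA^{-1}W^{-1} \in \mathcal{D}(X,x)$, and uses the colour commutations only to conclude that $g$ displaces each vertex $WC_k$ of the long base by at most $3L$, whence $\mathrm{diam}\left( Y^{+3L} \cap gY^{+3L} \right) \geq n$. The quantitative malnormality Lemma \ref{lem:malnormalS} (together with Lemma \ref{lem:convexhullgeom}) then bounds $n$: its proof converts many coarse coincidences between $Y^{+3L}$ and $gY^{+3L}$ into a genuine nontrivial element of $H \cap H^g$ by properness, which is precisely the algebraic extraction your reduction via $C_G(a) \subseteq H$ was trying to perform, but in a coarse form that malnormality actually controls. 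In short: exact commutation plus a skewering certificate is both stronger than needed and unobtainable by your construction; coarse displacement plus Lemma \ref{lem:malnormalS} is the workable substitute. (The minor point that a ``suitable power of $h$ skewers $V$'' also needs an orientation pigeonhole on halfspaces, since $hV = V'$ alone allows $h$ to flip the orientation, but that is routine and fixable.)
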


\noindent
We begin by proving two preliminary lemmas.

\begin{lemma}\label{lem:convexhullgeom}
Let $\widetilde{X}$ be a finite-dimensional CAT(0) cube complex and $G \leq \mathrm{Isom}(X)$ a convex-cocompact subgroup. For every $x \in \widetilde{X}$, $G$ acts geometrically on the convex hull of the orbit $G \cdot x$.
\end{lemma}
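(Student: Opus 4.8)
The plan is to reduce the statement to a coarse comparison between $C:=\mathrm{Hull}(G\cdot x)$ and a convex subcomplex $Y\subseteq\widetilde{X}$ on which $G$ acts geometrically (such a $Y$ exists by convex-cocompactness), and then to control that comparison at the level of hyperplanes. First I would record the two cheap facts: the cubical convex hull of a $G$-invariant set is $G$-invariant, so $C$ is $G$-invariant and $G$ acts on it; and this action is proper, being the restriction of the proper action $G\curvearrowright\widetilde{X}$. Thus the whole content of the lemma is the cocompactness of $G\curvearrowright C$, which I would deduce from the claim that $C$ and $Y$ lie at finite Hausdorff distance: a finite Hausdorff distance makes the orbit $G\cdot x\subseteq C$ coarsely dense in $C$, so the proper action $G\curvearrowright C$ is also cobounded, hence geometric.

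For the Hausdorff estimate, one inclusion is easy. Writing $q=\mathrm{proj}_Y(x)$ and $d=d(x,Y)$, equivariance of the gate projection gives $\mathrm{proj}_Y(g\cdot x)=g\cdot q$ with $d(g\cdot x,g\cdot q)=d$; since $G$ acts cocompactly on $Y$, the orbit $G\cdot q$ is coarsely dense in $Y$, so $Y\subseteq N_{d+R}(G\cdot x)\subseteq N_{d+R}(C)$ for a suitable $R$ (here $N_r(\cdot)$ denotes the combinatorial $r$-neighbourhood). The hard part will be the reverse inclusion $C\subseteq N_{D}(Y)$, i.e. showing that the hull does not drift away from $Y$. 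The key observations are that every orbit point lies at distance exactly $d$ from the $G$-invariant subcomplex $Y$, and that, by definition of the hull, every halfspace containing a vertex $v\in C$ also contains some orbit point. Given a hyperplane $J$ separating such a $v$ from $Y$, with $v$ on the far side, I would pick an orbit point $g\cdot x$ on that same side; then $J$ separates $g\cdot x$ from $Y$, and applying $g^{-1}$ (which preserves $Y$) exhibits $g^{-1}J$ as one of the $d$ hyperplanes separating $x$ from $Y$.

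The main obstacle is turning this pointwise ``pushing'' into a \emph{uniform} bound on $d(v,Y)=\#\{J : J\text{ separates }v\text{ from }Y\}$, since the correspondence $J\mapsto g^{-1}J$ uses an auxiliary $g$ depending on $J$ and is not visibly injective, and the hyperplanes separating $v$ from $Y$ may be mutually transverse rather than nested. I would handle this exactly as in the proof of Theorem \ref{thm:contractingCube}: the nested chains of such hyperplanes are controlled by the pushing argument, while any family of pairwise transverse ones is bounded using finite-dimensionality together with Ramsey's theorem; alternatively, a disc-diagram bounded by $N(J)$, $Y$ and a geodesic of $C$ should produce a flat rectangle whose thin side is governed by $d$, in the spirit of the implication $(iv)\Rightarrow(iii)$ established above. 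Once $C\subseteq N_{D}(Y)$ is proved, combining it with the easy inclusion yields finite Hausdorff distance, and the reduction of the first paragraph completes the proof.
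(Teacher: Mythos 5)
Your proposal is correct, but it takes a genuinely different route from the paper. The paper's proof is a three-line reduction to a quoted result: it takes a neighborhood $Z$ of the convex subcomplex $Y$ with respect to the $\ell^{\infty}$-metric, large enough to contain $x$, invokes \cite[Corollary 3.5]{packing} to conclude that $Z$ is a convex subcomplex, observes that $Z$ is $G$-invariant and therefore contains $C=\mathrm{Hull}(G\cdot x)$, and uses finite dimensionality (the $\ell^{\infty}$- and $\ell^1$-metrics are bi-Lipschitz) to get that $G$ acts geometrically on $Z$, so that $G$-translates of a ball covering $Z$ also cover $C$. You instead establish the crucial containment $C\subseteq N_D(Y)$ by hand, which amounts to reproving the relevant case of the cited convexity result; your pushing argument is sound, and the uniformity gap you correctly flag closes exactly along the lines you indicate, and more easily than you fear. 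Indeed, two disjoint hyperplanes separating a vertex $v\in C$ from $Y$ are automatically nested, so for a chain $D_1\subsetneq \cdots \subsetneq D_k$ of $v$-side halfspaces it suffices to pick a single orbit point $g\cdot x\in D_1$ (it exists since $v\in C\cap D_1$ and the hull is the intersection of the halfspaces containing $G\cdot x$): then all $k$ hyperplanes separate $g\cdot x$ from $Y$, whence $k\leq d(g\cdot x,Y)=d$ with no injectivity issue; antichains consist of pairwise transverse hyperplanes, hence have size at most $\dim\widetilde{X}$; and Dilworth/Mirsky (or Ramsey, as in Theorem \ref{thm:contractingCube}) yields the uniform bound $d(v,Y)\leq d\cdot\dim\widetilde{X}$ — exactly the constant implicit in the paper's $\ell^{\infty}$-trick. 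Two minor remarks: the full Hausdorff-distance formulation is slightly more than needed, since cocompactness of $G\curvearrowright C$ follows from $C\subseteq N_D(Y)$ together with cocompactness of $G\curvearrowright Y$ (though your easy inclusion is what converts proximity to $Y$ into proximity to the orbit $G\cdot x$, so it does real work); and the disc-diagram alternative is dispensable given the poset argument. What each approach buys: the paper's is shorter and defers to the literature, while yours is self-contained, produces an explicit constant, and applies verbatim to the hull of any $G$-invariant subset at finite distance from $Y$.
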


\begin{proof}
Let $Y$ be a convex subcomplex of $\widetilde{X}$ on which $G$ acts geometrically. Let $Z$ denote a neighborhood of $Y$ with respect to the $\ell^{\infty}$-metric, ie., the metric obtained by extending the $\ell^{\infty}$-norms defined on each cube of $\widetilde{X}$, which contains the basepoint $x$. According to \cite[Corollary 3.5]{packing}, $Z$ is a convex subcomplex. Moreover, since $\widetilde{X}$ is finite-dimensional, our new metric is quasi-isometric to the old one. Necessarily $Z$ lies in a neighborhood of $Y$, so that $G$ acts geometrically on $Z$ as well. Therefore, there exists a constant $K \geq 0$ such that $Z$ is covered by $G$-translates of the ball $B(x,K)$. Clearly, the convex hull $C$ of the orbit $G \cdot x$ is contained in $Z$, so $C$ is also covered by $G$-translates of the ball $B(x,K)$, showing that $G$ acts geometrically on~$C$. 
\end{proof}

\noindent
In our second lemma, and in the rest of the section, we use the following convenient notation: if $X$ is a metric space and $S \subset X$ a subset, for every $L \geq 0$ we denote by $S^{+L}$ the $L$-neighborhood of $S$, namely $\{ x \in X \mid d(x,S) \leq L\}$. 

\begin{lemma}\label{lem:malnormalS}
Let $G$ be a group acting metrically properly on a geodesic metric space $S$, $\{ H_1, \ldots, H_m \}$ a malnormal collection of subgroups, and $x \in S$ a basepoint. Suppose that, for every $1 \leq i \leq m$, there exists a subspace $S_i \supset H_i \cdot x$ on which $H_i$ acts geometrically. For every $L \geq 0$, there exists some $K \geq 0$ such that 
$$\mathrm{diam} \left( S_i^{+L} \cap g S_j^{+L} \right) \leq K$$
whenever $i \neq j$ and $g \in G$ or $i=j$ and $g \in G \backslash \{ H_i \}$. 
\end{lemma}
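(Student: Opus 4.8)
The plan is to argue by contradiction, exploiting the cocompactness of each $H_i$ on $S_i$ together with the properness of the ambient action in order to reduce the statement to a single fixed coset, after which malnormality can be contradicted by a counting argument on the orbit $H_i \cdot x$.

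First I would fix $L \geq 0$ and suppose the conclusion fails. Since there are only finitely many pairs $(i,j)$, after passing to a subsequence I obtain fixed indices $i,j$ and elements $g_n \in G$, each \emph{forbidden} (meaning either $i \neq j$, or $i=j$ and $g_n \notin H_i$), such that $\mathrm{diam}(S_i^{+L} \cap g_n S_j^{+L}) \to \infty$. Because $H_i$ acts cocompactly on $S_i$, there is a constant $D \geq 0$ such that every point of $S_i$ lies within $D$ of $H_i \cdot x$, and similarly for $S_j$; and because the action of $G$ on $S$ is metrically proper, for every $R \geq 0$ the set $\{ g \in G : d(x,gx) \leq R \}$ is finite. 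Note that $x = 1 \cdot x \in H_i \cdot x \subset S_i$.

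The key step is a normalisation replacing the moving subspaces $g_n S_j$ by a single fixed one. Using cocompactness of $H_i$ on $S_i$, I translate the configuration by a suitable $h_n \in H_i$, which fixes $S_i$ setwise, so that the intersection $S_i^{+L} \cap g_n' S_j^{+L}$, where $g_n' := h_n^{-1} g_n$, contains a point at distance at most some fixed $C$ from $x$; here $g_n'$ is still forbidden since $h_n^{-1} \in H_i$. Such a point lies within $L+D$ of $g_n' H_j \cdot x$, so there is $k_n \in H_j$ with $d(x, g_n' k_n x)$ bounded independently of $n$; by properness $g_n' k_n$ ranges over a finite set, so after a further subsequence $g_n' k_n = f$ is constant. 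Then $g_n' S_j = f k_n^{-1} S_j = f S_j$, hence the intersection equals the fixed set $S_i^{+L} \cap f S_j^{+L}$, which therefore has infinite diameter; moreover $f$ is again forbidden, for if $i=j$ and $f \in H_i$ then $g_n' = f k_n^{-1} \in H_i$, a contradiction. It then remains to contradict malnormality using this single unbounded intersection: for each $y \in S_i^{+L} \cap f S_j^{+L}$ choose $h \in H_i$ with $d(hx,y) \leq L+D$ and $k \in H_j$ with $d(fkx,y) \leq L+D$, so that $h^{-1} f k$ lies in the finite set $F := \{ g : d(x,gx) \leq 2(L+D) \}$. Letting $y$ run over points with $d(x,y) \to \infty$, properness forces infinitely many distinct $h \in H_i$ to arise, while $h^{-1} f k \in F$ takes only finitely many values; by pigeonhole one value is attained by infinitely many distinct $h$. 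For two such, $h$ and $h'$, the identity $h^{-1} f k = h'^{-1} f k'$ rearranges to $f^{-1}(h' h^{-1}) f = k' k^{-1} \in H_j$ with $h' h^{-1} \in H_i \setminus \{1\}$, so $h' h^{-1} \in H_i \cap f H_j f^{-1}$. Ranging over the infinite fibre yields infinitely many such elements, so $H_i \cap f H_j f^{-1}$ is infinite, contradicting the (almost) malnormality of $\{H_1,\dots,H_m\}$ since $f$ is forbidden.

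I expect the main obstacle to be precisely the normalisation carried out above. The naive approach of sampling points along a geodesic lying inside the intersection founders on the fact that neither $S$ nor the $S_i$ are assumed convex, so one cannot guarantee that such samples stay close to both subspaces simultaneously. Replacing the moving coset $g_n S_j$ by a fixed subspace $f S_j$ via an $H_i$-translation and properness is what circumvents this difficulty and converts the problem into a clean counting argument on the single orbit $H_i \cdot x$; the remaining metric estimates are routine applications of the triangle inequality.
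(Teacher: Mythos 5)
Your proof is correct, and its engine is the same one the paper runs: approximate points of the thickened intersection by orbit points $hx \in H_i\cdot x$ and $gkx \in gH_j \cdot x$, note that the elements $h^{-1}gk$ displace the basepoint by a bounded amount, use metric properness to make the set of such elements finite, and pigeonhole a coincidence $h^{-1}gk = h'^{-1}gk'$, which rearranges into a nontrivial element of $H_i \cap gH_jg^{-1}$, contradicting malnormality. Where you differ is the packaging, and the difference is worth noting. The paper argues directly and uniformly in $g$, with no contradiction and no sequences: if $\mathrm{diam}\left( S_i^{+L} \cap gS_j^{+L}\right)$ exceeds $n(2C+1)$, it samples $n$ pairwise well-separated points in the intersection and pigeonholes among the at most $N$ values of $h_s^{-1}k_s$, yielding the explicit constant $K = N(2C+1)$. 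In particular, the normalisation you single out as the key obstacle --- replacing the moving cosets $g_nS_j$ by a single fixed $fS_j$ via an $H_i$-translation and properness --- is dispensable: since the counting needs only finitely many sample points rather than an unbounded intersection, it can be run for each forbidden $g$ separately, and the subspaces never need to be brought into a common position. Your route does buy two things. First, producing the nontrivial element as $h'h^{-1}$ with $h \neq h'$ drawn from an infinite pigeonhole fibre is cleaner than the paper's separate metric estimate $d(pa_2,a_2) \geq 1$; indeed, for that estimate to work as printed the separation constant should be $2(L+C)+1$ rather than $2C+1$, since the sample points are only within $L+C$ (not $C$) of the orbit points, so your mechanism sidesteps a small constant-tracking slip. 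Second, your argument shows $H_i \cap fH_jf^{-1}$ is infinite rather than merely nontrivial, so it proves the lemma verbatim for almost malnormal collections, which is the form relevant to the application in Theorem \ref{thm:specialrelhyp}. The price of the contradiction-and-subsequence framing is that it is non-quantitative: it produces no explicit $K$, whereas the paper's direct count does.
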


\begin{proof}
Let $C \geq 0$ a constant such that, for every $1 \leq i \leq m$, $S_i$ is covered by $H_i$-translates of the ball $B(x,C)$. Fix some $L \geq 0$, some $1 \leq r,t \leq m$ and some $g \in G$, and suppose that the diameter of $S_r^{+L} \cap gS_t^{+L}$ is at least $n(2C+1)$ for some $n \geq 1$. As a consequence, there exist $a_1, \ldots , a_n \in S_r^{+L} \cap gS_t^{+L}$ such that $d(a_i,a_j) \geq 2C+1$ for all distinct $1 \leq i,j \leq n$. For every $1 \leq i \leq n$, fix $b_i \in S_r$ and $c_i \in gS_t$ such that $d(a_i,b_i) \leq L$ and $d(a_i,c_i) \leq L$. For every $1 \leq i \leq n$, there exist $h_i \in H_r$ and $h_k \in H_t^g$ such that $d(b_i,h_ix) \leq C$ and $d(c_i,k_ix) \leq C$. Notice that, for every $1 \leq i \leq n$, one has
$$d(h_ix,k_ix) \leq d(h_ix,b_i)+d(b_i,a_i)+d(a_i,c_i)+d(c_i,k_ix) \leq 2(L+C),$$
or equivalently, $d(x,h_i^{-1}k_i x) \leq 2(L+C)$. Now, because $G$ acts metrically properly on $S$, there exists some $N \geq 0$ such that at most $N$ elements of $G$ may satisfy this inequality. Consequently, if $n > N$, then $\{ h_i^{-1}k_i \mid 1 \leq i \leq n\}$ must contain at least two equal elements, say $h_1^{-1}k_1$ and $h_2^{-1}k_2$; equivalently, $h_1h_2^{-1}k_2k_1^{-1}=1$. For convenience, set $p=h_1h_2^{-1}=k_1k_2^{-1}$; notice that $p$ belongs to $S_r \cap S_t^g$. Next, one has
$$\begin{array}{lcl} d(pa_2, a_2) & \geq & d(a_2,a_1) -d(pa_2, a_1) \geq d(a_2,a_1)-d(h_2^{-1}a_2,x)- d(x,h_1^{-1}a_1) \\ \\ & \geq & 2C+1-C-C=1 \end{array}$$
A fortiori, $p \neq 1$, so that $H_i \cap H_j^g \neq \{1 \}$. It implies that $i=j$ and $g \in H_i$. We conclude that $K=N (2C+1)$ is the constant we are looking for. 
\end{proof}

\begin{proof}[Proof of Proposition \ref{prop:malnormalcontracting}.]
Let $H \subset \mathcal{D}(X,x)$ be a subgroup and let $Y$ denote the convex hull of $H$ in $M(X,x)$. For convenience, set $L=\# V(X)+1$. Suppose that there exists a flat rectangle $R : [0,n] \times [0,L] \hookrightarrow M(X,x)$ satisfying $Y \cap R = [0,n] \times \{0\}$ for some $n > \# V(\Gamma)$. Because the terminus of a diagram may take at most $\#V(X)$ different values, there must exist $0 \leq a < b \leq L$ such that $(0,a)$ and $(0,b)$ have the same terminus. Let $W$ be the diagram $(0,0)$, $A$ the diagram labelling the geodesic between $(0,0)$ and $(0,a)$ in $R$, and $B$ the diagram labelling the geodesic between $(0,a)$ and $(0,b)$. Notice that $B$ is a $(t(WA),t(WA))$-diagram, so that $g=WABA^{-1} W^{-1}$ belongs to $\mathcal{D}(X,x)$. Now, for every $1 \leq k \leq n$, let $C_k$ denote the diagram labelling the geodesic between $(0,0)$ and $(k,0)$ in $R$. Notice that, because any hyperplane intersecting $[0,n] \times \{ 0 \}$ is transverse to any hyperplane intersecting $\{0 \} \times [0,L]$, any color of the word $C_k$ is adjacent to any color of the words $A$ and $B$. Consequently, 
$$g \cdot WC_k = WABA^{-1}W^{-1} \cdot WC_k = W \cdot ABA^{-1} \cdot C_k = W \cdot C_k \cdot ABA^{-1},$$
so that $d(WC_k,gWC_k) = d(\epsilon(x),ABA^{-1}) \leq \mathrm{length}(ABA^{-1}) \leq 3L$. It follows that
$$\mathrm{diam} \left( Y^{+3L} \cap gY^{+3L} \right) \geq n.$$
But, since $H$ acts geometrically on $Y$ as a consequence of Lemma \ref{lem:convexhullgeom}, Lemma \ref{lem:malnormalS} applies, and it implies that $n$ cannot be arbitrarily large. The conclusion follows from Theorem \ref{thm:contractingCube}
\end{proof}

\noindent
Our next lemma will be useful when combined with Lemma \ref{lem:malnormalS}.

\begin{lemma}\label{lem:fellowtravel}
Let $X$ be a CAT(0) cube complex and $A,B \subset X$ two $L$-contracting convex subcomplexes. Suppose that any vertex of $X$ has at most $R \geq 2$ neighbors. If there exist $N \geq \max(L,2)$ hyperplanes transverse to both $A$ and $B$, then the inequality
$$\mathrm{diam} \left( A^{+L} \cap B^{+L} \right) \geq \ln(N-1)/\ln(R)$$
holds. 
\end{lemma}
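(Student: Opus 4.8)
The plan is to exhibit two vertices of $A^{+L} \cap B^{+L}$ that are far apart, and to extract them from the common crossing hyperplanes. Write $\mathcal{T}$ for the set of $N$ hyperplanes crossing both $A$ and $B$, so that $\mathcal{T} \subseteq \mathcal{H}(A) \cap \mathcal{H}(B)$. Two ingredients are needed: a \emph{metric} one, producing a geodesic that crosses many hyperplanes of $\mathcal{T}$ (this is what forces a large diameter), and a \emph{position} one, guaranteeing via the contraction hypothesis that the endpoints of this geodesic actually lie within $L$ of both $A$ and $B$.

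For the metric ingredient I would fix a vertex $x_0 \in A$ and sort the hyperplanes of $\mathcal{T}$ by depth from $x_0$, orienting each $T \in \mathcal{T}$ away from $x_0$ and recording the nesting of the resulting halfspaces. A vertex of $X$ has at most $R$ neighbours, so at each step a geodesic issuing from $x_0$ has at most $R$ ways to continue; moreover a vertex of degree $R$ cannot sit in a cube of dimension exceeding $R$, so $X$ is finite-dimensional and any family of pairwise transverse hyperplanes has size at most $R$. A branching count along these lines bounds by $R^{k}$ the number of hyperplanes of $\mathcal{T}$ that can be separated from $x_0$ by a geodesic of length at most $k$; since all $N$ of them are eventually separated, some vertex $v$ must be separated from $x_0$ by at least $\ln(N-1)/\ln(R)$ hyperplanes of $\mathcal{T}$. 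As the hyperplanes separating $x_0$ from $v$ account for the distance between them, this produces vertices $u=x_0$ and $v$ with $d(u,v) \geq \ln(N-1)/\ln(R)$, realised by $m := d(u,v)$ hyperplanes of $\mathcal{T}$ crossed by $[u,v]$.

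For the position ingredient I would use that the hyperplanes separating $A$ from $B$ form a family $\mathcal{S}$ with $\#\mathcal{S} = d(A,B)$, none of which crosses $A$, each transverse to every hyperplane of $\mathcal{T}$. Taking the sub-collection of $\mathcal{T}$ realised along $[u,v]$ as a facing-triple-free family $\mathcal{V}$ transverse to all of $\mathcal{S}$, the pair $(\mathcal{S}, \mathcal{V})$ is a join of hyperplanes with $\mathcal{S} \cap \mathcal{H}(A) = \emptyset$ and $\mathcal{V} \subseteq \mathcal{H}(A)$. By Theorem \ref{thm:contractingCube} and the $L$-contraction of $A$, this join is $L$-thin, so that $d(A,B) = \#\mathcal{S} \leq L$ as soon as $\#\mathcal{V}$ exceeds $L$. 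A bridge of width at most $L$ then places the whole of $[u,v]$ within $L$ of both $A$ and $B$, giving $u,v \in A^{+L} \cap B^{+L}$ and hence $\mathrm{diam}(A^{+L} \cap B^{+L}) \geq d(u,v) \geq \ln(N-1)/\ln(R)$.

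The main obstacle is the coupling of the two ingredients: the branching count must produce not merely a geodesic crossing many hyperplanes of $\mathcal{T}$, but one whose crossed hyperplanes form a facing-triple-free family long enough (relative to $L$) to trigger the contraction estimate and thereby force $A$ and $B$ together, so that the far-apart vertices really land in the common neighbourhood. This is exactly the delicate point, and it is where finite-dimensionality (through the degree bound $R$) enters twice: once to keep the number of pairwise transverse hyperplanes bounded, so that the $N$ common hyperplanes genuinely spread out into a long chain rather than bunching into a single high-dimensional corner, and once in the branching estimate that converts this spreading into the logarithmic lower bound on the diameter.
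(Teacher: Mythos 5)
Your counting ingredient is sound and essentially matches the paper's (the paper bounds the number of vertices of a diameter-$d$, degree-$\le R$ cube complex by $R^d+1$ and the number of hyperplanes by the number of edges, whence $N \leq R^d+1$ and $d \geq \ln(N-1)/\ln(R)$), and your join argument correctly extracts $d(A,B) \leq L$ from the $L$-contraction of $A$ once more than $L$ hyperplanes cross both subcomplexes. But there is a genuine gap in the position step, and it is exactly the coupling you flag in your last paragraph without resolving: nothing in the argument places the two far-apart vertices inside $A^{+L} \cap B^{+L}$. You take $u = x_0$ an \emph{arbitrary} vertex of $A$, and you later assert that ``a bridge of width at most $L$ places the whole of $[u,v]$ within $L$ of both $A$ and $B$.'' This is a non sequitur: $d(A,B) \leq L$ bounds the gap between the two subcomplexes, not the distance from a given point of $A$ to $B$. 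Concretely, let $X$ be the ladder $[0,M] \times [0,1]$ (squares filled in), $A = [0,M] \times \{0\}$ and $B = [0,N] \times \{1\}$ with $N$ much smaller than $M$: there are $N$ hyperplanes transverse to both, $d(A,B)=1$, both subcomplexes are $L$-contracting for a small uniform $L$, and yet $x_0 = (M,0) \in A$ lies at distance $M-N+1$ from $B$. So neither $u$ nor the geodesic $[u,v]$ need meet $B^{+L}$, and your diameter estimate applies to the wrong set.

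The paper's proof repairs precisely this point by localizing everything on the combinatorial bridge. It takes $F = \mathrm{proj}_B(A)$, the projection of $A$ onto $B$, which is a convex subcomplex whose hyperplanes are \emph{exactly} the hyperplanes transverse to both $A$ and $B$ (so $F$ carries at least $N$ hyperplanes). For each vertex $x \in F$, the gate property is used to show that $x$ is the projection onto $B$ of its own projection $y' = \mathrm{proj}_A(x)$, so that the hyperplanes separating $x$ from $y'$ are precisely the hyperplanes separating $A$ from $B$; your join argument then gives $d(x,A) = d(x,y') \leq L$ \emph{uniformly over $F$}, hence $F \subseteq A^{+L} \cap B \subseteq A^{+L} \cap B^{+L}$. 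Finally the degree count is applied to the finite complex $F$ itself, giving $\mathrm{diam}(F) \geq \ln(N-1)/\ln(R)$, and this lower bound transfers to $A^{+L} \cap B^{+L}$ because $F$ is contained in it. If you replace your arbitrary basepoint $x_0 \in A$ by a vertex of this bridge and run your branching count inside $F$, your outline closes; without that localization it does not.
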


\begin{proof}
Let $F$ denote the projection of $A$ onto $B$. Then $F$ is a convex subcomplex whose hyperplanes are precisely the hyperplanes transverse to both $A$ and $B$ (see for instance \cite[Proposition 2.9]{coningoff}). Let $x \in F$ be a vertex. By definition, there exists some vertex $y \in A$ such that $x$ is the projection of $y$ onto $B$. Let $y'$ denote the projection of $x$ onto $A$ and $y''$ the projection of $y'$ onto $B$. Suppose that there exists some hyperplane $J$ separating $x$ and $y''$. Because any hyperplane separating $y'$ and $y''$ must be disjoint from $B$, necessarily $J$ separates $x$ and $y'$. But, since any hyperplane separating $x$ and $y'$ must be disjoint from $A$, it follows that $J$ separates $x$ and $y$. On the other hand, a hyperplane separating $y$ and $x$ must be disjoint from $B$, hence a contradiction. Thus, we have prove that no hyperplane separates $x$ and $y''$, hence $x=y''$. Therefore, $y'$ is the projection of $x$ onto $A$ and $x$ is the projection of $y'$ onto $B$. It follows that the hyperplanes separating $x$ and $y'$ are precisely the hyperplanes separating $A$ and $B$. By looking at the collections of the hyperplanes separating $A$ and $B$, and the hyperplanes intersecting both $A$ and $B$, we conclude that $d(x,A) =d(x,y') \leq L$. Therefore, $F \subset A^{+L} \cap B$. On the other hand, we know that $F$ contains at least $N$ hyperplanes, so our lemma is implied by the following claim. 

\begin{claim}
Let $F$ be a finite CAT(0) cube complex. Suppose that any vertex of $F$ has at most $R \geq 2$ neighbors and that $F$ contains at least $N \geq 2$ hyperplanes. Then $\mathrm{diam}(F) \geq \ln(N-1)/\ln(R)$.
\end{claim}

\noindent
Let $d$ denote the diameter of $F$. From our assumptions, it is clear that $F$ contains at most $R^d+1$ vertices. On the other hand, the number of hyperplanes of $F$ is bounded above by its number of edges, and a fortiori by its number of vertices. Since $F$ contains at least $N$ hyperplanes, it follows that $N \leq R^d+1$, and the inequality we are looking for follows. 
\end{proof}

\noindent
The next statement is our last preliminary lemma before proving Theorem \ref{thm:specialrelhyp}. 

\begin{lemma}\label{lem:smallflats}
Let $\widetilde{X}$ be a locally finite CAT(0) cube complex and $\mathcal{C}$ a collection of convex subcomplexes which are all $P$-contracting for some $P \geq 0$. Suppose that there exists a constant $K \geq 0$ such that the image of every combinatorial isometric embedding $\mathbb{R}^2 \hookrightarrow \widetilde{X}$ lies in the $K$-neighborhood of some subcomplex $C \in \mathcal{C}$. There exist some constants $A,B \geq 0$ such that every $A$-thick flat rectangle of $\widetilde{X}$ lies in the $B$-neighborhood of some subcomplex of $\mathcal{C}$.
\end{lemma}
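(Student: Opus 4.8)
The plan is to argue by contradiction: from a sequence of increasingly thick flat rectangles violating the conclusion I would extract a genuine bi-infinite flat $\mathbb{R}^2 \hookrightarrow \widetilde{X}$, apply the hypothesis to it, and then transfer the resulting control back to the finite rectangles by a hyperplane-counting argument built on $P$-contraction.

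First I would set up the contradiction. If the statement failed, then for every $n \geq 1$ there would be a flat rectangle $R_n : [0,p_n] \times [0,q_n] \hookrightarrow \widetilde{X}$ with $\min(p_n,q_n) \geq n$ (so both side lengths tend to infinity) which is not contained in the $n$-neighbourhood of any $C \in \mathcal{C}$. Recentering each $R_n$ at its central vertex $o_n = R_n(\lfloor p_n/2 \rfloor, \lfloor q_n/2 \rfloor)$ by an isometry $g_n$ and using local finiteness (so that, once the centers are moved to a fixed vertex, balls of fixed radius realise only finitely many isomorphism types), I would extract a subsequence so that the $g_n R_n$ converge to a combinatorial isometric embedding $F : \mathbb{R}^2 \hookrightarrow \widetilde{X}$ of a full flat; concretely, $F$ and $g_nR_n$ agree on the ball $B(o,r_n)$ with $r_n \to \infty$. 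By hypothesis $F \subset C^{+K}$ for some $C \in \mathcal{C}$.

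Next I would show that \emph{only finitely many hyperplanes of $F$ avoid $C$}. Write $\mathcal{H}_\infty,\mathcal{V}_\infty$ for the two bi-infinite families crossing $F$; each is nested, so any sub-collections form a join of hyperplanes. If $\mathcal{H}_\infty \setminus \mathcal{H}(C)$ were infinite, then $P$-contraction applied to the join $(\mathcal{H}_\infty \setminus \mathcal{H}(C),\ \mathcal{V}_\infty \cap \mathcal{H}(C))$ forces $\#(\mathcal{V}_\infty \cap \mathcal{H}(C)) \leq P$, whence $\mathcal{V}_\infty \setminus \mathcal{H}(C)$ is infinite and the join $(\mathcal{V}_\infty \setminus \mathcal{H}(C),\ \mathcal{H}_\infty \cap \mathcal{H}(C))$ forces $\#(\mathcal{H}_\infty \cap \mathcal{H}(C)) \leq P$. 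Then at most $2P$ hyperplanes cross both $F$ and $C$, so $\mathrm{proj}_C(F)$ has diameter at most $2P$; but $F \subset C^{+K} \subset \mathrm{proj}_C(F)^{+K}$ would make $F$ bounded, which is absurd. The same reasoning rules out $\mathcal{V}_\infty \setminus \mathcal{H}(C)$ infinite, proving the claim. Now I come to the transfer, which I expect to be the main obstacle: controlling $F$ only controls the central part of $R_n$, and the boundary strips must be handled. Since $g_nR_n$ agrees with $F$ on an arbitrarily large central ball, for $n$ large both families $\mathcal{H}_n,\mathcal{V}_n$ of $g_nR_n$ satisfy $\#(\mathcal{H}_n \cap \mathcal{H}(C)) > P$ and $\#(\mathcal{V}_n \cap \mathcal{H}(C)) > P$. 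Feeding this into the two $P$-contraction joins $(\mathcal{H}_n \setminus \mathcal{H}(C),\ \mathcal{V}_n \cap \mathcal{H}(C))$ and $(\mathcal{V}_n \setminus \mathcal{H}(C),\ \mathcal{H}_n \cap \mathcal{H}(C))$ gives $\#(\mathcal{H}_n \setminus \mathcal{H}(C)) \leq P$ and $\#(\mathcal{V}_n \setminus \mathcal{H}(C)) \leq P$, so at most $2P$ hyperplanes of $g_nR_n$ avoid $C$.

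Finally I would propagate a single central estimate to the whole rectangle. The center $o$ lies on $F \subset C^{+K}$, so $d(o,C) \leq K$. For an arbitrary vertex $w \in g_nR_n$ and a hyperplane $J$ separating $w$ from $C$ (hence $J \notin \mathcal{H}(C)$): if $J$ separates $w$ from $o$ then, by the flat-rectangle structure (the separating hyperplanes of two of its vertices are exactly the rectangle-hyperplanes between them, which cross $g_nR_n$), $J$ is one of the $\leq 2P$ exceptional hyperplanes; otherwise $J$ separates $o$ from $C$, and there are at most $d(o,C) \leq K$ of those. Hence $d(w,C) \leq K + 2P$ for every vertex $w$, i.e. $g_nR_n \subset C^{+(K+2P)}$, and applying $g_n^{-1}$ (using that $\mathcal{C}$ is invariant under the ambient action, so $g_n^{-1}C \in \mathcal{C}$) gives $R_n \subset (g_n^{-1}C)^{+(K+2P)}$. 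This contradicts the choice of $R_n$ as soon as $n > K+2P$, so the constants $A$ (the thickness needed to make the limit extraction and the counts valid) and $B = K+2P$ exist. The genuinely delicate points are thus the two noted above: the counting step that caps at $2P$ the hyperplanes of the rectangle avoiding $C$ and then rigidly propagates the central bound, and the recentering in the limit extraction, which relies on local finiteness together with the cocompact action and the invariance of $\mathcal{C}$ available in the applications.
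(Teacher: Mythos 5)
Your proposal is correct in substance and has the same skeleton as the paper's proof: argue by contradiction, extract from increasingly thick flat rectangles a limiting combinatorial flat, and convert the hypothesis on flats into the bound $B=K+2P$ via the same $P$-contraction count (at most $2P$ hyperplanes of the rectangle avoid $C$, plus at most $K$ separating a basepoint from $C$, then propagate to every vertex). The organisation, however, is a mirror image of the paper's, and the difference matters. The paper first proves an unconditional dichotomy for \emph{every} rectangle $R$ and \emph{every} $C \in \mathcal{C}$ --- either $R \subset C^{+K+2P}$, or $\mathrm{diam}\left( R \cap C^{+K} \right) \leq 2(P+K)$ --- and then reaches the contradiction at the limit flat $R_\infty$, which inherits the uniform diameter bound with every $C^{+K}$ and hence can lie in no $C^{+K}$. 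You instead reach the contradiction back at the rectangles: you fix the $C$ serving the limit flat $F$, show that all but finitely many hyperplanes of each family of $F$ cross $C$, transfer this to $g_nR_n$ through the shared central ball, and propagate $d(o,C) \leq K$ outward. Both hyperplane counts are identical; what the paper's order of quantifiers buys is that it never needs to pull $C$ back along recentering isometries, whereas your last step uses $g_n^{-1}C \in \mathcal{C}$, i.e.\ an isometric action leaving $\mathcal{C}$ invariant --- data not present in the statement, as you honestly flag. It is worth noting that the paper is not actually better off on this point: its unrecentered limit of $(R_n)$ is nonempty only if the rectangles meet a fixed ball, which the hypotheses do not guarantee either (glue larger and larger flat squares along a ray to get a locally finite CAT(0) cube complex with arbitrarily thick rectangles, no isometrically embedded plane, and $\mathcal{C} = \emptyset$ satisfying the hypothesis vacuously). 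So both arguments really live in the setting of the paper's application --- a cocompact action with an invariant family $\mathcal{C}$, as in the proof of Theorem \ref{thm:specialrelhyp} --- and within that setting your proof is complete; your version makes explicit the equivariance that the paper's limit step uses tacitly, at the cost of the extra matching step between $F$ and the central balls of the $g_nR_n$, which the paper's dichotomy-first route avoids.
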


\begin{proof}
Our lemma will follow from the next claim.

\begin{claim}
Let $K \geq 0$ and $L > 2P $ be two constants, $R : [0,p] \times [0,q] \hookrightarrow \widetilde{X}$ an $L$-thick flat rectangle, and $C \in \mathcal{C}$ a subcomplex. Either $R$ lies in $C^{+K+2P}$, or $R \cap C^{+K}$ has diameter at most $2(P+K)$. 
\end{claim}

\noindent
Suppose that $\mathrm{diam} \left( R \cap C^{+K} \right) > 2(P+K)$. Fix two vertices $x,y \in R \cap C^{+K}$ satisfying $d(x,y) > 2(P+K)$, and let $x',y' \in C$ denote respectively the projections of $x,y$ onto $C$. Notice that a hyperplane separating $x$ and $y$ separates $x$ and $x'$, or $y$ and $y'$, or $x'$ and $y'$, so that there must exist more than $2P$ hyperplanes separating $x$ and $y$ and intersecting $C$. For convenience, let $\mathcal{V}$ (resp. $\mathcal{H}$) denote the hyperplanes intersecting $[0,p] \times \{0\}$ (resp. $\{0\} \times [0,q]$). Our previous observation implies that more than $P$ hyperplanes of $\mathcal{V}$ or of $\mathcal{H}$ intersect $C$; up to switching $\mathcal{H}$ and $\mathcal{V}$, say that we are in the former case. Since $C$ is $P$-contracting, at most $P$ hyperplanes of $\mathcal{H}$ may be disjoint from $C$. Next, because $R$ is $L$-thick with $L >2P $, we deduce that $\mathcal{H}$ contains more than $P$ hyperplanes intersecting $C$, so that, once again because $C$ is $P$-contracting, at most $P$ hyperplanes of $\mathcal{V}$ may be disjoint from $C$. Now, let $z \in R$ be a vertex. Notice that a hyperplane separating $z$ from $C$ separates either $x$ and $x'$ or $x$ and $z$. But there exist at most $K$ hyperplanes separating $x$ and $x'$, and at most $2P$ hyperplanes disjoint from $C$ separating $x$ and $z$ (these hyperplanes belonging necessarily to $\mathcal{H} \cup \mathcal{V}$). Therefore, $d(z,C) \leq K+2P$. Thus, we have proved that $R \subset C^{+K+2P}$, concluding the proof of our claim.

\medskip \noindent
Now we are ready to prove our lemma. Suppose that, for $K,L \geq 0$, there exists an $L$-thick flat rectangle which is not included in the $K$-neighborhood of any subcomplex of $\mathcal{C}$. As a consequence of our previous claim, we know that, for every $K \geq 0$ and every $L>2P$, there exists an $L$-thick flat rectangle $R$ satisfying $\mathrm{diam} \left( R \cap C^{+K} \right) \leq 2(P+K)$ for every $C \in \mathcal{C}$. So, fixing some $K \geq 0$, for every $n>2P$, there exists an $n$-thick flat rectangle $R_n$ satisfying $\mathrm{diam} \left( R_n \cap C^{+K} \right) \leq 2(P+K)$ for every $C \in \mathcal{C}$. Since $\widetilde{X}$ is locally finite, up to taking a subsequence, we may suppose without loss of generality that $(R_n)$ converges to some subcomplex $R_{\infty} \subset \widetilde{X}$, ie., for every ball $B$ the sequence $(R_n \cap B)$ is eventually constant to $R_{\infty} \cap B$. Notice that $R_{\infty}$ is isometric to the square complex $\mathbb{R}^2$ and that $\mathrm{diam} \left( R_{\infty} \cap C^{+K} \right) \leq 2(P+K)$ for every $C \in \mathcal{C}$. Thus, we have proved that, for every $K \geq 0$, there exists a combinatorial isometric embedding $\mathbb{R}^2 \hookrightarrow \widetilde{X}$ whose image is not contained in the $K$-neighborhood of any subcomplex $C \in \mathcal{C}$. This concludes the proof of our lemma. 
\end{proof}

\begin{proof}[Proof of Theorem \ref{thm:specialrelhyp}.]
Suppose that $G$ is hyperbolic relative to $\mathcal{H}$. Then the subgroups of $\mathcal{H}$ are convex-cocompact according to \cite{SageevWiseCores}; the collection $\mathcal{H}$ is almost malnormal according to \cite[Theorem 1.4]{OsinRelativeHyp}; and every non cyclic abelian subgroup of $G$ is contained in a conjugate of some group of $\mathcal{H}$ according to \cite[Theorem 4.19]{OsinRelativeHyp}. 

\medskip \noindent
Conversely, suppose that our three conditions are satisfied. Let $X$ be a compact special cube complex whose fundamental group is $G$. Fix a basepoint $x \in X$, and identify $G$ and the (one-skeleton of the) universal cover of $X$ with $\mathcal{D}(X,x)$ and $M(X,x)$ respectively. Because $\mathcal{D}(X,x)$ is naturally embedded in $M(X,x)$, one may naturally identify subsets of $\mathcal{D}(X,x)$ with subsets of $M(X,x)$. Let $\mathcal{C}$ denote the set of the convex hulls in $M(X,x)$ of the cosets of the subgroups of $\mathcal{H}$. 

\begin{claim}\label{claim:fine}
There exists a constant $K \geq 0$ such that, for all distinct $C_1,C_2 \in \mathcal{C}$, at most $K$ hyperplanes intersect both $C_1$ and $C_2$. 
\end{claim}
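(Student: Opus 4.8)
The plan is to run a squeeze argument between two estimates already established in this section: many common hyperplanes force the $L$-neighborhoods of $C_1$ and $C_2$ to overlap in a set of large diameter (Lemma~\ref{lem:fellowtravel}), whereas almost malnormality forbids precisely such a large overlap (Lemma~\ref{lem:malnormalS}). The whole point is to choose a single contracting constant that works simultaneously for every member of $\mathcal{C}$, so that both lemmas can be fed the same $L$.

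First I would record that all subcomplexes in $\mathcal{C}$ are $P$-contracting for one common constant $P$. Since $X$ is special its fundamental group $G$ embeds into a right-angled Artin group by Theorem~\ref{thm:embedRAAG}, hence is torsion-free; therefore the almost malnormality of $\mathcal{H}$ means each $H_i$ meets its distinct conjugates \emph{trivially}, i.e.\ each $H_i$ is a malnormal convex-cocompact subgroup. Proposition~\ref{prop:malnormalcontracting} then makes the convex hull of each $H_i$ in $M(X,x)$ contracting, and as there are only finitely many $H_i$ we may take a common constant $P$. Because the convex hull of a coset $gH_i$ is the $g$-translate of the convex hull of $H_i$ and contraction is an isometry invariant, \emph{every} $C \in \mathcal{C}$ is $P$-contracting. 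Set $L = \max(P,2)$, so that each $C \in \mathcal{C}$ is in particular $L$-contracting.

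Next I would invoke the bounded-intersection estimate. Since $X$ is compact, $G = \mathcal{D}(X,x)$ acts geometrically, hence metrically properly, on $M(X,x)$, and by Lemma~\ref{lem:convexhullgeom} each $H_i$ acts geometrically on the convex hull $S_i$ of its orbit. Thus Lemma~\ref{lem:malnormalS} applies with this $L$ and yields a constant $K_0$ bounding $\mathrm{diam}(S_i^{+L} \cap g S_j^{+L})$ whenever $i \neq j$, or $i = j$ and $g \notin H_i$. Given distinct $C_1, C_2 \in \mathcal{C}$, write $C_1 = g_1 S_{i_1}$ and $C_2 = g_2 S_{i_2}$; translating by $g_1^{-1}$ and setting $g = g_1^{-1} g_2$, the condition $C_1 \neq C_2$ is exactly the case distinction of Lemma~\ref{lem:malnormalS} (either $i_1 \neq i_2$, or $i_1 = i_2$ with $g \notin H_{i_1}$), so $\mathrm{diam}(C_1^{+L} \cap C_2^{+L}) \leq K_0$.

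Finally I would combine the two. The graph $M(X,x) = \widetilde{X}^{(1)}$ is locally finite with uniformly bounded vertex degree $R \geq 2$ because $X$ is compact. Let $N$ be the number of hyperplanes transverse to both $C_1$ and $C_2$. If $N \geq L = \max(L,2)$, then Lemma~\ref{lem:fellowtravel} gives $\ln(N-1)/\ln(R) \leq \mathrm{diam}(C_1^{+L} \cap C_2^{+L}) \leq K_0$, whence $N \leq R^{K_0} + 1$; otherwise $N < L$. In every case $N \leq \max(L, R^{K_0}+1)$, so $K = \max(L, R^{K_0}+1)$ is the desired constant. The substantive content has already been absorbed into the two lemmas, so the main obstacle here is purely one of bookkeeping: upgrading almost malnormality to malnormality via torsion-freeness so that Proposition~\ref{prop:malnormalcontracting} applies, and checking that distinct cosets genuinely fall into the excluded case of Lemma~\ref{lem:malnormalS} — once the uniform constant $L$ is fixed, the assembly is routine.
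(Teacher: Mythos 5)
Your proof is correct and follows exactly the route the paper takes: its proof of Claim~\ref{claim:fine} is the one-line combination of Proposition~\ref{prop:malnormalcontracting}, Lemma~\ref{lem:fellowtravel} and Lemma~\ref{lem:malnormalS} that you carry out in detail. The points you make explicit --- upgrading almost malnormality to malnormality via torsion-freeness (from the embedding into a right-angled Artin group), extracting a single contracting constant over the finitely many $G$-orbits in $\mathcal{C}$, and checking that distinct cosets fall into the excluded case of Lemma~\ref{lem:malnormalS} --- are precisely what the paper leaves implicit, and you handle them correctly.
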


\noindent
Our claim is a consequence of Lemma \ref{lem:fellowtravel}, which applies thanks to Proposition \ref{prop:malnormalcontracting}, combined with Lemma \ref{lem:malnormalS}. 

\begin{claim}
An edge of $M(X,x)$ belongs to only finitely many subcomplexes of $\mathcal{C}$.
\end{claim}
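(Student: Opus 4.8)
The plan is to reduce the statement to vertices and then count cosets using local finiteness, so that neither the contracting property nor almost malnormality is actually needed here---only convex-cocompactness of the subgroups in $\mathcal{H}$. First I would observe that if an edge $e$ of $M(X,x)$ is contained in a subcomplex $C \in \mathcal{C}$, then either fixed endpoint $u$ of $e$ lies in $C$ as well; hence it suffices to bound the number of subcomplexes of $\mathcal{C}$ containing a single prescribed vertex $u$.

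Next I would exploit the group structure. Every $C \in \mathcal{C}$ is by definition the convex hull of a coset $gH_j$ of some $H_j \in \mathcal{H}$, and since $G = \mathcal{D}(X,x)$ acts on $M(X,x)$ by cubical isometries (as the deck group of $\widetilde{X}$), preserving convex hulls, we may write $C = g \cdot Y_j$, where $Y_j$ denotes the convex hull of the vertex set $H_j \cdot \epsilon(x)$. By Lemma \ref{lem:convexhullgeom}, each convex-cocompact subgroup $H_j$ acts geometrically, in particular cocompactly, on $Y_j$; and since $\mathcal{H}$ is finite, there is a uniform constant $C_0 \geq 0$ such that every vertex of $Y_j$ lies within distance $C_0$ of $H_j \cdot \epsilon(x)$, simultaneously for all $j$.

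Now suppose $u \in C = g Y_j$. Then $g^{-1}u \in Y_j$, so there is some $h \in H_j$ with $d(g^{-1}u, h \cdot \epsilon(x)) \leq C_0$. Setting $g' = gh$, which is another representative of the coset $gH_j$, and applying the isometry $g$ together with the identity $g' \cdot \epsilon(x) = g'$, this inequality becomes $d(u, g') \leq C_0$. Thus every coset whose convex hull contains $u$ admits a representative $g'$ lying in the ball $B(u, C_0)$ of $M(X,x)$, and the coset is recovered from the pair $(g', j)$ as $g' H_j$. Since $X$ is compact, $\widetilde{X}$---and hence $M(X,x)$---is locally finite, so $B(u, C_0)$ contains only finitely many vertices, whence $G \cap B(u, C_0)$ is finite. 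Therefore the number of subcomplexes of $\mathcal{C}$ containing $u$ is at most $|\mathcal{H}| \cdot |G \cap B(u, C_0)| < \infty$, and the claim follows.

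The argument is essentially bookkeeping, so I do not expect a genuine obstacle; the two points requiring care are the uniformity of $C_0$ across the finitely many peripheral subgroups (handled by finiteness of $\mathcal{H}$) and the local finiteness of $M(X,x)$, which is precisely where the compactness of $X$ enters.
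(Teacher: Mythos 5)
Your argument is correct, and it takes a genuinely different route from the paper's. The paper proves this claim by contradiction from the claim immediately preceding it (Claim~\ref{claim:fine}, that at most $K$ hyperplanes cross two distinct members of $\mathcal{C}$): if infinitely many subcomplexes of $\mathcal{C}$ contained a given edge, then by local finiteness and a pigeonhole argument two \emph{distinct} members of $\mathcal{C}$ would have identical intersections with an arbitrarily large ball $B(n)$ centered at an endpoint of that edge, and for $n$ large this forces them to share more than $K$ hyperplanes, a contradiction. That route therefore ultimately leans on the contraction of the hulls (Proposition~\ref{prop:malnormalcontracting}) and malnormality (via Lemmas~\ref{lem:fellowtravel} and~\ref{lem:malnormalS}), which are already available at that point in the proof, so the paper's version costs only two lines. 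Your argument, as you correctly observe, needs none of that: convex-cocompactness gives, via Lemma~\ref{lem:convexhullgeom}, a uniform constant $C_0$ (uniform because $\mathcal{H}$ is finite) such that each hull $Y_j$ lies in the $C_0$-neighborhood of the orbit $H_j \cdot \epsilon(x)$; hence any $C \in \mathcal{C}$ through a vertex $u$ is the hull of a coset with a representative in $B(u,C_0)$, and local finiteness of $M(X,x)$ (from compactness of $X$) bounds the number of such subcomplexes by $\# \mathcal{H} \cdot \# \left( G \cap B(u,C_0) \right)$. The reduction from edges to vertices is immediate since the members of $\mathcal{C}$ are subcomplexes. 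What each approach buys: yours is more elementary and more general, isolating convex-cocompactness as the only hypothesis actually used, and it yields an explicit uniform bound on the multiplicity of the collection rather than mere finiteness per edge; the paper's is shorter in context and packages both finiteness claims as consequences of the single geometric fact that distinct hulls cannot fellow-travel. Either version suffices for the fineness argument that follows.
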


\noindent
Suppose by contradiction that there exist infinitely many subcomplexes of $\mathcal{C}$ containing a given edge $e$. Because $M(X,x)$ is locally finite, this implies that, for every $n \geq 1$, there exist distinct subcomplexes $C_1,C_2 \in \mathcal{C}$ satisfying $C_1 \cap B(n) = C_2 \cap B(n)$, where $B(n)$ is a ball of radius $n$ centered at an endpoint of $e$. But such a phenomenon would contradict our previous claim. 

\begin{claim}\label{claim:flatneighborhood}
There exists some constant $K \geq 0$ such that the image of every combinatorial isometric embedding $\mathbb{R}^2 \hookrightarrow M(X,x)$ lies in the $K$-neighborhood of some subcomplex of $\mathcal{C}$. 
\end{claim}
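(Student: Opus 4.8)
The plan is to show that an arbitrary combinatorial flat $F \colon \mathbb{R}^2 \hookrightarrow M(X,x)$ is \emph{periodic up to bounded error}, to trap the corresponding $\mathbb{Z}^2$ inside a peripheral coset using the third hypothesis, and finally to compare $F$ with the convex hull of that coset. First I would produce a free abelian subgroup $P=\langle a,b\rangle\cong\mathbb{Z}^2$ of $G$ adapted to $F$. Running along the two bi-infinite rulings of $F$ through $F(0,0)$ and using that $X$ is compact (so the terminus map takes only $\#V(X)$ values, equivalently there are only finitely many $G$-orbits of hyperplanes), the pigeonhole argument underlying Theorem~\ref{thm:periodicflat} and Fact~\ref{fact:flatfreeabeliansub} yields two spherical diagrams $a,b$, read off one period in the horizontal and vertical directions respectively, with periods bounded by $\#V(X)$. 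Since every horizontal hyperplane of $F$ is transverse to every vertical one, Lemma~\ref{lem:transverseadjacent} shows that every color of $a$ is adjacent to every color of $b$; hence, through the embedding $\mathcal{D}(X,x)\hookrightarrow A(\Delta)$ of Theorem~\ref{thm:embedRAAG}, $a$ and $b$ commute and generate $P\cong\mathbb{Z}^2$. As $a,b$ act as translations along the two rulings, $F'=\mathrm{Min}(P)$ is a flat that $P$ stabilises cocompactly, at Hausdorff distance at most a uniform constant $K_1$ from $F$ and with a $P$-fundamental domain of diameter at most a uniform constant $D$ (both controlled by the periods, hence by $\#V(X)$).

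Next, $P\cong\mathbb{Z}^2$ is abelian and not virtually cyclic, so the third hypothesis provides $g\in G$ and $H_i\in\mathcal{H}$ with $P\subseteq gH_ig^{-1}$, equivalently $Pg\subseteq gH_i$. Thus the $P$-orbit of the vertex $g$ lies in the coset $gH_i$, and I set $C=\mathrm{Hull}(gH_i)\in\mathcal{C}$. The group $P$ preserves $C$, since it permutes the vertices of the coset $gH_i$. Because $C$ is convex and $P$ acts properly on it, the restricted minimal set $\mathrm{Min}_C(P)$ is nonempty; as the translation lengths of the elements of $P$ are intrinsic to $P$ and are realised on the convex subcomplex $C$, one has $\mathrm{Min}_C(P)=F'\cap C$, which is therefore a nonempty, two-dimensional, $P$-cocompact sub-flat of $F'$ contained in $C$.

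Finally I would compare $F'$ with $C$. Both $F'$ and its sub-flat $F'\cap C$ are $P$-cocompact convex subsets of the same Euclidean-type flat $F'$, on which $P$ acts as a rank-two lattice; a $P$-invariant cocompact subset of $F'$ is $D$-dense in $F'$, so $F'\subseteq (F'\cap C)^{+D}\subseteq C^{+D}$. Consequently
$$F\subseteq F'^{+K_1}\subseteq C^{+(K_1+D)},$$
and since $K:=K_1+D$ depends only on $X$ (through $\#V(X)$ and the finitely many peripheral convex hulls, which are uniformly contracting by Proposition~\ref{prop:malnormalcontracting}), this establishes the claim.

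The main obstacle is the very first step: extracting from an \emph{a priori non-periodic} flat a cocompact $\mathbb{Z}^2$ whose period and whose Hausdorff distance to $F$ are bounded \emph{uniformly} in $F$. The pigeonhole over the finite vertex/hyperplane data of $X$ controls the period, but verifying that the resulting diagrams genuinely act as commuting translations stabilising a flat $F'$ uniformly close to $F$ is the delicate point; once this periodic replacement is in hand, everything afterwards is soft CAT(0) cube-complex geometry combined with the three hypotheses.
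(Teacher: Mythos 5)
Your proposal has a genuine gap at its very first step, and it is not a checkable technicality but the central obstruction. The pigeonhole argument does produce spherical diagrams $a$ and $b$ with period bounded by $\#V(X)$, and via Theorem \ref{thm:embedRAAG} they commute; but they do \emph{not} ``act as translations along the two rulings'' of $F$. Concretely, writing the vertices on the horizontal ray as $u_{i+1}=u_iA_i$ and setting $a=u_1A_1u_1^{-1}$, one computes $a\cdot u_2=u_1A_1A_1$, which equals $u_3$ only if $A_2=A_1$, i.e.\ only if $F$ is already periodic. For an a priori non-periodic flat, the axis of $a$ agrees with the ruling of $F$ along one fundamental period and may then diverge from it completely, so $F'=\mathrm{Min}(P)$ need not be at bounded --- let alone uniformly bounded --- Hausdorff distance from $F$: a flat can coincide with the $P$-periodic flat on an initial rectangle and behave arbitrarily afterwards. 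Everything downstream (the identification $\mathrm{Min}_C(P)=F'\cap C$, the $D$-density of the orbit, the final constant $K_1+D$) rests on this false step, so the argument does not go through.

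The paper circumvents periodicity entirely: running the pigeonhole infinitely often along each axis, it extracts infinitely many spherical diagrams $A_1,A_2,\ldots$ and $B_1,B_2,\ldots$ and forms the (possibly infinitely generated, possibly non-abelian) subgroup $\mathscr{W}=W\langle A_1,B_1,A_2,B_2,\ldots\rangle W^{-1}$, which splits as a direct product of two infinite groups and whose orbit of a single vertex is coarsely the whole quadrant --- no periodic replacement is needed. Note also that your hypothesis (3) cannot be applied directly to this group, since the factors need not be abelian; the paper proves a separate Fact, combining the abelian hypothesis with almost malnormality of $\mathcal{H}$, showing that \emph{any} subgroup splitting as a product of two infinite groups lies in a peripheral conjugate. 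The four quadrants are then matched to the same $C\in\mathcal{C}$ via Lemma \ref{lem:malnormalS}, and --- a second point your outline underestimates --- the uniformity of $K$ is not a consequence of bounded periods: the paper first obtains a non-uniform neighborhood (using convexity of $\ell^\infty$-neighborhoods) and then bootstraps it to a uniform constant via the contraction Lemma \ref{lem:geodcontracting}, which is available because the hulls in $\mathcal{C}$ are uniformly contracting by Proposition \ref{prop:malnormalcontracting}. If you wish to salvage your outline, replace $P$ and $\mathrm{Min}(P)$ by $\mathscr{W}$ and its orbit, and replace the Min-set comparison by these convexity and contraction arguments.
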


\noindent
Let $\mathbb{R}^2 \hookrightarrow M(X,x)$ be a combinatorial isometric embedding. For convenience, we identify $\mathbb{R}^2$ with its image in $M(X,x)$. First of all, we consider the orthant $[0,+ \infty)^2$. Because the terminus of a diagram may take only finitely many values, the ray $[0,+ \infty) \times \{0\}$ must contain infinitely many vertices $u_1,u_2, \ldots$ having the same terminus; similarly, the ray $\{0 \} \times [0,+ \infty)$ must contain infinitely many vertices $v_1,v_2, \ldots$ having the same terminus. For every $i \geq 1$, let $A_i$ denote the diagram labelling the unique geodesic in $\mathbb{R}^2$ between $u_i$ and $u_{i+1}$ and $B_i$ the diagram labelling the unique geodesic in $\mathbb{R}^2$ between $v_i$ and $v_{i+1}$; notice that $A_i$ and $B_i$ are spherical diagrams. We also denote by $W$ the diagram corresponding to the vertex of $\mathbb{R}^n$ whose projections onto the axes are the $u_1$ and $v_1$. Because any hyperplane intersecting $[0,+ \infty) \times \{0\}$ is transverse to any hyperplane intersecting $\{ 0 \} \times [0,+ \infty)$, it follows that, for all $i,j \geq 1$, any color of the word $A_i$ is adjacent to any color of the word $B_j$. As a consequence, the subgroup 
$$\mathscr{W}= W \cdot \langle A_1,B_1,A_2,B_2, \ldots \rangle \cdot W^{-1}$$
naturally decomposes as $W \langle A_1, \ldots \rangle W^{-1} \times W \langle B_1, \ldots \rangle W^{-1}$. It follows from the next observation that $\mathscr{W}$ is included in a conjugate of some subgroup of $\mathcal{H}$.

\begin{fact}
If a subgroup of $G$ decomposes as a direct product of two infinite subgroups, then it must be included in a conjugate of some subgroup of $\mathcal{H}$.
\end{fact}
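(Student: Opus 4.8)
The plan is to exploit the torsion-freeness of $G$ together with the almost malnormality hypothesis to reduce everything to a single non virtually cyclic abelian subgroup. First I would record that $G$ is torsion-free: being special, it embeds into a right-angled Artin group by Theorem \ref{thm:embedRAAG}, and such groups are torsion-free. Consequently, if $H \leq G$ splits as a direct product $A \times B$ of two infinite subgroups, then every non-trivial element of $A$ or of $B$ has infinite order.

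Next I would build a reference peripheral subgroup. Pick non-trivial elements $a \in A$ and $b \in B$. Since $a$ and $b$ commute, both have infinite order, and satisfy $\langle a \rangle \cap \langle b \rangle \subseteq A \cap B = \{1\}$, the subgroup $\langle a,b \rangle$ is free abelian of rank two, hence not virtually cyclic. The third hypothesis then provides a conjugate $P = g H_i g^{-1}$ of some $H_i \in \mathcal{H}$ containing $\langle a,b \rangle$.

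The crucial step is to propagate membership in $P$ to all of $H$ using almost malnormality. Because $G$ is torsion-free, the almost malnormality of $\mathcal{H}$ says precisely that two distinct conjugates of subgroups of $\mathcal{H}$ intersect trivially: if $g_1 H_i g_1^{-1}$ and $g_2 H_j g_2^{-1}$ have infinite intersection, then setting $h = g_1^{-1} g_2$ shows that $H_i \cap h H_j h^{-1}$ is infinite, which forces $i = j$ and $h \in H_i$, whence the two conjugates coincide. Now fix any $a' \in A \setminus \{1\}$; as above $\langle a', b \rangle \cong \mathbb{Z}^2$ lies in some conjugate $P'$ of a subgroup of $\mathcal{H}$. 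Since both $P$ and $P'$ contain the infinite cyclic group $\langle b \rangle$, their intersection is infinite, so $P' = P$ and therefore $a' \in P$. This gives $A \subseteq P$, and the symmetric argument with the roles of $a$ and $b$ exchanged yields $B \subseteq P$. Hence $H = \langle A, B \rangle \subseteq P = g H_i g^{-1}$, as desired.

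The argument is purely algebraic and does not invoke the convex-cocompactness hypothesis. The only genuine subtlety is making almost malnormality perform the gluing, and this works cleanly only because torsion-freeness upgrades ``finite intersection'' to ``trivial intersection'', so that a single infinite cyclic subgroup suffices to certify that two peripheral conjugates coincide; I expect this upgrade to be the one point worth stating explicitly rather than a serious obstacle.
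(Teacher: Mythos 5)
Your proof is correct and follows essentially the same route as the paper's: both place each rank-two free abelian subgroup $\langle a',b\rangle$ (resp. $\langle a,b'\rangle$) into a peripheral conjugate via the third hypothesis, then use almost malnormality, certified by a shared infinite cyclic subgroup, to identify all of these conjugates with a single $P$. The only cosmetic difference is the last step, where you deduce $a' \in P$ directly from $\langle a',b\rangle \subseteq P' = P$, while the paper takes the unnecessary detour $k_1 = k_1^2k_2 \cdot k_1^{-1}k_2^{-1} \in H \cdot H = H$.
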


\noindent
Let $K_1 \times K_2 \subset G$ be such a subgroup. For all non-trivial elements $k_1 \in K_1$ and $k_2 \in K_2$, there exists a conjugate $C(k_1,k_2)$ of some subgroup of $\mathcal{H}$ containing $\langle k_1,k_2 \rangle$ since this subgroup is a free abelian group of rank two. Notice that, for all non-trivial elements $k_1,k_2 \in K_1$ and $k_3 \in K_2$, the intersection $C(k_1,k_3) \cap C(k_2,k_3)$ contains $\langle k_3 \rangle$, hence $C(k_1,k_3)=C(k_2,k_3)$ since the collection $\mathcal{H}$ is malnormal. A similar statement holds if we switch the roles of $K_1$ and $K_2$. Therefore, for all non-trivial elements $k_1,k_2 \in K_1$ and $k_3,k_4 \in K_2$, one has
$$C(k_1,k_3)=C(k_2,k_3)=C(k_2,k_4).$$
Let $H$ denote this common conjugate. Now, for all non-trivial elements $k_1 \in K_1$ and $k_2 \in K_2$, one has $k_1=k_1^2k_2 \cdot k_1^{-1}k_2^{-1} \in H \cdot H = H$ and $k_2 = k_1k_2^2 \cdot k_1^{-1}k_2^{-1} \in H \cdot H =H$. Therefore, $K_1$ and $K_2$ are included in $H$, hence $K_1 \times K_2 \subset H$. This concludes the proof of our fact.  

\medskip \noindent
So there exist some $H \in \mathcal{H}$ and $g \in G$ such that $\mathscr{W} \subset H^g$. If $C$ denotes the convex hull of the coset $gH$, so that $C$ belongs to $\mathcal{C}$, notice that
$$\mathscr{W} \cdot W \subset H^g \cdot W \subset \left( H^g \cdot g \right)^{+d(g,W)} = C^{+d(g,W)}.$$
Let $Y$ denote the $d(g,W)$-neighborhood of $C$ with respect to the $\ell^{\infty}$-metric, ie., the metric obtained by extending the $\ell^{\infty}$-norms defined on each cube of $\widetilde{X}$. According to \cite[Corollary 3.5]{packing}, $Y$ is a convex subcomplex. Moreover, because $\widetilde{X}$ is finite-dimensional, our new metric is quasi-isometric to the old one, so that there exists a constant $L \geq 0$ such that $Y \subset C^{+L}$. Finally, since $[u_1,+ \infty) \times [v_1,+ \infty)$ clearly lies in the convex hull of $\mathscr{W} \cdot W$, we deduce that $[u_1,+ \infty) \times [v_1,+ \infty) \subset Y \subset C^{+L}$. A fortiori, $[0,+ \infty) \times [0,+ \infty)$ lies in a neighborhood of $C$. Similarly, one finds some $C' \in \mathcal{C}$ such that $[0,+ \infty) \times [0,- \infty)$ lies in a neighbor of $C'$. It implies that there exists some $K \geq 0$ such that $\mathrm{diam} \left( C^{+K} \cap (C')^{+K} \right)$ is infinite, hence $C=C'$ as a consequence of Lemma \ref{lem:malnormalS}. Arguing similarly with $[0,- \infty)^2$ and next with $[0,-\infty) \times [0,+ \infty)$, we deduce that $\mathbb{R}^2$ lies in a neighborhood of $C$. 

\medskip \noindent
Now, we need to show that the constant defining this neighborhood can be chosen uniformly for every flat subcomplex. Fix some $n \geq 1$ which is sufficiently large so that the distance between the projections of $(0,0)$ and $(0,n)$ (resp. $(0,0)$ and $(n,0)$) is at least $L$, where $L$ is the constant provided by Lemma \ref{lem:geodcontracting}. (Notice that $L$ can be chosen independently of $C$ since $\mathcal{C}$ contains only finitely many $G$-orbits of subcomplexes.) Therefore, if $k$ denotes the distance from $(0,0)$ to $C$, then $(0,k)$ and $(k,0)$ must be at distance at most $L$ from the projection $p$ of $(0,0)$ onto $C$. Therefore, 
$$2k = d((k,0),(0,k)) \leq d((k,0),p)+d(p,(0,k)) \leq 2L$$
hence $d((0,0),C) = k \leq L$. Consequently, 
$$\mathscr{W} \cdot (0,0) \subset \mathscr{W} \cdot C^{+L} \subset H^g \cdot C^{+L} \subset C^{+L}.$$
Once again according to \cite[Corollary 3.5]{packing}, the $L$-neighborhood $Z$ of $C$ with respect to the $\ell^{\infty}$-metric is a convex subcomplex, so that the convex hull of $\mathscr{W} \cdot (0,0)$ must be included in $Z$ since $C^{+L}$ is contained in $Z$. On the other hand, because $\widetilde{X}$ is finite-dimensional, this new metric is quasi-isometric to the old one, so that there exists a constant $K$ depending only on $L$ such that $Y \subset C^{+K}$. We conclude that $\mathbb{R}^2$, which is clearly included in the convex hull of $\mathscr{W} \cdot (0,0)$, lies in the $K$-neighborhood of $C$, where $K$ does not depend on the flat we consider nor on $C$. This concludes the proof of our claim. 

\medskip \noindent
Let $\mathscr{C}$ be the graph obtained from $M(X,x)$ by adding a vertex $v_C$ for every $C \in \mathcal{C}$ and an edge between $v_C$ and every vertex of $C$. By combining Claim \ref{claim:flatneighborhood} and Lemma \ref{lem:smallflats} (which applies thanks to Proposition \ref{prop:malnormalcontracting}), we deduce from \cite[Theorem 4.1]{coningoff} that $\mathscr{C}$ is a hyperbolic graph. Moreover, according to Claim \ref{claim:fine}, it follows from \cite[Theorem 5.7]{coningoff} that $\mathscr{C}$ is fine. Notice that, because $G$ acts geometrically on $M(X,x)$ and that every subcomplex $C \in \mathcal{C}$ has a cocompact stabiliser, our graph $\mathscr{C}$ contains only finitely many $G$-orbits of edges. This proves that $G$ is hyperbolic relative to $\mathcal{H}$. 
\end{proof}

\noindent
As a remark, it is worth noticing that the characterisation of relatively hyperbolic right-angled Coxeter groups (proved in \cite{BHSC, coningoff}), and more generally the characterisation of relatively hyperbolic graph products of finite groups (which is a particular case of \cite[Theorem 8.33]{Qm}), follows easily from Theorem \ref{thm:specialrelhyp}. We conclude the section with the following immediate consequence of Theorem \ref{thm:specialrelhyp}:

\begin{cor}
Let $G$ be a virtually special group. Then $G$ is relatively hyperbolic if and only if there exists a finite collection of proper subgroups $\mathcal{H}$ satisfying the following conditions:
\begin{itemize}
	\item each subgroup of $\mathcal{H}$ is convex-cocompact;
	\item $\mathcal{H}$ is an almost malnormal collection;
	\item every non virtually cyclic abelian subgroup of $G$ is contained in a conjugate of some group of $\mathcal{H}$. 
\end{itemize}
If so, $G$ is hyperbolic relative to subgroups commensurable to subgroups of $\mathcal{H}$. 
\end{cor}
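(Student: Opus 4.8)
The plan is to reduce everything to the special case handled by Theorem \ref{thm:specialrelhyp} via a finite-index special subgroup, and then transfer relative hyperbolicity across that inclusion. Fix a finite-index special subgroup $\dot{G} \leq G$; since finite-index subgroups correspond to finite covers of a compact special cube complex, $\dot{G}$ is itself the fundamental group of a compact special cube complex, so Theorem \ref{thm:specialrelhyp} applies to it. Throughout I would invoke the two standard facts about relative hyperbolicity and finite-index subgroups (see \cite{OsinRelativeHyp, DrutuSapirI}): if $G$ is hyperbolic relative to $\mathcal{P}$, then $\dot{G}$ is hyperbolic relative to the collection, finite up to $\dot{G}$-conjugacy, of infinite intersections $P^g \cap \dot{G}$; and conversely, if $\dot{G}$ is hyperbolic relative to a finite collection of proper subgroups, then $G$ is hyperbolic relative to a finite collection of subgroups each commensurable to one of them.

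For the forward implication, assume $G$ is hyperbolic relative to a finite collection $\mathcal{P}$ of proper subgroups, and take $\mathcal{H} = \mathcal{P}$. Almost malnormality of $\mathcal{P}$ and the fact that every non virtually cyclic abelian subgroup of $G$ lies in a conjugate of some member of $\mathcal{P}$ follow directly from \cite[Theorems 1.4 and 4.19]{OsinRelativeHyp}, exactly as in the special case. Convex-cocompactness is a property of the action of $\dot{G}$ on $\widetilde{X}$, so here I would descend: the collection $\dot{\mathcal{P}}$ of infinite intersections $P^g \cap \dot{G}$ is a peripheral structure for $\dot{G}$, and since $\dot{G}$ is special Theorem \ref{thm:specialrelhyp} shows each such subgroup is convex-cocompact; as $P^g \cap \dot{G}$ has finite index in $P^g$, this is precisely what convex-cocompactness of the members of $\mathcal{P}$ means for the virtually special group $G$.

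For the converse, suppose $\mathcal{H}$ satisfies the three conditions. I would form $\dot{\mathcal{H}}$, consisting of the infinite subgroups $H^g \cap \dot{G}$ for $H \in \mathcal{H}$ and $g \in G$, taken up to $\dot{G}$-conjugacy; finiteness follows from $[G : \dot{G}] < \infty$. Each $H^g \cap \dot{G}$ is convex-cocompact (it has finite index in the convex-cocompact $H^g$, and convex-cocompactness passes to finite-index subgroups), $\dot{\mathcal{H}}$ is almost malnormal in $\dot{G}$ because $\mathcal{H}$ is almost malnormal in $G$, and any non virtually cyclic abelian $A \leq \dot{G}$ lies in some $H^g$ by hypothesis, hence in $H^g \cap \dot{G}$, which is therefore conjugate into $\dot{\mathcal{H}}$. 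Theorem \ref{thm:specialrelhyp} then gives that $\dot{G}$ is hyperbolic relative to $\dot{\mathcal{H}}$, and the second standard fact upgrades this to relative hyperbolicity of $G$ with peripheral subgroups commensurable to the members of $\dot{\mathcal{H}}$; since each $H^g \cap \dot{G}$ is commensurable to $H^g$, these peripheral subgroups are commensurable to subgroups of $\mathcal{H}$, yielding the refined ``if so'' conclusion.

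The delicate point I would be most careful with is the passage across the finite-index inclusion $\dot{G} \leq G$: establishing that the peripheral structure both descends to and lifts from $\dot{G}$ with the stated control (finiteness of the collections, the infinite-intersection bookkeeping up to conjugacy, and the finite-index relation between $H^g$ and $H^g \cap \dot{G}$), and reconciling the definition of convex-cocompactness, which is intrinsically tied to the action of the special subgroup $\dot{G}$ on $\widetilde{X}$, with a statement phrased for subgroups of the merely virtually special group $G$.
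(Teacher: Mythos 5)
Your proposal is correct and takes essentially the same route as the paper: the corollary is stated there without proof as an ``immediate consequence'' of Theorem \ref{thm:specialrelhyp}, the intended argument being exactly your passage to a finite-index cocompact special subgroup $\dot{G}$ and the standard transfer of relative hyperbolicity across finite-index inclusions, with the peripheral bookkeeping via infinite intersections $H^g \cap \dot{G}$ up to conjugacy and the resulting commensurability control explaining the ``if so'' clause. The paper itself invokes this same finite-index lifting elsewhere (e.g.\ at the start of the proof of Theorem \ref{thm:RHspecialAbelian}), so your fleshed-out details, including the flagged reconciliation of convex-cocompactness with the action of $\dot{G}$ on $\widetilde{X}$, are precisely what the author leaves implicit.
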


\subsection{Hyperbolicity relative to abelian subgroups}\label{section:RHabelian}

\noindent
It is worth noticing that the criterion provided by Theorem \ref{thm:specialrelhyp} does not provide a purely algebraic characterisation of relatively hyperbolic special groups. Indeed, the subgroups need to be convex-cocompact, but convex-cocompactness is not an algebraic property: with respect to the canonical action $\mathbb{Z}^2 \curvearrowright \mathbb{R}^2$, the cyclic subgroup generated by $(0,1)$ is convex-cocompact, whereas the same subgroup is not convex-cocompact with respect to the action $\mathbb{Z}^2 \curvearrowright \mathbb{R}^2$ defined by $(0,1) : (x,y) \mapsto (x+1,y+1)$ and $(1,0) : (x,y) \mapsto (x+1,y)$. On the other hand, we do not know whether or not a finitely generated malnormal subgroup is automatically convex-cocompact. Nevertheless, Theorem \ref{thm:specialrelhyp} provides an algebraic criterion if one restricts our attention to a collection of subgroups which we know to be convex-cocompact. In this spirit, we conclude the section with the following beautiful characterisation of virtually special groups which are hyperbolic relative to virtually abelian subgroups.

\begin{thm}\label{thm:RHspecialAbelian}
A virtually special group is hyperbolic relative to a finite collection of virtually abelian subgroups if and only if it does not contain $\mathbb{F}_2 \times \mathbb{Z}$ as a subgroup.
\end{thm}

\noindent
The following preliminary lemma will be useful during the proof of our theorem.

\begin{lemma}\label{lem:abelianInRaag}
Let $A$ be a right-angled Artin group and $a_1,p_1, \ldots, a_n,p_n \in A$ some elements. Suppose that $a_1, \ldots, a_n$ are cyclically reduced and that $p_1a_1p_1^{-1}, \ldots, p_na_np_n^{-1}$ pairwise commute. There exists some $q \in A$ such that
$$\langle p_1a_1p_1^{-1}, \ldots, p_na_np_n^{-1} \rangle= q \langle a_1, \ldots, a_n \rangle q^{-1}.$$
\end{lemma}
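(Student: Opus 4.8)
The plan is to prove the statement by induction on the number of vertices of $\Gamma$, using Servatius' description of the centralisers of cyclically reduced elements together with the fact that right-angled Artin groups have unique roots (if $u^k=v^k$ with $k\neq 0$, then $u=v$). After discarding the trivial $a_i$ and reordering so that $a_1\neq 1$, the first move is to conjugate the whole configuration by $p_1^{-1}$: this replaces each $p_ia_ip_i^{-1}$ by $(p_1^{-1}p_i)a_i(p_1^{-1}p_i)^{-1}$ and multiplies the eventual $q$ by $p_1$ on the left, so there is no loss in assuming $p_1=1$, and hence that $a_1$ itself commutes with every $g_i:=p_ia_ip_i^{-1}$, i.e. $g_i\in C(a_1)$ for all $i$.

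Next I would invoke Servatius' centraliser theorem. Writing $a_1=z_1^{m_1}\cdots z_l^{m_l}$ as a product of pure factors with roots $z_1,\dots,z_l$, setting $S=\supp(a_1)$ and letting $S^\perp$ be the set of vertices adjacent to every vertex of $S$, one gets a direct product decomposition $C(a_1)=Z\times A(\Gamma[S^\perp])$, where $Z=\langle z_1,\dots,z_l\rangle\cong\mathbb{Z}^l$ and $A(\Gamma[S^\perp])$ is the parabolic subgroup on the induced subgraph $\Gamma[S^\perp]$. Since $S$ and $S^\perp$ are joined in $\Gamma$, the factor $Z$ commutes with $A(\Gamma[S^\perp])$ and contains $a_1$, and each $g_i$ splits uniquely as $g_i=\zeta_i h_i$ with $\zeta_i\in Z$, $h_i\in A(\Gamma[S^\perp])$. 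Because $S$ and $S^\perp$ commute, the cyclically reduced form of $g_i$ is the product of those of $\zeta_i$ and $h_i$; comparing with $a_i$ and using uniqueness of supports gives $\supp(a_i)\subseteq S\sqcup S^\perp$, and shows that $h_i$ is conjugate, inside $A(\Gamma[S^\perp])$, to the $S^\perp$-part $a_i''$ of $a_i$, which is cyclically reduced in that parabolic.

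The pairwise commutation of the $g_i$ forces the $h_i$ to pairwise commute in $A(\Gamma[S^\perp])$, and since $a_1\neq 1$ this parabolic has strictly fewer vertices; the inductive hypothesis then provides a single $q\in A(\Gamma[S^\perp])$ aligning $\langle h_2,\dots,h_n\rangle$ with $\langle a_2'',\dots,a_n''\rangle$. Conjugating by such a $q$, which commutes with $Z$ and fixes $a_1$, I would reassemble the $Z$-components: here the unique-root property is used to match the $\zeta_i$ with the $S$-parts of the $a_i$ and to pass from powers to the roots $z_j$. The base cases ($\Gamma$ discrete, so a free group, or $a_1$ a single generator) are handled directly from the cyclic/axis description of centralisers in free groups.

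The main obstacle is precisely this reassembly step and the insistence on a \emph{common} conjugator. One cannot in general conjugate each generator individually to the prescribed $g_i$ by one $q$ — this already fails for two differently cyclically permuted powers of $ab$ in a free group — so the argument must operate at the level of subgroups and must genuinely use that the $g_i$ commute in order to force the cyclically reduced representatives $a_i$ to have pairwise-joined supports, equivalently that $\langle a_1,\dots,a_n\rangle$ is abelian and lies in a single standard flat. Controlling the cyclic-permutation ambiguity in the choice of reduced forms while keeping $q$ uniform over all $i$ is the delicate part, and is where Servatius' conjugacy criterion (conjugate cyclically reduced elements differ only by cyclic permutations) and the unique-root property carry the weight.
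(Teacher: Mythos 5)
Your scheme (inducting on the number of vertices of $\Gamma$ through the decomposition $C(a_1)=Z\times A(\Gamma[S^{\perp}])$, projecting and reassembling) is genuinely different from the paper's proof, which inducts on $n$: given a conjugator $q$ for the first $n-1$ elements, it writes $q^{-1}(p_na_np_n^{-1})q=h_1^{n_1}\cdots h_r^{n_r}k$ via Servatius' Centralizer Theorem, cyclically reduces $k=aha^{-1}$, and absorbs $a$ into $q$ using that $a$ commutes with $a_1,\ldots,a_{n-1}$. Your setup is sound up to the point you yourself flag: the reassembly of the $Z$-components. That step is a genuine gap, and it cannot be closed as stated, because the bridging claim you rely on --- that commutation of the $g_i$ forces $\langle a_1,\ldots,a_n\rangle$ to be abelian, with pairwise-joined supports --- is false. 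In the free group with basis $\{a,b\}$ take $a_1=ab$, $p_1=1$, $a_2=ba$, $p_2=a$: both $a_i$ are cyclically reduced, $p_2a_2p_2^{-1}=a(ba)a^{-1}=ab$, so $g_1=g_2=ab$ commute and all hypotheses of Lemma \ref{lem:abelianInRaag} hold; yet $\langle a_1,a_2\rangle=\langle ab,ba\rangle$ is a nonabelian free group, while $\langle g_1,g_2\rangle=\langle ab\rangle$ is cyclic, so no conjugator $q$ can exist. In your language this is exactly the failure of the $Z$-matching: here $S=\{a,b\}$, $S^{\perp}=\emptyset$, $\zeta_2=ab\in Z$, but the $S$-part of $a_2$ is the cyclic shift $ba$, which does not lie in $Z$, and no single conjugator aligns two distinct cyclic permutations. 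Your base case (free groups, handled ``directly from the cyclic description of centralisers'') fails for the same reason.

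What this shows is that the cyclic-permutation ambiguity you correctly identified as the delicate point is not controllable: it is where the lemma itself, read literally, breaks down. Note that the paper's own proof slides over the identical point when it asserts $a_n=h_1^{n_1}\cdots h_r^{n_r}h$ ``by uniqueness of the cyclic reduction'' --- cyclic reduction in a right-angled Artin group is unique only up to cyclic permutation, and the example above instantiates the failure already at $n=2$. The statement that is true, and is all that the application in the proof of Theorem \ref{thm:RHspecialAbelian} requires, allows each $a_i$ to be replaced by a cyclic permutation of itself, i.e.\ produces pairwise commuting cyclically reduced elements $b_i$ with $|b_i|=|a_i|$ and $\langle p_1a_1p_1^{-1},\ldots,p_na_np_n^{-1}\rangle=q\langle b_1,\ldots,b_n\rangle q^{-1}$; with that weakening the paper's induction on $n$ becomes correct, and your parabolic induction could plausibly be completed as well, the unique-root property then matching the $\zeta_i$ with the $S$-parts up to the now-permitted shifts. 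As written, however, the decisive step of your proposal is left open, and it is not merely delicate but impossible, so the proof is incomplete.
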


\noindent
We begin by a discussion written in \cite{ServatiusCent}. Let $g$ be an element in a right-angled Artin group $A(\Gamma)$. Write $g=php^{-1}$ where $h$ is the cyclic reduction of $g$, and let $\Lambda_1, \ldots, \Lambda_r$ denote the connected components of the complement of the \emph{support} of $h$, ie., the subgaph generated by the vertices of $\Gamma$ corresponding to the generators which appear in the word $h$. One can write $h$ as a product $u_1 \cdots u_k$ such that, for every $1 \leq i \leq k$, the support of $u_i$ is precisely $\Lambda_i$; this element $u_i$ belongs to a maximal cyclic subgroup whose generator, say $h_i$, is uniquely defined up to sign. So $g=ph_1^{n_1} \cdots h_r^{n_r}p^{-1}$ for some integers $n_1, \ldots, n_r \in \mathbb{Z}$. The $h_i$'s are the \emph{pure factors} of $g$. According to \cite[Centralizer Theorem]{ServatiusCent}, an element of $A(\Gamma)$ commutes with $g$ if and only if it can be written as $ph_1^{m_1} \cdots h_r^{m_r} k p^{-1}$ for some integers $m_1, \ldots, m_r \in \mathbb{Z}$ and some element $k \in A(\Gamma)$ whose support is disjoint from $\Lambda_1, \ldots, \Lambda_r$ in the complement of $\Gamma$. 

\begin{proof}[Proof of Lemma \ref{lem:abelianInRaag}.]
We argue be induction on $n$. If $n=1$, there is nothing to prove. Suppose that $n \geq 2$. By applying our induction hypothesis, we know that there exists some $q\in A$ such that
$$\langle p_1a_1p_1^{-1}, \ldots, p_{n-1}a_{n-1}p_{n-1}^{-1} \rangle= q \langle a_1, \ldots, a_{n-1} \rangle q^{-1}.$$
Let $h_1, \ldots, h_r$ denote all the pure factors which appear in $a_1, \ldots, a_{n-1}$. The Centraliser Theorem implies that $p_na_np_n^{-1}$ can be written as $q h_1^{n_1} \cdots h_r^{n_r} k q^{-1}$ where the support of $k$ is disjoint from the union of the supports of the $h_i$'s. Write $k=aha^{-1}$ where $h$ is cyclically reduced. Then
$$p_na_np_n^{-1}=(qa) \cdot h_1^{n_1} \cdots h_r^{n_r}h \cdot (qa)^{-1}$$
because $a$ commutes with all the $h_i$'s. Notice that $a_n= h_1^{n_1} \cdots h_r^{n_r}h$ by uniqueness of the cyclic reduction. Moreover, since $a$ commutes with all the pure factors which appear in $a_1, \ldots, a_{n-1}$, which are cyclically reduced, necessarily $a$ commutes with $a_1, \ldots, a_{n-1}$. Consequently,
$$\langle p_1a_1p_1^{-1}, \ldots, p_{n-1}a_{n-1}p_{n-1}^{-1} \rangle= (qa) \langle a_1, \ldots, a_{n-1} \rangle (qa)^{-1}.$$
This concludes the proof of our lemma. 
\end{proof}

\begin{proof}[Proof of Theorem \ref{thm:RHspecialAbelian}.]
Notice that, in order to show that a given group containing a special finite-index subgroup $G$ is hyperbolic relative to a finite collection of virtually abelian subgroups, it is sufficient to prove that $G$ is hyperbolic relative to a finite collection of abelian subgroups. Let $X$ be a compact special cube cube complex whose fundamental group is isomorphic to $G$. Let $\mathcal{A}$ denote the collection of maximal abelian subgroups of rank at least two. Fix a set of representatives $\mathcal{A}_0 \subset \mathcal{A}$ modulo conjugacy.

\begin{claim}\label{claim:abelianmalnormal}
If $G$ does not contain any subgroup isomorphic to $\mathbb{F}_2 \times \mathbb{Z}$, then $\mathcal{A}_0$ is a malnormal collection.
\end{claim}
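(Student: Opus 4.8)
The plan is to reduce the whole claim to a single structural consequence of the hypothesis: \emph{in $G$ the centraliser of every non-trivial element is abelian}. Since $G$ is special it embeds into a right-angled Artin group $A(\Delta)$ by Theorem \ref{thm:embedRAAG}; in particular $G$ is torsion-free, and I may invoke Baudisch's theorem, that two elements of a right-angled Artin group either commute or generate a free group of rank two. Suppose some non-trivial $z \in G$ had a non-abelian centraliser. I would pick $a,b \in C_G(z)$ with $[a,b]\neq 1$; by Baudisch $\langle a,b\rangle \cong \mathbb{F}_2$, while $z$ commutes with both $a$ and $b$ and hence centralises $\langle a,b\rangle$. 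Since $\mathbb{F}_2$ has trivial centre, $\langle a,b\rangle \cap \langle z\rangle = \{1\}$, and as $z$ has infinite order this produces $\langle a,b,z\rangle \cong \mathbb{F}_2 \times \mathbb{Z}$, contrary to hypothesis. Thus $C_G(z)$ is abelian for every $z \neq 1$.

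This immediately yields the uniqueness statement that drives everything: for $z \neq 1$, any abelian subgroup containing $z$ lies inside $C_G(z)$, so $C_G(z)$ is itself the unique maximal abelian subgroup of $G$ containing $z$. In particular, if $A$ is a maximal abelian subgroup and $z \in A \setminus \{1\}$, then $A = C_G(z)$.

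With this in hand the bulk of malnormality is formal. Suppose $A,B \in \mathcal{A}_0$ and $g \in G$ satisfy $A \cap gBg^{-1} \neq \{1\}$, and fix a non-trivial $z$ in this intersection. Both $A$ and $gBg^{-1}$ are maximal abelian subgroups containing $z$, so by uniqueness $A = C_G(z) = gBg^{-1}$. Hence $A$ and $B$ are conjugate; since $\mathcal{A}_0$ is a set of conjugacy representatives, $A = B$, and moreover $g \in N_G(A)$. The one genuinely delicate point left is the self-normalising clause: I must upgrade $g \in N_G(A)$ to $g \in A$. To do this, fix $z \in A \setminus \{1\}$, so that $A = C_G(z)$, and show $[g,z]=1$. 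By Baudisch, $\langle z,g\rangle$ is either abelian or free of rank two. If it is abelian then $g \in C_G(z) = A$ and we are done. If instead $\langle z,g\rangle \cong \mathbb{F}_2$, I rule this out: as $g$ normalises $A$, the element $gzg^{-1}$ lies in the abelian group $A$ and therefore commutes with $z$; but two commuting elements of a free group lie in a common maximal cyclic subgroup $\langle c\rangle$, with $c$ not a proper power, so $z = c^p$ and $gzg^{-1} = c^q$. Since $gzg^{-1}$ is conjugate to $z$, comparing conjugacy classes of powers of the primitive element $c$ forces $p=q$, whence $gzg^{-1}=z$ and $\langle z,g\rangle$ is abelian, a contradiction. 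Hence $g \in A$.

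The main obstacle is precisely this passage from ``the two maximal abelian subgroups coincide'' to the self-normalising requirement of malnormality; everything else is a clean corollary of the abelianity of centralisers, which is the real engine and follows at once from Baudisch's theorem together with the prohibition of $\mathbb{F}_2 \times \mathbb{Z}$.
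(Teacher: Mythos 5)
Your proof is correct and follows essentially the same route as the paper: your preliminary lemma that centralisers of non-trivial elements are abelian is precisely the paper's combination of Theorem \ref{thm:embedRAAG} with Baudisch's dichotomy and the $\mathbb{F}_2 \times \mathbb{Z}$ construction of Lemma \ref{lem:FxZinRAAG}, repackaged as a uniform statement, and your self-normalising step --- Baudisch forces $\langle z,g \rangle$ free while $gzg^{-1} \in A$ commutes with $z$ --- is the paper's second case, with the underlying free-group fact spelled out in more detail than the paper bothers to. The only cosmetic difference is that the paper splits directly into the cases $A_1 \neq A_2^g$ and $A_1 = A_2^g$, whereas you derive the coincidence of the two maximal abelian subgroups from the uniqueness of the maximal abelian subgroup $C_G(z)$ containing a given $z \neq 1$; the content is identical.
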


\noindent
Suppose that $\mathcal{A}_0$ is not malnormal, ie., there exist two subgroups $A_1,A_2 \in \mathcal{A}_0$ and an element $g \in G$, such that either $A_1 \neq A_2$ or $A_1 = A_2$ and $g \notin A_1$, such that $A_1 \cap A_2^g \neq \{ 1 \}$. We distinguish two cases.
\begin{itemize}
	\item Suppose that $A_1 \neq A_2^g$. Since $A_1$ and $A_2^g$ are two distinct maximal abelian subgroups, there must exist two non commuting elements $k_1 \in A_1$ and $k_2 \in A_2^g$. Fix a non-trivial element $h \in A_1 \cap A_2^g$ and notice that $h$ commutes with both $k_1$ and $k_2$. By combining Theorem \ref{thm:embedRAAG} with Lemma \ref{lem:FxZinRAAG} below, we conclude that $G$ contains a subgroup isomorphic to $\mathbb{F}_2 \times \mathbb{Z}$. 
	\item Suppose that $A_1= A_2^g$. This implies that $A_1=A_2$. Let $A$ denote this common subgroup, so that $A=A^g$. Because $g \notin A$ and that $A$ is a maximal abelian subgroup, necessarily there exists some $h \in A$ such that $h$ and $g$ do not commute. By combining Theorem \ref{thm:embedRAAG} and \cite{MR634562}, we know that $\{h,g\}$ is a free basis of $\langle g,h \rangle$. On the other hand, $ghg^{-1} \in A^g=A$ must commute with $h$ since $A$ is abelian, contradiction our previous observation. Therefore, this case is impossible.
\end{itemize}
This concludes the proof of our claim. 

\begin{claim}
If $G$ does not contain any subgroup isomorphic to $\mathbb{F}_2 \times \mathbb{Z}$, then $\mathcal{A}_0$ must be finite.
\end{claim}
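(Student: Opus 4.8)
The plan is to show that every $A\in\mathcal A_0$ contains a copy of $\mathbb Z^2$ generated by \emph{uniformly short} elements, and then to deduce finiteness from the fact that there are only finitely many diagrams of bounded length. First I would record the consequence of the hypothesis that drives everything: if $G$ contains no $\mathbb F_2\times\mathbb Z$, then for every infinite-order $a\in G$ the centraliser $C_G(a)$ is abelian. Indeed, if $x,y\in C_G(a)$ did not commute, then $a$ would commute with the two non-commuting elements $x,y$, and combining Theorem \ref{thm:embedRAAG} with Lemma \ref{lem:FxZinRAAG} (exactly as in Claim \ref{claim:abelianmalnormal}) would produce an $\mathbb F_2\times\mathbb Z$ inside $G$. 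Consequently each $A\in\mathcal A_0$, being maximal abelian and containing an infinite-order element $a$, coincides with $C_G(a)$; in particular $A$ is the unique maximal abelian subgroup through any of its infinite-order elements.

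Next I would manufacture the short generators. Fix $A\in\mathcal A_0$ and infinite-order elements $a,b\in A$ with $\langle a,b\rangle\cong\mathbb Z^2$. By the flat torus theorem, $\langle a,b\rangle$ preserves a combinatorial $2$-flat $\Phi\subset\widetilde X$ on which it acts cocompactly by translations. Applying to $\Phi$ the construction in the proof of Theorem \ref{thm:periodicflat} (distilled as Fact \ref{fact:flatfreeabeliansub}), namely the pigeonhole on the finitely many possible termini, yields two commuting diagrams $g_1,g_2\in G$, each of length at most $\#V(X)$, which translate $\Phi$ along its two directions and generate a $\mathbb Z^2$ acting cocompactly on $\Phi$. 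Since $g_1,g_2$ and $a$ all lie in the stabiliser of $\Phi$ and act on $\Phi\cong\mathbb R^2$ by translations, and since this stabiliser injects into $\mathrm{Isom}(\Phi)$ (the action of $G$ on $\widetilde X$ being free), the elements $g_1$ and $g_2$ commute with $a$. Hence $g_1,g_2\in C_G(a)=A$, so $Z_A:=\langle g_1,g_2\rangle$ is a copy of $\mathbb Z^2$ contained in $A$ and generated by elements of length at most $\#V(X)$.

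Finally I would conclude by counting. Each $Z_A$ is generated by a pair of spherical diagrams of length at most $\#V(X)$, based at one of the finitely many vertices of $X$; over the finite colour set $V(\Delta)\sqcup V(\Delta)^{-1}$ there are only finitely many such pairs, so the subgroups $Z_A$ fall into finitely many $G$-conjugacy classes. It then remains to see that $[A]\mapsto[Z_A]$ is injective on conjugacy classes. If $Z_A=hZ_{A'}h^{-1}$ for some $h\in G$, then $A\cap hA'h^{-1}\supseteq Z_A\neq\{1\}$, and the malnormality of $\mathcal A_0$ (Claim \ref{claim:abelianmalnormal}) forces $A=A'$, hence $[A]=[A']$. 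Therefore $\mathcal A_0$ injects into a finite set and is finite.

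The main obstacle is the middle step: guaranteeing that the short generators extracted from the flat actually lie inside the maximal abelian subgroup $A$. This is where both the flat torus theorem (to obtain an honest combinatorial flat carrying a translation action) and the hypothesis (through the identity $C_G(a)=A$) are essential; without the no-$\mathbb F_2\times\mathbb Z$ assumption the elements $g_1,g_2$ would only be seen to commute with $\langle a,b\rangle$, not to lie in $A$. The bound $\#V(X)$ on their length is precisely the uniform control furnished by the pigeonhole argument of Theorem \ref{thm:periodicflat}, and it is this uniformity that turns the finiteness of short diagrams into the finiteness of $\mathcal A_0$.
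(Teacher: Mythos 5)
Your opening reduction (no $\mathbb{F}_2\times\mathbb{Z}$ forces $C_G(a)$ abelian, hence $A=C_G(a)$ for any nontrivial $a\in A$) and your closing count (short generating pairs fall into finitely many conjugacy classes, and malnormality from Claim \ref{claim:abelianmalnormal} makes $[A]\mapsto[Z_A]$ injective) are both sound. The gap sits exactly where you placed your ``main obstacle,'' and it is twofold. First, ``the flat torus theorem'' produces a $\langle a,b\rangle$-invariant flat for the CAT(0) metric, not a \emph{combinatorial} flat in the sense required by Theorem \ref{thm:periodicflat}: a CAT(0) flat need not be a subcomplex at all (think of a diagonal plane in $\mathbb{R}^3$ with its standard cubing), whereas Fact \ref{fact:flatfreeabeliansub} needs a combinatorial isometric embedding of the squared orthant so that the colour bookkeeping of Facts \ref{fact:hyptransverse} and \ref{fact:coordinates} applies. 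Upgrading to an \emph{invariant} combinatorial flat is plausible but requires real work — convex-cocompactness of abelian subgroups \cite{WoodhouseFlatTorus}, rank rigidity to split the cocompact core as a product of quasi-lines, combinatorial axes in each factor, and a passage to finite index — none of which you carry out.

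Second, and fatally: even granting such a $\Phi$, the elements $g_1,g_2$ extracted by the pigeonhole of Theorem \ref{thm:periodicflat} are merely deck transformations carrying one vertex of $\Phi$ to another vertex of $\Phi$; nothing in that construction shows $g_i\Phi=\Phi$, and the paper's proof never claims or needs such invariance. So you are not entitled to regard $g_1,g_2$ as elements of $\mathrm{Stab}(\Phi)$, the (correct) injectivity of $\mathrm{Stab}(\Phi)\to\mathrm{Isom}(\Phi)$ has nothing to act on, and the crucial membership $g_1,g_2\in C_G(a)=A$ — the linchpin giving $Z_A\leq A$ — is unproven; you assert it rather than prove it. The paper's own proof avoids flats entirely and is worth comparing: it takes a basis $p_1a_1p_1^{-1},\ldots,p_na_np_n^{-1}$ of $A$ with the $a_i$ cyclically reduced, shortens any $a_i$ of length greater than $\#V(X)$ by writing $a_1=abc$ with $b$ a shorter spherical diagram (pigeonhole on the vertices visited by a representing path) and invoking the malnormality of $\mathcal{A}_0$ to replace the generator, and then uses Servatius' Centralizer Theorem \cite{ServatiusCent} through Lemma \ref{lem:abelianInRaag} (combined with Fact \ref{fact:diagvsRAAG}) to align all conjugators to a single $q$, concluding that every $A\in\mathcal{A}$ is conjugate, by an element of length at most $\#V(X)$, to one of finitely many subgroups generated by short pairwise-commuting spherical diagrams. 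It is this word-level control via the RAAG embedding that substitutes for the invariance you are missing; to repair your route you would need an analogous word-level reason why $g_1,g_2$ commute with $a$, which the flat by itself does not supply.
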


\noindent
Let $A \in \mathcal{A}$ be an abelian subgroup. Fix a basis $p_1a_1p_1^{-1}, \ldots, p_na_np_n^{-1} \in A$ where the spherical diagrams $a_1, \ldots, a_n$ (which do not necessarily belong to $\mathcal{D}(X,x)$) are cyclically reduced, and suppose that there exists some $1 \leq i \leq n$ such that the length of $a_i$ is greater than $\#V(X)$. Up to reordering the generators, we may suppose without loss of generality that $i=1$. By considering a path in $X$ representing the diagram $a_1$, which must pass twice through some vertex of $X$ since it has length greater than $\# V(X)$, we deduce that we can write $a_1=abc$ for some diagrams $a,b,c$ such that $b$ is a spherical diagram whose length is less than the length of $a_1$. Notice that, because $b$ is spherical, $aba^{-1}$ is a well-defined diagram. Since $\mathcal{A}_0$ is a malnormal collection according the previous claim, we deduce from the inclusion
$$\left\langle p_2a_2p_2^{-1}, \ldots, p_na_np_n^{-1} \right\rangle \subset \left\langle (pa)b(pa)^{-1}, p_2a_2p_2^{-1}, \ldots, p_na_np_n^{-1} \right\rangle \cap A$$
that 
$$A = \left\langle p_1a_1p_1^{-1}, \ldots, p_na_np_n^{-1} \right\rangle =  \left\langle (pa)b(pa)^{-1}, p_2a_2p_2^{-1}, \ldots, p_na_np_n^{-1} \right\rangle$$
where $|b|< |a_1|$. Consequently, if we choose our basis $p_1a_1p_1^{-1}, \ldots, p_na_np_n^{-1} \in A$ by minimising the sum $|a_1| + \cdots + |a_n|$, then $|a_i| \leq \# V(X)$ for every $1 \leq i \leq n$. By applying Lemma \ref{lem:abelianInRaag} (combined with Fact \ref{fact:diagvsRAAG}), we conclude that any subgroup of $\mathcal{A}$ can be written as $qFq^{-1}$ where $F$ belongs to 
$$\mathcal{F}= \left\{ \langle c_1, \ldots, c_k \rangle \left| k \geq 2, y \in X, \begin{array}{l} c_1, \ldots, c_k \in \mathcal{D}(X,y) \ \text{pairwise commute} \\ \text{and have length $<\# V(X)$} \end{array} \right. \right\}.$$
Let $t$ denote the terminus of $q$. By considering a path of minimal length in $X$ from $x$ to $t$, we find an $(x,t)$-diagram $s$ of length at most $\#V(X)$. Notice that $sq^{-1}$ belongs to $\mathcal{D}(X,x)$, and that conjugating $qFq^{-1}$ by $sq^{-1}$ gives $sFs^{-1}$. Thus, we have proved that any subgroup of $\mathcal{A}$ is conjugate to some $sFs^{-1}$ where $F \in \mathcal{F}$ and where $|s| \leq \# V(X)$. But $\mathcal{F}$ is finite and we have only finitely many choices for $s$. Consequently, $\mathcal{A}$ contains only finitely many conjugacy classes, which shows that $\mathcal{A}_0$ is indeed finite. 

\medskip \noindent
Finally, we are ready to apply Theorem \ref{thm:specialrelhyp}. Suppose that $\mathbb{F}_2 \times \mathbb{Z}$ is not a subgroup of $G$. We know from the two previous claims that $\mathcal{A}_0$ is finite malnormal collection. Moreover, it is clear that any non cyclic abelian subgroup of $G$ must be included in a conjugate of some subgroup of $\mathcal{A}_0$, and it is a general fact that abelian subgroups are convex-cocompact in groups acting geometrically on CAT(0) cube complexes, see \cite{WoodhouseFlatTorus} (although an easy direct proof is possible here). So Theorem \ref{thm:specialrelhyp} applies, so that $G$ must be hyperbolic relative to $\mathcal{A}_0$. Conversely, a group containing $\mathbb{F}_2 \times \mathbb{Z}$ as a subgroup cannot be hyperbolic relative to abelian subgroups since a subgroup which splits as a direct product of two infinite groups must be included in a peripheral subgroup (see for instance \cite[Theorem 4.19]{OsinRelativeHyp}). 
\end{proof}

\begin{lemma}\label{lem:FxZinRAAG}
Let $A$ be a right-angled Artin group and $g,h,k \in A$ be three non-trivial elements satisfying $[g,h]=[g,k]=1$ and $[h,k] \neq 1$. The subgroup $\langle g,h,k \rangle$ is isomorphic to $\mathbb{F}_2 \times \mathbb{Z}$. 
\end{lemma}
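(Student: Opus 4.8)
The plan is to exhibit $\langle g,h,k\rangle$ as the internal direct product of the infinite cyclic group $\langle g\rangle$ and the free group $\langle h,k\rangle$. First I would pin down the two factors. Since right-angled Artin groups are torsion-free and $g\neq 1$, the subgroup $\langle g\rangle$ is infinite cyclic. For the other factor, I would invoke Baudisch's theorem \cite{MR634562}, which asserts that a two-generated subgroup of a right-angled Artin group is either free or free abelian; this is exactly how \cite{MR634562} is used earlier in the proof of Claim \ref{claim:abelianmalnormal}. Since $[h,k]\neq 1$, the subgroup $\langle h,k\rangle$ is not abelian, hence free, and being generated by two elements and non-abelian it is free of rank exactly two, i.e. $\langle h,k\rangle\cong\mathbb{F}_2$.

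Next I would observe that $g$ is central in $\langle g,h,k\rangle$: since $g$ commutes with both generators $h$ and $k$, it commutes with every word in $h,k$, so $\langle g\rangle$ centralises $\langle h,k\rangle$. In particular $\langle h,k\rangle$ is preserved by conjugation by each of the three generators, hence is normal in $\langle g,h,k\rangle$, while $\langle g\rangle$, being central, is normal as well. It then remains only to check that the two factors intersect trivially.

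The one genuinely delicate point---and the step I expect to be the crux---is the triviality of $\langle g\rangle\cap\langle h,k\rangle$. Suppose $g^n\in\langle h,k\rangle$ for some $n\neq 0$. Since $g$ commutes with $h$ and with $k$, so does $g^n$, which therefore lies in the centre of $\langle h,k\rangle\cong\mathbb{F}_2$. But $Z(\mathbb{F}_2)=\{1\}$, forcing $g^n=1$; as $g\neq 1$ and right-angled Artin groups are torsion-free, this contradicts $n\neq 0$. Hence $\langle g\rangle\cap\langle h,k\rangle=\{1\}$.

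With these pieces in hand I would conclude by the standard recognition criterion for internal direct products: $\langle g\rangle$ and $\langle h,k\rangle$ are normal subgroups of $\langle g,h,k\rangle$ with trivial intersection whose product is the whole group, since the generators $g,h,k$ all lie in $\langle g\rangle\cdot\langle h,k\rangle$. Therefore $\langle g,h,k\rangle\cong\langle g\rangle\times\langle h,k\rangle\cong\mathbb{Z}\times\mathbb{F}_2\cong\mathbb{F}_2\times\mathbb{Z}$, as desired.
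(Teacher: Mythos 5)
Your proof is correct and follows essentially the same route as the paper: both identify $\langle h,k\rangle$ as non-abelian free via \cite{MR634562}, note that $g$ is central so both factors are normal, and use triviality of the centre of a non-abelian free group to get $\langle g\rangle\cap\langle h,k\rangle=\{1\}$, concluding by the internal direct product criterion. Your version merely spells out a couple of points the paper leaves implicit (torsion-freeness to see $\langle g\rangle\cong\mathbb{Z}$, and that a two-generated non-abelian free group has rank exactly two).
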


\begin{proof}
Since $g$ is clearly central in $\langle g,h,k \rangle$, necessarily $\langle g \rangle$ and $\langle h,k \rangle$ are normal subgroups in $\langle g,h,k \rangle$. Moreover, because $\langle h,k \rangle$ is a non abelian free subgroup of $A$ according to \cite{MR634562}, we know that the center of $\langle h,k \rangle$ is trivial, so that $\langle g \rangle \cap \langle h,k \rangle = \{1 \}$. A fortiori, $\langle g,h,k \rangle$ decomposes as $\langle g \rangle \times \langle h,k \rangle$. As we already observed that $\langle h,k \rangle$ is a free group, the desired conclusion follows. 
\end{proof}

\subsection{A note on elementary equivalence}\label{section:elementaryequivalence}

\noindent
Our last subsection is dedicated to the following consequence of Theorem \ref{thm:RHspecialAbelian}. We recall that two groups are \emph{elementary equivalent} if they satisfy the same first-order sequences. See \cite{SurveyLogic} and references therein for some background on the study of groups from the point of view of logic.

\begin{thm}\label{thm:RHlogic}
Let $G$ be a special group. Then $G$ is hyperbolic relative to abelian subgroups if and only if it satisfies
$$\forall a,b,c \in G \ \left( a = 1 \vee [a,b] \neq 1 \vee [a,c] \neq 1 \vee [b,c] = 1 \right).$$
Consequently, among cocompact special groups, being hyperbolic relative to abelian subgroups is preserved under elementary equivalence.
\end{thm}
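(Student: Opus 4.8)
The plan is to prove that, for a special group, the displayed first-order sentence is logically equivalent to the non-existence of an $\mathbb{F}_2 \times \mathbb{Z}$ subgroup, and then to read off both assertions from Theorem \ref{thm:RHspecialAbelian}. Write $\sigma$ for the sentence
$$\forall a,b,c \ \left( a=1 \vee [a,b] \neq 1 \vee [a,c] \neq 1 \vee [b,c]=1 \right).$$
Passing to the contrapositive of the matrix, $\sigma$ \emph{fails} in $G$ precisely when there exist $a,b,c \in G$ with $a \neq 1$, $[a,b]=[a,c]=1$ and $[b,c] \neq 1$; that is, $\sigma$ asserts that no non-trivial element of $G$ centralises a pair of non-commuting elements.

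First I would dispose of the easy implication: if $G$ contains a copy of $\mathbb{F}_2 \times \mathbb{Z}$, take $a$ to be a generator of the central $\mathbb{Z}$ factor and $b,c$ to be a free basis of the $\mathbb{F}_2$ factor. Then $a \neq 1$, the element $a$ commutes with both $b$ and $c$, while $[b,c] \neq 1$, so $\sigma$ fails. For the converse, suppose $\sigma$ fails and fix witnesses $a,b,c$ as above. By Theorem \ref{thm:embedRAAG} the group $G$ embeds into a right-angled Artin group $A(\Delta)$; let $g,h,k$ denote the images of $a,b,c$. Since the embedding is an injective homomorphism, it preserves both commutators and their (non-)triviality, so $g \neq 1$ (because $a \neq 1$), $[g,h]=[g,k]=1$, and $[h,k] \neq 1$ (because $[b,c] \neq 1$), whence also $h,k \neq 1$. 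Lemma \ref{lem:FxZinRAAG} then yields $\langle g,h,k \rangle \cong \mathbb{F}_2 \times \mathbb{Z}$, and as this is the isomorphic image of $\langle a,b,c \rangle$, the group $G$ contains $\mathbb{F}_2 \times \mathbb{Z}$. This proves that, for special $G$, the sentence $\sigma$ holds if and only if $G$ has no subgroup isomorphic to $\mathbb{F}_2 \times \mathbb{Z}$.

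The first assertion of the theorem is now immediate from Theorem \ref{thm:RHspecialAbelian}: a special group is hyperbolic relatively to a collection of abelian subgroups exactly when it avoids $\mathbb{F}_2 \times \mathbb{Z}$, hence exactly when it satisfies $\sigma$. Here I would use that the proof of Theorem \ref{thm:RHspecialAbelian} produces genuinely abelian peripheral subgroups (the maximal abelian subgroups of rank at least two), so for a special group the conclusion is relative hyperbolicity with respect to abelian, not merely virtually abelian, subgroups; the reverse implication only needs that abelian subgroups are in particular virtually abelian.

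For the final clause, the key observation is that $\sigma$ is a single sentence in the first-order language of groups (indeed a universal sentence, which matches the remark in the introduction that the property depends only on the universal theory). Elementarily equivalent cocompact special groups satisfy exactly the same first-order sentences, so one satisfies $\sigma$ if and only if the other does; by the equivalence established above, being hyperbolic relatively to abelian subgroups is therefore preserved under elementary equivalence among such groups. I do not expect a genuine obstacle here, since the argument is essentially a logical translation of the already-proved Theorem \ref{thm:RHspecialAbelian}; the only point requiring care is the verification of the non-triviality hypotheses of Lemma \ref{lem:FxZinRAAG}, which is handled by the injectivity of the embedding into $A(\Delta)$.
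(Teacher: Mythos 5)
Your proof is correct and follows essentially the same route as the paper: both directions reduce the sentence to the presence or absence of an $\mathbb{F}_2 \times \mathbb{Z}$ subgroup and then invoke Theorem \ref{thm:RHspecialAbelian}, the only cosmetic difference being that you cite Lemma \ref{lem:FxZinRAAG} through the embedding of Theorem \ref{thm:embedRAAG} where the paper repeats that lemma's argument inline (via \cite{MR634562}). Your extra remark that the peripheral subgroups produced for a special group are genuinely abelian matches the first paragraph of the paper's proof of Theorem \ref{thm:RHspecialAbelian}, so no gap remains.
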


\begin{proof}
If $G$ is not hyperbolic relative to abelian subgroups, it follows from Theorem~\ref{thm:RHspecialAbelian} that $G$ contains a subgroup isomorphic to $\mathbb{F}_2 \times \mathbb{Z}$. Let $a \in G$ be a generator of the cyclic factor and $b,c \in G$ two non-commuting elements of the free factor. Then $a \neq 1$, $a$ commutes with both $b$ and $c$, and $b$ and $c$ do not commute. In other words, $G$ satisfies
$$\exists a,b,c \in G \ \left( a \neq 1 \wedge [a,b]=[a,c]=1 \wedge [b,c] \neq 1 \right).$$
Conversely, suppose that $G$ satisfies the previous statement, and let $a,b,c \in G$ be the corresponding elements. Since $b$ and $c$ do not commute, it follows from \cite{MR634562} that the subgroup $\langle b,c \rangle$ is a free non-abelian subgroup. Next, notice that, because $a$ commutes with every element of $\langle b,c \rangle$ and since $\langle b,c \rangle$ is centerless, necessarily $\langle a \rangle \cap \langle b,c \rangle = \{ 1 \}$. Consequently, since $\langle a \rangle$ and $\langle b,c \rangle$ are clearly normal subgroups of $\langle a,b,c \rangle$, we conclude that
$$\langle a,b,c \rangle = \langle a \rangle \times \langle b,c \rangle \simeq \mathbb{Z} \times \mathbb{F}_2.$$
Therefore, $G$ is not hyperbolic relative to abelian subgroups. 
\end{proof}

\noindent
One possible application of Theorem \ref{thm:RHlogic} would be to reprove that \emph{limit groups} are hyperbolic relative to abelian subgroups (see for intance \cite{DahmaniCombination, alibegovic} for a proof). Indeed, we know from \cite{LimitGroups} that limit groups have the same universal theory as non-abelian free groups, so the first-order sentence of Theorem \ref{thm:RHlogic} must be satisfied by any limit group (as well as its subgroups). On the other hand, it is proved in \cite{BigWise} that limit groups are virtually special, so the desired conclusion follows. However, we emphasize that the latter result uses the fact that limit groups are hyperbolic relative to abelian subgroups, so actually we do not get an alternative proof of the relative hyperbolicity. We only know how to deduce this property from being virtually special.

\addcontentsline{toc}{section}{References}

\bibliographystyle{alpha}
{\footnotesize\bibliography{SpecialAndGraphBraidGroup}}

\end{document}